\documentclass[12pt]{amsart}   	% use "amsart" instead of "article" for AMSLaTeX format
\usepackage{geometry}                		% See geometry.pdf to learn the layout options. There are lots.
\usepackage{graphicx}				% Use pdf, png, jpg, or epsÂ§ with pdflatex; use eps in DVI mode
\usepackage{amssymb}
\usepackage{mathtools}
\usepackage[new]{old-arrows}
\usepackage{stmaryrd}

\usepackage[shortlabels]{enumitem}

\usepackage{amsthm,amssymb,amsmath,amsfonts,extarrows,bm}
\usepackage[all,cmtip,2cell]{xy}

\usepackage[colorlinks=true,backref=page]{hyperref}
\usepackage[noabbrev,nameinlink]{cleveref}

\usepackage{xfrac,faktor}
\usepackage{multirow}

\usepackage[dvipsnames]{xcolor}

\usepackage{comment}
\usepackage{tikz-cd}
\usepackage[textsize=tiny]{todonotes}

\renewcommand*{\backref}[1]{}
\renewcommand*{\backrefalt}[4]{%
    \ifcase #1 %
        (Not cited.)%
    \or
        $\uparrow$ #2%
    \else
        $\uparrow$ #2%
    \fi
}

\newtheorem{theorem}{Theorem}[section]
\newtheorem{lemma}[theorem]{Lemma}

\newtheorem{conjecture}[theorem]{Conjecture}
\newtheorem{conj}[theorem]{Conjecture}
\newtheorem{prop}[theorem]{Proposition}

\theoremstyle{definition}
\newtheorem{defn}[theorem]{Definition}

\newtheorem{remark}[theorem]{Remark}

\numberwithin{equation}{section}

\newcommand{\overbar}[1]{\mkern 1.5mu\overline{\mkern-1.5mu#1\mkern-1.5mu}\mkern 1.5mu}
\newcommand{\AAA}{\mathbb{A}}
\newcommand{\CC}{\mathbb{C}}

\newcommand{\QQ}{\mathbb Q}
\newcommand{\Qbar}{\overline{\mathbb Q}}

\newcommand{\RR}{\mathbb{R}}

\newcommand{\ZZ}{\mathbb{Z}}
\newcommand{\bA}{\mathbf{A}}

\newcommand{\calA}{\mathcal{A}}
\newcommand{\calB}{\mathcal{B}}

\newcommand{\calD}{\mathcal{D}}

\newcommand{\calF}{\mathcal{F}}
\newcommand{\calG}{\mathcal{G}}
\newcommand{\calH}{\mathcal{H}}

\newcommand{\calL}{\mathcal{L}}
\newcommand{\calM}{\mathcal{M}}

\newcommand{\calO}{\mathcal{O}}
\newcommand{\calP}{\mathcal{P}}

\newcommand{\calS}{\mathcal{S}}
\newcommand{\calU}{\mathcal{U}}
\newcommand{\calV}{\mathcal{V}}
\newcommand{\calW}{\mathcal{W}}
\newcommand{\calX}{\mathcal{X}}
\newcommand{\calY}{\mathcal{Y}}
\newcommand{\calZ}{\mathcal{Z}}
\newcommand{\gothA}{\mathfrak{A}}

\newcommand{\gothb}{\mathfrak{b}}

\newcommand{\gothF}{\mathfrak{F}}
\newcommand{\gothH}{\mathfrak{H}}

\newcommand{\dual}{\vee}

\newcommand{\ws}{\mathrm{ws}}

\DeclareMathOperator{\an}{an}

\DeclareMathOperator{\biZar}{biZar}

\DeclareMathOperator{\codim}{codim}
\DeclareMathOperator{\covol}{covol}

\DeclareMathOperator{\End}{End}

\DeclareMathOperator{\image}{im}
\DeclareMathOperator{\id}{id}

\DeclareMathOperator{\Lie}{Lie}

\DeclareMathOperator{\Mat}{Mat}

\DeclareMathOperator{\ord}{ord}

\DeclareMathOperator{\pr}{pr}

\DeclareMathOperator{\rdim}{rel.dim}

\DeclareMathOperator{\Sp}{Sp}

\DeclareMathOperator{\spec}{sp}

\DeclareMathOperator{\Sym}{Sym}

\DeclareMathOperator{\Tr}{Tr}

\begin{document}

\title[Degeneracy loci and unlikely intersections in abelian schemes]{Degeneracy loci and unlikely intersections \\ in abelian schemes}

\author{Fabrizio Barroero}
\address[Fabrizio Barroero]{Department of Mathematics and Physics, Roma Tre University \\
Largo San Leonardo Murialdo 1, 00146 Rome, Italy}
\email{fabrizio.barroero@uniroma3.it}

\author{Laura Capuano}
\address[Laura Capuano]{Department of Mathematics and Physics, Roma Tre University \\
Largo San Leonardo Murialdo 1, 00146 Rome, Italy}
\email{laura.capuano@uniroma3.it}

\author{Tangli Ge}
\address[Tangli Ge]{Department of Mathematics and Statistics, Virginia Commonwealth University, 1015 Floyd Avenue Richmond, Virginia 23284, US}
\email{get2@vcu.edu}

\author{Francesco Tropeano}
\address[Francesco Tropeano]{Institute for Theoretical Sciences, Westlake University, No. 600 Dunyu Road, Sandun town, Xihu district, 310030 Hangzhou, Zhejiang Province, China}
\email{francesco.tropeano@westlake.edu.cn}

\date{\today}
\keywords{Unlikely Intersections, Pink's conjecture, abelian schemes, heights}
\subjclass{14K05, 14K12, 11G50}
%  algebraic theory of abelian varieties, subvarieties of abelian vairieties, heights 
%abelian varieties of dimension >1 (11G05), don't know if you want to add it

%\thanks{1}

\begin{abstract}
Let $\calA\to S$ be an abelian scheme over a normal quasi-projective
variety over a number field, and let $\calX\subseteq\calA$ be a
positive-dimensional subvariety. For every integer $t>0$, we prove
that, outside the $t$-degeneracy locus of $\calX$, at most finitely many points lie in flat group subschemes of relative dimension less
than $t$. In particular, this intersection is not Zariski dense whenever $\calX$ is $t$-nondegenerate. The proof combines a bounded height theorem with uniform large Galois orbit estimates, o-minimal point counting, and mixed Ax--Schanuel, following the Pila--Zannier strategy. The arithmetic input is an explicit bound for the complexity of endomorphism relations, obtained from geometry of numbers and height estimates for abelian varieties. As applications, we formulate a relative Mordell--Lang conjecture, prove it for section images in powers of certain abelian schemes, and establish injectivity of specialization of the group of sections outside a proper closed subset under variation and dimension
hypotheses.
\end{abstract}   

\maketitle

\section{Introduction}

The Zilber--Pink conjecture predicts that intersections of a subvariety with special subvarieties of sufficiently large codimension are not Zariski dense, unless the geometry forces otherwise. Originating in work of Bombieri, Masser and Zannier~\cite{BMZ_99} and Zilber~\cite{Zilber_exp} for tori, and formulated by Pink~\cite{Pink_comb_unpublished} in the setting of mixed Shimura varieties, it provides a common framework for the Manin--Mumford, Mordell--Lang and Andr\'e--Oort problems (see the books of Pila~\cite{Pila_book} and Zannier~\cite{Zannier_book} for further background on the subject).

In this paper, we study a conjecture formulated by Pink in the context of abelian schemes. Let $S$ be a normal quasi-projective variety and let $\pi:\calA\to S$ be an abelian scheme of relative dimension $g$, both defined over a field $K\subseteq\CC$. For an integer $t$, denote by $\calA_{(<t)}$ the union of all algebraic subgroups of dimension less than $t$ in the geometric fibers of $\calA/S$. In this setting, the prediction of \cite[Conj.~6.1]{Pink_comb_unpublished} takes the following form.

\begin{conj}[Pink]\label{conj:Pink}
Let $\calX\subseteq\calA$ be a subvariety of dimension $d$ dominating $S$, with geometrically irreducible generic fiber. If $\calX$ is not contained in any strict group subscheme of $\calA$, then $\calX\cap\calA_{(<g-d)}$ is not Zariski dense in $\calX$.
\end{conj}

% Over a number field, we prove a finiteness result for intersections with flat group subschemes, valid for subvarieties of arbitrary dimension: outside Gao's $t$-degeneracy locus, only finitely many points lie in flat group subschemes of relative dimension less than $t$.

Although \Cref{conj:Pink} remains open in general, several cases are known. When $d=g-1$, the intersection concerns fiberwise torsion points. This case follows from the \emph{relative Manin--Mumford conjecture}, initially investigated in a series of papers by Masser and Zannier~\cite{MasserZannier_torsiononproductsofEC,MasserZannier_torsionEC1,MasserZannier_torsionEC2,MasserZannier_Torsionpoints,MasserZannier_torsion20} and proved in full generality by Gao and Habegger~\cite{GH_RMM}. When $S$ and $\calX$ are curves and $\calA$ is a fibered product of elliptic schemes, the conjecture follows from work of the first- and second-named authors \cite{BC_PowersofEC,BC_productsofEC}, the first-named author \cite{B_powers}, and the first-named author and Dill \cite{BD_distinguished}. 

For general abelian schemes over a curve, defined over a number field, work of Habegger--Pila \cite{HP_ominimality} and the first- and second-named authors \cite{BC_CurvesinAbelianScheme} establishes the corresponding result for intersections of curves with flat group subschemes of relative codimension at least $2$; see also \cite{Capuano_survey} for a survey. Our result extends this study to higher-dimensional subvarieties and bases.

From now on, assume that $K$ is a number field. For an integer $t$, write
\[
\calA_{<t}:=\bigcup_{\rdim\calH<t}\calH,\quad \calA_{\le t}:=\bigcup_{\rdim\calH\le t}\calH
\]
where both unions run over all flat group subschemes $\calH\subseteq\calA$. In particular, $\calA_{<t}\subseteq\calA_{(<t)}$.

The exceptional set in our theorem is the $t$-\emph{degeneracy locus}, introduced by Gao \cite{Gao_Betti} to study the rank of the Betti map and also used by Gao and Habegger in their proof of the relative Manin--Mumford conjecture. For an irreducible closed subvariety $\calY\subseteq\calA$, let $\calY^{\textnormal{ws}}$ denote its relative weakly special closure, as defined in \S3.2. Its definition uses abelian subvarieties, torsion, and the constant part over the generic point of $\pi(\calY)$. Define the vertical defect by
\[
\delta_v(\calY):=\rdim\calY^{\textnormal{ws}}-\dim\calY.
\]
For an irreducible closed subvariety $\calX\subseteq\calA$ and an integer $t$, put
\[
\calX^{\textnormal{deg}}(t):= \bigcup_{\substack{\calY\subseteq\calX\ \mathrm{irreducible}\\
                   \dim\calY>0,\ \delta_v(\calY)<t}}\calY.
\]
This union is Zariski closed; see \cite{Gao_Betti,Ge_IntersectI}. We say that $\calX$ is \emph{$t$-nondegenerate} if $\calX^{\textnormal{deg}}(t)\ne\calX$. Our main result is the following.

\begin{theorem}[\Cref{thm:main}]\label{thm:mainIntro}
Let $S$ be a normal quasi-projective variety and let $\calA/S$ be an abelian scheme, both defined over a number field $K$. Let $\calX\subseteq\calA$ be a positive-dimensional subvariety defined over $K$. For every integer $t>0$, the intersection
\[
\bigl(\calX\setminus\calX^{\textnormal{deg}}(t)\bigr) \cap\calA_{<t}
\]
is finite. In particular, if $\calX$ is $t$-nondegenerate, then $\calX\cap\calA_{<t}$ is not Zariski dense in $\calX$.
\end{theorem}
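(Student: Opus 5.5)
The plan follows the Pila--Zannier strategy sketched in the abstract. I argue by contradiction on a point $P\in(\calX\setminus\calX^{\textnormal{deg}}(t))\cap\calA_{<t}$ of large height. Since $\calX$ and $\calA$ are defined over $K$ and $\calA_{<t}$ is defined over $\Qbar$, I will show that these points have bounded height and bounded degree over $K$. Northcott then gives finiteness. More precisely, I will show the following.
\begin{enumerate}[(i)]
\item Such points have height bounded in terms of $\calX$ and $t$, via a bounded height theorem.
\item If $P$ lies in a flat group subscheme $\calH$ of relative dimension $<t$, then $\calH$ can be chosen of bounded complexity.
\item Large Galois orbits combined with o-minimal counting force $[K(P):K]$ to be bounded.
\end{enumerate}
The "in particular" statement follows at once: if $\calX^{\textnormal{deg}}(t)\neq\calX$, then $\calX\cap\calA_{<t}$ is contained in the proper closed set $\calX^{\textnormal{deg}}(t)$ together with finitely many points.

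\emph{Step 1: bounded height.} For $P$ outside the degeneracy locus, $\calX$ is nondegenerate near $P$, so the Betti map has the expected rank there. I would invoke the height inequality of Dimitrov--Gao--Habegger type, in the form due to Gao and Ge on the $t$-nondegenerate open set. It gives $\hat h(P)\ge c_1 h(\pi(P))-c_2$ on a Zariski open set, and a Noetherian induction handles the complement. Separately, $P$ lies in a fiber $\calA_s$ with $s=\pi(P)$, inside an abelian subgroup of dimension $<t$ that extends to a flat $\calH$. I would then use the variation of the Betti map together with $\dim$ counting (relative dimension of $\calH$ less than $t\le$ the defect) to bound $h(s)$, which in turn bounds $\hat h(P)$. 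This is the bounded height theorem, generalizing Habegger's result for curves.

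\emph{Step 2: complexity of relations.} Fix $P$ in some $\calH$. After passing to a finite étale cover of $S$, I describe $\calH$ by endomorphism relations $\phi_1,\dots,\phi_{g-\rdim\calH}\in\End(\calA_{\bar\eta})$, up to torsion. I must bound the degrees (norms) of the $\phi_i$, and the order of the torsion, polynomially in $[K(P):K]$ and $h(s)$. I would apply geometry of numbers (Minkowski's second theorem) to the lattice of relations satisfied by $P$ in $\End(\calA_s)\otimes\ZZ$, using the Néron--Tate pairing as quadratic form. Masser--Wüstholz-type height and period estimates, which are polynomial in the Faltings height and hence bounded by Step 1, would control the covolume. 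This step is the one labelled "the arithmetic input" and I expect it to be the main obstacle. The difficulty is doing it uniformly in $s$, since $\End(\calA_s)$ can jump: I must produce relations coming from the generic endomorphism ring, with bounds polynomial in the degree.

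\emph{Step 3: Galois orbits and counting.} All Galois conjugates of $P$ satisfy relations of the same complexity $\le T:=c[K(P):K]^{\kappa}$. In the uniformization $\tilde\calX\subseteq S^{\an}\times\CC^g$ with Betti coordinates $b(P)\in\RR^{2g}$, these conjugates give $\gg[K(P):K]$ points of the definable set
\[
Z=\{(x,\phi)\colon x\in\calF\cap\tilde\calX,\ \phi\ \text{real relation matrix with } \phi\cdot b(x)\in\QQ\text{-translate condition}\}
\]
with rational parameters of height $\le T$. Here $\calF$ is a definable fundamental set in $\RR_{\an,\exp}$ (Peterzil--Starchenko). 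By Pila--Wilkie, if $[K(P):K]$ is large, $Z$ contains a positive-dimensional semialgebraic block through the corresponding point. By mixed Ax--Schanuel (Gao), this block produces a positive-dimensional $\calY\subseteq\calX$ through $P$ contained in a translate of a weakly special subvariety with $\delta_v(\calY)<t$. That would place $P$ in $\calX^{\textnormal{deg}}(t)$, which is a contradiction. Hence $[K(P):K]$ is bounded, and together with Step 1 and Northcott this gives finiteness.
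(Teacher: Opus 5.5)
Your architecture is the paper's: bounded height, a polynomial bound on the complexity of relations in terms of the degree, Habegger--Pila counting on a definable fundamental set, and mixed Ax--Schanuel. Finishing with Northcott after bounding $[K(P):K]$ is a legitimate variant of the paper's finish, which bounds $\Delta_\Gamma(x)$ and so reduces to finitely many $[n]^{-1}\calB$, each meeting $\calX\setminus\calX^{\textnormal{deg}}(t)$ only in isolated points. For your variant you must first observe that the points are algebraic because they are isolated in $\calX\cap\calH$.

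Step~1, however, does not work as you describe it. A Dimitrov--Gao--Habegger inequality $\hat h(P)\ge c_1h(\pi(P))-c_2$ is a \emph{lower} bound for $\hat h(P)$. That is how torsion points are handled when $t=1$. For $t\ge 2$, points of $\calA_{<t}$ generally have positive N\'eron--Tate height, and nothing in your sketch bounds $\hat h(P)$ from above. A bound on $h(s)$ does not bound $\hat h(P)$ either; already in a single fiber that is Habegger's bounded height theorem, which requires removing the anomalous locus. The paper does not derive this from Betti ranks. It invokes Ge's theorem, \Cref{thm:totHeight}, applied to $\calA_{<t}\subseteq\calA_{\le t}$, and you should do the same.

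Step~2, which you leave open, is the paper's main arithmetic input (\Cref{thm:LGO}, \Cref{prop:LGO}). Jumps of $\End(\calA_s)$ cause no trouble if you never use $\End(\calA_s)$. Every flat subgroup scheme lies in the kernel of some $\alpha\in\End(\calA/S)$ of the same relative dimension, so complexity is measured relative to $\Gamma=\End(\calA/S)$, whose Weil norm does not depend on $s$. The paper then proceeds as follows:
\begin{enumerate}
\item It applies Minkowski to the relation lattice of $\gamma_1(x),\dots,\gamma_m(x)$, with $\gamma_i$ realizing the successive minima of $\Gamma$ (\Cref{lem:relations}).
\item The covolume is controlled by the torsion bound and a Lehmer-type lower bound for nonzero heights (\Cref{thm:GR}), not by period estimates.
\item Norms are converted into kernel degrees via \Cref{prop:degreeBound} and B\'ezout.
\end{enumerate}

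In Step~3, plain Pila--Wilkie on your $Z$ is the wrong tool. The lifts of the conjugates are not rational, and many conjugates may share the same rational data. Worse, a semialgebraic block in $Z$ could move only in the $\phi$-directions with $x$ fixed, which gives nothing. You need \Cref{prop:HP}, with the following features:
\begin{enumerate}
\item The integer matrix $M_r(\phi)$ is fixed as the family parameter; it is the same for all conjugates on each of finitely many components of the fundamental set.
\item One counts distinct lifts of the conjugates.
\item The output is an arc that is nonconstant in $\calX$ while the relation stays the genuine endomorphism $\phi$, which is what \Cref{prop:transcendence} needs.
\end{enumerate}

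Two smaller points remain. The arc passes through a conjugate of $P$, not necessarily $P$ itself, so you need that $\calX^{\textnormal{deg}}(t)$ is defined over $K$ (\Cref{rmk:descent}). The whole setup also presupposes the reduction to $S\subseteq\AAA_g$ (\Cref{lem:modularFinitenessReduction}), where positive-dimensional fibers of the modular map are absorbed into $\calX^{\textnormal{deg}}(0)$.
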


Positive-dimensional intersections must be excluded: every positive-dimensional irreducible component $\calY$ of $\calX\cap\calH$, with $\calH$ flat of relative dimension less than $t$, satisfies
\[
\delta_v(\calY)\leq\rdim\calH-\dim\calY<t.
\]
The theorem says that, after removing the degeneracy locus, only finitely many points remain even as $\calH$ varies through all such group subschemes. No simplicity or variation assumption on $\calA/S$ is required. 

Suppose that $\dim\calX<g$ and that $\calX$ is $(g-\dim\calX)$-nondegenerate. Taking $t=g-\dim\calX$, our theorem gives nondensity of the intersection with flat group subschemes of relative codimension greater than $\dim\calX$. This is the codimension range predicted by Pink's \Cref{conj:Pink}, restricted to flat group subschemes.

\medskip

Note that \Cref{thm:mainIntro} recovers several known finiteness results:
\begin{itemize}
    \item
    When $S$ is a point, the theorem recovers the finiteness statement of Habegger--Pila~\cite[Thm.~9.15(iii)]{HP_ominimality} and its nondensity consequence in part~(iv). For curves in a fixed abelian variety, however, our nondegeneracy hypothesis requires that the curve not be contained in any translate of a proper abelian subvariety. This is stronger than the hypothesis of \cite[Thm.~1.1]{HP_ominimality}, so our theorem does not recover that result in full.

    \item Suppose that $g\geq2$ and that $\calX$ is a curve. Taking $t=g-1$ gives finiteness of its intersection with flat group subschemes of relative dimension at most $g-2$, provided that $\calX$ is $(g-1)$-nondegenerate.

    If $S$ is a curve, $\calX$ dominates $S$, and $\calA/S$ has no fixed part, the geometric hypothesis that $\calX$ is not contained in a strict group subscheme even after a finite base change implies the nondegeneracy, by \Cref{defn:t-deg} and \Cref{thm:Gao_criterion}. Thus our theorem recovers \cite[Thm.~1.1]{BC_CurvesinAbelianScheme} in this case.

    \item For $t=1$, the locus $\calA_{<1}$ is the fiberwise torsion locus $\calA_{\mathrm{tors}}$. Our theorem gives finiteness of torsion points outside $\calX^{\textnormal{deg}}(1)$ and therefore implies the degeneracy statement for the universal family in \cite[Thm.~3.4]{GH_RMM}. Together with the induction argument in \cite[Cor.~6.3]{GH_Degeneracy} and the usual reductions to the universal family, this yields the relative Manin--Mumford conjecture over $\Qbar$.
\end{itemize}

%%% 

\subsection{The bounded height input}

A key arithmetic input in the proof of \Cref{thm:mainIntro} is the bounded height theorem of the third-named author \cite[Thm.~8.4]{Ge_IntersectI}, which we recall below.

Bounded height theorems typically treat intersections at complementary dimension, where the expected intersection dimension is zero. These are \emph{likely intersections}, and subgroups of strictly larger codimension belong to the \emph{unlikely intersections} range. In general, a uniform height bound requires removing a geometric exceptional locus.

Bombieri, Masser and Zannier~\cite{BMZ_Anomalous} formulated the bounded height conjecture for algebraic tori, proving it in the case of curves. Habegger proved then the conjecture its analogue for abelian varieties~\cite{Habegger_BHCtori,Habegger_BHConAV}: outside the anomalous locus of a subvariety $X$, the points lying in algebraic subgroups of codimension at least $\dim X$ have bounded height. Habegger and Pila combined this height bound with their counting method to obtain their unlikely intersection results discussed above.

In the cited curve results of Masser--Zannier and the first- and second-named authors, Silverman's specialization theorem \cite[Thm.~C]{Silverman_Specialization} provides the required height bounds. In Gao--Habegger's proof of relative Manin--Mumford, the height bound comes from the height inequality in their joint work with Dimitrov \cite{DimitrovGaoHabegger_UML}.

The result of the fourth-named author \cite{Ge_IntersectI} extends this bounded height approach to abelian schemes, with the degeneracy locus as the exceptional set. We will use the following form of that result. Fix a relatively ample line bundle on $\calA$ and a Weil height associated with a projective embedding of $S$; the \emph{total height} is the sum of the associated fiberwise N\'eron--Tate height and the Weil height on the base.

\begin{theorem}[\cite{Ge_IntersectI}]\label{thm:totHeight}
Let $S$ be a normal quasi-projective variety over a number field $K$, and let $\pi:\calA\to S$ be an abelian scheme. Let $t\in\ZZ_{\ge0}$ and let $\calX$ be a $t$-nondegenerate subvariety of $\calA$, defined over $K$. Then
\[
\bigl(\calX\setminus\calX^{\textnormal{deg}}(t)\bigr) \cap\calA_{\le t}
\]
has bounded total height.
\end{theorem}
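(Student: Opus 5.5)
The plan follows the strategy of Habegger's bounded height theorem, transported to the relative setting with the degeneracy locus as exceptional set. Fix a point $P\in(\calX\setminus\calX^{\textnormal{deg}}(t))\cap\calA_{\le t}$, lying in a flat group subscheme $\calH$ of relative dimension at most $t$, over $s=\pi(P)$. The first step is a geometry-of-numbers approximation. Up to torsion, $\calH_s$ is cut out by endomorphisms, so we can choose a homomorphism $\varphi:\calA_s\to\calA_s^{g-t}$ of controlled norm whose kernel contains $\calH_s$. We then perturb $\varphi$ by a Dirichlet-type approximation inside the rational span of endomorphisms defined over the generic point; this is a finitely generated lattice since $\calA/S$ is fixed. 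The goal is an endomorphism $\psi$ from a \emph{fixed, finite} family of directions, with $\hat h(\psi(P))$ small relative to $\hat h(P)$ and with $\psi$ of degree bounded by a quantity $Q$ that we may choose.

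The second step is an intersection-theoretic height inequality on the image of $\calX$ under the rescaled maps $(x\mapsto [N]x,\ \psi)$. Here $t$-nondegeneracy enters. By Gao's criterion (\Cref{thm:Gao_criterion}), $P\notin\calX^{\textnormal{deg}}(t)$ is equivalent to a rank condition: the Betti map restricted to $\calX$ composed with the projection to the quotient by any relative dimension $t$ subgroup is generically of full rank $\dim\calX$ near $P$. Algebraically, this means the relevant map $\calX\to\calA^{g-t}$ is generically finite near $P$, with degree bounded polynomially in $Q$ by a Bézout/intersection computation. Combining this with the height inequality of Dimitrov--Gao--Habegger type, of the form $\hat h(x)\ge c\,h_S(\pi(x))-c'$ off a proper Zariski closed set, applied to the images, we get two estimates. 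Pulling back an ample bundle along the generically finite map gives a lower bound $\deg\cdot h_{\mathrm{tot}}(P)\ll Q^{a}\hat h(\psi P)+Q^{b}(h_S(s)+1)$. On the other side, $\hat h(\psi P)\ll Q^{-c}\hat h(P)$. We then choose $Q$ as a suitable power of the total height so that the main terms compare, forcing $h_{\mathrm{tot}}(P)$ to be bounded. Points landing in the exceptional closed sets must be handled by Noetherian induction on $\calX$. This induction is consistent because the degeneracy locus of a subvariety $\calY\subseteq\calX$ contains $\calY\cap\calX^{\textnormal{deg}}(t)$, so nondegenerate points stay nondegenerate.

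I expect the main obstacle to be the uniformity of the degree and intersection-number bounds as $\psi$ varies over infinitely many endomorphisms, together with the base variation. The fiberwise Néron--Tate height is not uniformly comparable to a Weil height, and the error $O(h_S(s)+1)$ must be absorbed. The fix I would attempt first is to work on a compactification with the Silverman--Tate comparison $|\hat h-h|\ll h_S+1$. Then the height inequality converts the base term into fiber height on the non-exceptional locus, and the exponents $a,b,c$ are tracked carefully, as in Habegger's abelian proof, so that the choice of $Q$ closes the argument.
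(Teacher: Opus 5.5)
The paper does not prove this statement. It imports it from \cite[Thm.~8.4]{Ge_IntersectI} and presents it as the extension of Habegger's bounded height theorem to abelian schemes, so your Habegger-style outline aims in the right direction, but as written it does not close.

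\textbf{Missing lower bound for the degree.} Insert $\hat h(\psi P)\ll Q^{-c}\hat h(P)$ and $h_S(s)\le h_{\mathrm{tot}}(P)$ into your inequality. It becomes $\deg\cdot h_{\mathrm{tot}}(P)\ll (Q^{a-c}+Q^{b})\,h_{\mathrm{tot}}(P)+Q^{b}$, which is linear in $h_{\mathrm{tot}}(P)$. No choice of $Q$, whether a power of $h_{\mathrm{tot}}(P)$ or a large constant, then bounds $h_{\mathrm{tot}}(P)$ unless $\deg$ is large compared with $Q^{a-c}+Q^{b}$, uniformly in $\psi$. This is exactly where $t$-nondegeneracy must be used quantitatively. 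You need the relevant intersection number of $\calX$ against the pulled-back bundle, normalized by the appropriate power of $\lvert\psi\rvert$, to be bounded below on the compact set of real directions of $\psi$. On a non-proper $\calX$ this number must first be defined uniformly in $\psi$, since $\psi$ does not in general extend to a fixed compactification. You only bound the degree from above, so nondegeneracy never enters.

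\textbf{Misuse of Gao's criterion.} \Cref{thm:Gao_criterion} does not give what you claim. It is a global criterion for $\calX=\calX^{\textnormal{deg}}(t)$ in terms of $\dim\varphi_{/\calB}(\calX)$. It is not a pointwise Betti-rank characterization of $P\notin\calX^{\textnormal{deg}}(t)$, and your pointwise version is false. The generic rank of a Betti map on any neighbourhood of $P$ in the irreducible $\calX$ equals its generic rank on all of $\calX$. It therefore cannot separate points inside $\calX^{\textnormal{deg}}(t)$ from points outside when $\emptyset\ne\calX^{\textnormal{deg}}(t)\ne\calX$.

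\textbf{No uniformity in $\psi$.} Each inequality you invoke holds only off a proper closed subset $Z_\psi$, with constants depending on $\psi$ in an uncontrolled way; this covers both a Dimitrov--Gao--Habegger inequality applied to $\psi(\calX)$ and a comparison along the generically finite map. With $Q$ tied to $h_{\mathrm{tot}}(P)$, the maps $\psi$ form an infinite family. Noetherian induction cannot absorb $\bigcup_\psi Z_\psi$, which may be Zariski dense. With $Q$ fixed, only finitely many $\psi$ occur, but then $Q$ cannot be chosen large in terms of constants that themselves depend on $\psi$. Two things are needed:
\begin{enumerate}
\item a height inequality whose constants are explicit polynomials in $\lvert\psi\rvert$, valid at every point of $\calX$ that is isolated in its fibre under the relevant map;
\item a dimension count showing that positive-dimensional fibre components have $\delta_v<t$; these components lie in a coset of the kernel inside a single fibre of $\pi$.
\end{enumerate}
Only then is the exceptional set contained in $\calX^{\textnormal{deg}}(t)$ independently of $\psi$. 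Noetherian induction is legitimate only for finitely many exceptional sets fixed in advance, such as that of one inequality $h_S(\pi(P))\ll\hat h_\calL(P)+1$ on $\calX$ itself. Moreover, the inclusion the induction uses is $\calY^{\textnormal{deg}}(t)\subseteq\calX^{\textnormal{deg}}(t)$, the reverse of the one you state.
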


The result of \cite[Thm.~8.4]{Ge_IntersectI} also applies to points that are close, in a suitable height sense, to the group subschemes in question. When $S$ is a point, \Cref{thm:totHeight} recovers Habegger's bounded height theorem \cite{Habegger_BHConAV}. When $S$ and $\calX$ are curves, with $\calX$ dominating $S$, the corresponding height bound follows from Silverman's specialization theorem \cite{Silverman_Specialization} under its no-fixed-part hypothesis.

The cited theorem gives bounded total height for the intersection with $\calA_{\le t}$. We use this bound as the starting point for the proof of \Cref{thm:mainIntro}, which establishes finiteness for the intersection with $\calA_{<t}$, outside the same degeneracy locus.

%%%%%%
\subsection{Proof outline}
Our proof follows the Pila--Zannier strategy~\cite{Pila_Zannier}, which combines o-minimal point counting~\cite{Pila_Wilkie} with Diophantine estimates. We begin with the total height bound supplied by \Cref{thm:totHeight} on the relevant intersection outside the degeneracy locus. Bounded height alone does not imply finiteness, since the points may have arbitrarily large degree over $K$. We address this by relating their field degrees to the complexity of the subgroup relations they satisfy.

Fix $\Gamma=\End(\calA/S)$ and regard it as a subring of the endomorphism ring of each fiber. For $x\in\calA_s(\bar K)$, we call its $\Gamma$-special closure the common kernel of the endomorphisms in $\Gamma$ that annihilate $x$. Its degree, denoted by $\Delta_\Gamma(x)$, accounts for the degree of the identity component and the number of connected components. In \Cref{thm:LGO} we give an explicit upper bound for this complexity in terms of field degree, heights, and the endomorphism lattice. The argument combines geometry of numbers, following Masser \cite{Masser_Linear}, with estimates of Gaudron--R\'emond \cite{GR}. For a fixed family and a fixed bound on total height, it yields a large-Galois-orbit result with constants $c,\epsilon>0$ such that
\[
[K(x):\QQ]\geq c\Delta_\Gamma(x)^\epsilon
\]
see \Cref{prop:LGO}.

In Betti coordinates, subgroup relations can be encoded by rational data whose height is bounded polynomially in $\Delta_\Gamma(x)$. If the complexity of a point of the intersection is large, the Galois orbit supplies enough such data to apply the Habegger--Pila refinement~\cite[Cor.~7.2]{HP_ominimality} of the Pila--Wilkie counting theorem~\cite{Pila_Wilkie} to produce a nonconstant analytic arc with the required semialgebraic property. Mixed Ax--Schanuel~\cite{Gao_mixed}, in the form of \Cref{prop:transcendence}, implies that the arc belongs in the degeneracy locus, contradicting the choice of the point. Complexity is therefore bounded, leaving only finitely many relevant flat group subschemes to consider. Their positive-dimensional intersections with $\calX$ are already contained in the degeneracy locus, which proves finiteness.

\subsection{Applications}
In \S7 we give two applications concerning finitely generated groups of sections. The first is a relative Mordell--Lang problem. For a finitely generated subgroup $\Lambda\subseteq\calA(S)$, denote by $\Lambda_s^{\mathrm{div}}$ the divisible hull of its specialization in $\calA_s(\bar K)$. We propose a \emph{relative Mordell--Lang conjecture} governing the locus
\[
\bigcup_{s\in S(\bar K)} \bigl(\calX_s(\bar K)\cap\Lambda_s^{\mathrm{div}}\bigr).
\]
Suppose that $\calX$ dominates $S$, has geometrically irreducible generic fiber, and satisfies the transversality condition that $\calX_{\bar\eta}$ is not contained in any translate of a proper abelian subvariety by a point of $\Lambda_{\bar\eta}^{\mathrm{div}}$. The conjecture predicts nondensity whenever $\dim\calX<\rdim\calA$. We show that it follows from Pink's conjecture. Under the same transversality and dimension conditions, we prove it for section images in powers of an abelian scheme for which every nontrivial simple factor of the geometric generic fiber has maximal variation. Here maximal variation means that the moduli map is generically finite onto its image. The proof encodes the generators of $\Lambda$ as additional coordinates and applies our main theorem to the resulting subgroup relations.

The second application concerns injectivity of specialization of $\Lambda$. Suppose that every nontrivial geometric simple factor of the generic fiber of $\calA/S$ has maximal variation and dimension greater than $\dim S$. We prove that
\[
\operatorname{sp}_s:\Lambda\longrightarrow\calA_s(\bar K)
\]
is injective for all $s\in S(\bar K)$ outside a proper Zariski closed subset. In particular, over a curve the exceptional set is finite under these hypotheses. This complements bounded height results for specialization \cite{Silverman_Specialization,Ge_IntersectI} by establishing Zariski nondensity under suitable assumptions on the variation and dimensions of the geometric simple factors.

The paper is organized as follows. Section~2 develops the endomorphism and lattice estimates used later. Section~3 reviews weakly special subvarieties, degeneracy loci, and Betti coordinates. Section~4 supplies the counting and functional transcendence arguments, while Section~5 proves the complexity and large Galois orbit estimates. We prove the main theorem in Section~6 and give the applications in Section~7.

\subsection*{Notation and conventions}
\begin{enumerate}
    \item Unless otherwise stated, $K\subseteq\CC$ is a number field containing $\QQ[\zeta_3]$,  and $\bar K$ denotes its algebraic closure in $\CC$. Let $(A,L)$ be a $g$-dimensional polarized abelian variety over $K$, with polarization given by the ample line bundle $L$.
    
    \item Varieties are geometrically irreducible and reduced. Unless otherwise specified, subvarieties are Zariski closed. We write $S$ for a normal quasi-projective variety over either $K$ or $\CC$, and $\calA/S$ for an abelian scheme over $S$.
    
    \item  We denote by $\hat h_L:A(\bar K)\to\RR_{\ge0}$ the quadratic part of the N\'eron--Tate height associated to $L$, namely the canonical height associated to the symmetric class $\frac12(L+[-1]^*L)$. By abuse of notation, we call this the N\'eron--Tate height associated to $L$. We denote by $h_F(A)$ the stable Faltings height of $A$.

    \item The ring $\End(A)$ consists of the endomorphisms of $A$ \emph{defined over $K$}. For $R=\QQ$ or $\RR$, we write $\End_R(A):=\End(A)\otimes_\ZZ R$ for the
    corresponding endomorphism algebra.

    \item For a family $f:\calX\to\calY$ and a subscheme $\calZ\subseteq\calX$, we write
    \[
    \rdim\calZ=\rdim_f\calZ:=\dim\calZ-\dim f(\calZ)
    \]
    for the dimension of $\calZ$ relative to the base.
\end{enumerate}

\subsection*{Acknowledgements}
The authors are grateful to Ziyang Gao for introducing L.C. and T.G. to each other at the Mordell 100 conference, where they began discussing the problem that eventually led to this work.\\
F.B., L.C.~and F.T.~were supported by the PRIN 2022 project ``2022\-HPSNCR: Semi-abelian varieties, Galois representations and related Diophantine problems'' and the ``National Group for Algebraic and Geometric Structures, and their Applications'' (GNSAGA INdAM). F.B.~and L.C.~ were also supported by the project SERICS (PE00000014)
under the MUR National Recovery and Resilience Plan funded by the
European Union - NextGenerationEU. T.G.~thanks the Department of Mathematics at Princeton University for its hospitality during his instructorship. F.T.~thanks the Department of Mathematics at Princeton University for its hospitality and support during the preparation of this paper.

\subsection*{Disclosure of AI assistance} 
The authors began this project in early 2025 and developed its main mathematical ideas without AI assistance. ChatGPT-6 Astra was subsequently used for language editing and assistance with proof checking. A suggestion from ChatGPT-6 Astra led to an improvement of the bound in Theorem~\ref{thm:LGO} through Lemma~\ref{lem:relations}, and to the application in \Cref{cor:rml-section-images}. The authors prepared the final manuscript and take full responsibility for its content and mathematical correctness.

%\section{Introduction}
%\input{S1:Introduction}

% \section{Preliminary}
% \input{s2}

% \section{Mix}
% \input{rational_representation}

\section{Endomorphisms}
In this section, let $A$ be a complex abelian variety of dimension $g\ge 1$. We collect some preliminary results about its endomorphisms that will be used below.

\subsection{The rational representation}\label{subsec:rationalRepresentation}
The analytification of $A$ is a complex torus, which is canonically isomorphic to $V/\Lambda$, where $V=\Lie A=(H^0(A,\Omega^1))^\dual$ is the Lie algebra and $\Lambda=H_1(A,\ZZ)$. The inclusion of the period lattice identifies $\Lambda_{\mathbb R}:=\Lambda\otimes_{\mathbb Z}\mathbb R$ with the underlying real vector space of $V$. An endomorphism $\gamma:A \to A$ lifts uniquely to a $\CC$-linear map $\mathrm{d}\gamma:V \to V$ satisfying the property that $\mathrm{d}\gamma(\Lambda) \subseteq \Lambda$. This gives rise to an injective homomorphism of algebras by functoriality
\[
\rho_a: \End(A) \hookrightarrow \End_\CC(V), \qquad \rho_a(\gamma)=\mathrm{d}\gamma,
\]
called the \emph{analytic representation} of $\End(A)$. Meanwhile, the homomorphism $\gamma$ induces a $\ZZ$-linear map $\gamma_*:\Lambda \to \Lambda$ that determines $\gamma$ and $\rho_a(\gamma)$ completely. This gives rise to an injective homomorphism of algebras
\[
\rho_r: \End(A) \hookrightarrow \End_\ZZ(\Lambda), \qquad \rho_r(\gamma)=\gamma_*,
\]
called the \emph{rational representation} of $\End(A)$. We use the same notation for the scalar extensions of these representations whenever the coefficient field is clear, e.g. the $\QQ$-extensions $\End_\QQ(A)\rightarrow \End(V)$ and $\End_\QQ(A)\rightarrow \End(\Lambda_\QQ)$ are also denoted by $\rho_a$ and  $\rho_r$ respectively. 

\begin{remark}\label{rem:extendedRationalRepr}
    Fix $\gamma \in \End(A)$. Choose a complex basis of $V$ and a $\mathbb Z$-basis of $\Lambda$, and denote by $M_a(\gamma) \in \Mat_g(\CC)$ and $M_r(\gamma) \in \Mat_{2g}(\ZZ)$ the matrices representing $\rho_a(\gamma)$ and $\rho_r(\gamma)$, respectively. Let $b:V \to \RR^{2g}$ be the trivialization induced by associating to every vector its real coordinates with respect to the basis of $\Lambda$. The composition 
    \[ 
    \RR^{2g}\xrightarrow{b^{-1}} V\xrightarrow{\simeq}\CC^g 
    \]
    is represented by the \emph{period matrix} $\Pi \in \textnormal{Mat}_{g\times {2g}}(\CC)$, whose columns are the complex coordinates of the basis vectors of $\Lambda$. The representations give rise to a commutative diagram
    \[
    \begin{tikzcd}
        \RR^{2g} \arrow[r, "M_r(\gamma)"] & \RR^{2g}\\
        V \arrow[d,"\simeq", left] \arrow[u, "b"] \arrow{r}{\rho_a(\gamma)} & V \arrow[u, "b"] \arrow[d,"\simeq"]\\
       \CC^g \arrow[r, "M_a(\gamma)"] & \CC^g
    \end{tikzcd}.
    \]It leads to the following compatibility condition between the analytic and rational representations:
    \begin{equation}\label{eqn:compatibility}
    \Pi\cdot M_r(\gamma) = M_a(\gamma) \cdot \Pi.
    \end{equation}
\end{remark}

The rational representation converts the geometry of abelian subvarieties into linear algebra over $\QQ$. For an abelian subvariety $B\subseteq A$ we write $V_B:=\Lie B\subseteq V$ and $\Lambda_B:=H_1(B,\mathbb{Z})\subseteq\Lambda$. Given $\gamma\in \End(A)$, denote its image, kernel and identity component of the kernel respectively by $\image \gamma$, $\ker\gamma$, $\ker^0\gamma$. In general, for $\gamma\in \End_\QQ(A)$, we let $\image \gamma := \image(n\gamma)$ and $\ker^0 \gamma := \ker^0 (n\gamma)$, where $n\in \ZZ$ is a denominator of $\gamma$, so that $n\gamma\in \End(A)$. These definitions are independent of the chosen denominator, since multiplication by a nonzero integer does not change the image or the connected kernel of a homomorphism of abelian varieties. We record the dictionary we shall use throughout; see \cite[\S1.2, \S5.3]{BirLan}.

\begin{lemma}\label{lem:dictionary}
Let $\gamma\in\End_{\mathbb{Q}}(A)$ and let $B\subseteq A$ be an abelian subvariety.  Then:
\begin{enumerate}
  \item $\Lambda_{\image\gamma,\mathbb{Q}}
        =\rho_r(\gamma)\Lambda_{\mathbb{Q}}$ and $\Lambda_{\ker^{0}\gamma,\mathbb{Q}}=\ker\rho_r(\gamma)$; in
        particular, $\dim\image\gamma+\dim\ker^{0}\gamma=g$.
  \item $B\mapsto\Lambda_{B,\mathbb{Q}}$ is an inclusion-preserving
        bijection between the abelian subvarieties of $A$ and the
        $\mathbb{Q}$-subspaces $W\subseteq\Lambda_{\mathbb{Q}}$ such that
        $W_\RR$ is a $\mathbb{C}$-subspace of $V$.
  \item If $\gamma\in\End(A)$ is an isogeny, then
        $\deg\gamma=[\Lambda:\rho_r(\gamma)\Lambda]
        =\lvert\det\rho_r(\gamma)\rvert$.
\end{enumerate}
\end{lemma}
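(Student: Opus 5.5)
The plan is to reduce all three statements to the uniformization $A^{\an}=V/\Lambda$ and the functoriality of $H_1$, working throughout with $n\gamma\in\End(A)$ in place of $\gamma$. This is harmless for (1): $\rho_r(n\gamma)=n\rho_r(\gamma)$, and over $\QQ$ this scaling changes neither the image nor the kernel. So assume $\gamma\in\End(A)$.

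I would prove (2) first, since (1) uses it. Let $B\subseteq A$ be an abelian subvariety. Then $B^{\an}=V_B/\Lambda_B$ with $V_B\subseteq V$ a $\CC$-subspace and $\Lambda_B=\Lambda\cap V_B$ a lattice in $V_B$. Hence $W=\Lambda_{B,\QQ}$ satisfies $W_\RR=V_B$, which is complex, and $B\mapsto\Lambda_{B,\QQ}$ is clearly inclusion preserving.

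Injectivity: $W$ recovers $V_B=W_\RR$, and $B^{\an}$ is the image of $V_B$ in $V/\Lambda$.

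Surjectivity: given $W$ with $W_\RR$ complex, $W\cap\Lambda$ is a lattice in $W_\RR$, because $W$ is rational. Thus $T=W_\RR/(W\cap\Lambda)$ is a closed subtorus of $A^{\an}$. A polarization of $A$ gives a Riemann form on $V$, and its restriction to $W_\RR$ is still positive definite and integral on $W\cap\Lambda$. So $T$ is an abelian variety, and by GAGA/Chow it is an algebraic abelian subvariety $B$ with $\Lambda_{B,\QQ}=W$.

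This algebraization step is the only non-formal point. I would cite \cite[\S1.2]{BirLan} for it, or argue via Chow's theorem applied to the closed analytic subgroup $T$ of the projective variety $A$.

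For (1), the homomorphism $\gamma$ lifts to $\mathrm d\gamma$ on $V$, which restricts to $\rho_r(\gamma)$ on $\Lambda$. Its image $\gamma(A)$ is an abelian subvariety whose uniformization is $\mathrm d\gamma(V)$ modulo its lattice. Since $\mathrm d\gamma(V)=\rho_r(\gamma)(\Lambda_\RR)$ and $\Lambda_{\image\gamma}$ contains $\rho_r(\gamma)\Lambda$ with finite index, we get $\Lambda_{\image\gamma,\QQ}=\rho_r(\gamma)\Lambda_\QQ$.

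For the kernel, $(\ker\gamma)^{\an}$ is $\ker(\mathrm d\gamma)/(\ker(\mathrm d\gamma)\cap\Lambda)$ together with finitely many torsion translates. Its identity component is therefore the torus on $\ker\mathrm d\gamma$. Moreover $\ker\mathrm d\gamma$ is the real span of the rational subspace $\ker\rho_r(\gamma)$, because $\mathrm d\gamma$ is the $\RR$-linear extension of $\rho_r(\gamma)$. This gives $\Lambda_{\ker^0\gamma,\QQ}=\ker\rho_r(\gamma)$.

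The dimension formula then follows from rank–nullity for $\rho_r(\gamma)$ on $\Lambda_\QQ$ ($2g=2\dim\image\gamma+2\dim\ker^0\gamma$), dividing by $2$.

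For (3), if $\gamma$ is an isogeny then $\mathrm d\gamma$ is invertible. From $A\cong V/\Lambda$ and $\gamma(v+\Lambda)=\mathrm d\gamma(v)+\Lambda$ we get
\[\ker\gamma\cong(\mathrm d\gamma)^{-1}\Lambda/\Lambda\cong\Lambda/\rho_r(\gamma)\Lambda.\]
Since isogenies in characteristic zero are separable, $\deg\gamma=\#\ker\gamma=[\Lambda:\rho_r(\gamma)\Lambda]$. Finally, the index of the image of a full-rank integer matrix acting on $\ZZ^{2g}$ is $\lvert\det\rho_r(\gamma)\rvert$, by Smith normal form.
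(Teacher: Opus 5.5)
Your proof is correct and takes essentially the paper's approach: the paper gives no argument of its own and defers to \cite[\S1.2, \S5.3]{BirLan}, and you have written out the standard uniformization-and-linear-algebra argument from that source. The one step you leave implicit is $W_\RR\cap\Lambda=W\cap\Lambda$ for a rational subspace $W$, which makes $W_\RR/(W\cap\Lambda)\to A^{\an}$ injective; with that in place, Chow's theorem alone gives the algebraization in (2), so the Riemann-form remark is optional.
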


The subspaces in \Cref{lem:dictionary}(1) satisfy the condition of (2) automatically, since $\rho_a(\gamma)=\mathrm{d}\gamma$ is $\mathbb{C}$-linear.

\begin{defn}\label{def:charpoly}
The \emph{characteristic polynomial} and the \emph{trace} of $\gamma\in\End_{\QQ}(A)$ are defined as
\[
P_\gamma(t):=\det\bigl(t-\rho_r(\gamma)\bigr), \qquad \Tr(\gamma):=\Tr\bigl(\rho_r(\gamma)\bigr).
\]
\end{defn}

Thus $P_\gamma$ is monic of degree $2g$ with rational coefficients, integral when $\gamma\in\End(A)$, and $\Tr(\gamma)$ is the negative of the coefficient of $t^{2g-1}$ in $P_\gamma(t)$.

\subsection{The Rosati involution}\label{ss:rosati}

Assume now that the abelian variety $A$ comes equipped with an ample line bundle $L$, and denote by $\lambda:A\to A^{\vee}$, $\lambda(x)=t_x^*L\otimes L^{-1}$, the induced polarization.  The first Chern class of $L$ is a nondegenerate alternating form
\[
E=E^{L}: \Lambda_{\mathbb{Q}}\times\Lambda_{\mathbb{Q}} \longrightarrow\mathbb{Q},
\]
known as the \emph{Riemann form} of $L$.  Extending $E$ to $V=\Lambda_{\mathbb{R}}$ by $\mathbb{R}$-bilinearity, the Hermitian form $H(x,y)=E(ix,y)+iE(x,y)$ on $V$ is positive definite \cite[\S2.1]{BirLan}, and we write
\[
S(x,y):=E(ix,y)=\operatorname{Re}H(x,y),
\]
a positive definite symmetric bilinear form on the real vector space $V$.

The \emph{Rosati involution} with respect to $L$ is defined as
\[
\dagger:\End_{\mathbb{Q}}(A)\to\End_{\mathbb{Q}}(A), \qquad \gamma\mapsto\gamma^{\dagger}:=\lambda^{-1}\gamma^{\vee}\lambda.
\]
It is the adjoint involution with respect to $E$:
\begin{equation}\label{eq:adjoint}
E(\gamma x,y)=E(x,\gamma^{\dagger}y) \qquad\text{for all }x,y\in\Lambda_{\mathbb{Q}},
\end{equation}
where $\End_\QQ(A)$ acts on $\Lambda_\QQ$ through the rational representation. Since $\rho_a(\gamma)$ and $\rho_a(\gamma^{\dagger})$ are $\mathbb{C}$-linear, $\dagger$ is equally the adjoint with respect to $S$:
\begin{equation}\label{eq:adjointS}
S(\gamma x,y)=S(x,\gamma^{\dagger}y) \qquad\text{for all }x,y\in V .
\end{equation}
An endomorphism $\gamma\in\End_{\mathbb{Q}}(A)$ is called \emph{symmetric} if $\gamma=\gamma^{\dagger}$.

Two abelian subvarieties $B$ and $C$ of $A$ are said to be \emph{orthogonal} if $\iota_C^{\vee}\circ\lambda\circ\iota_B=0$, where $\iota_*:*\hookrightarrow A$ denotes the inclusion map; equivalently, $E(\Lambda_{B,\mathbb{Q}},\Lambda_{C,\mathbb{Q}})=0$. If, in addition, $B$ and $C$ have complementary dimensions, then $B$ and $C$ are \emph{orthogonal complements}, and we write $C=B^{\perp}$; on rational homology this reads
\begin{equation}\label{eq:perp}
\Lambda_{B^{\perp},\mathbb{Q}}=\Lambda_{B,\mathbb{Q}}^{\perp} :=\{x\in\Lambda_{\mathbb{Q}}: E(x,\Lambda_{B,\mathbb{Q}})=0\}.
\end{equation}
Since $E$ restricts nondegenerately to $\Lambda_{B,\mathbb{Q}}$, we have $\Lambda_{\mathbb{Q}}=\Lambda_{B,\mathbb{Q}}\oplus \Lambda_{B,\mathbb{Q}}^{\perp}$. Every abelian subvariety $B\subseteq A$ has a unique orthogonal complement $B^{\perp}=\ker^0(\iota_B^\vee\circ \lambda)$, which on rational homology is characterized by \Cref{eq:perp}.

\begin{lemma}\label{lem:orthogonality}
If $\gamma\in\End_{\mathbb{Q}}(A)$, then we have
\begin{enumerate}
    \item $(\image\gamma^{\dagger})^{\perp}=\ker^{0}\gamma$, and
    \item $\ker^{0}\gamma=\ker^{0}(\gamma^{\dagger}\circ\gamma)$.
\end{enumerate}
In particular, if $\gamma$ is symmetric, then $\image\gamma$ and $\ker^{0}\gamma$ are orthogonal complements.
\end{lemma}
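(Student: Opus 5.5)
Everything will be translated to rational homology through \Cref{lem:dictionary}, with $\gamma$ replaced by $n\gamma\in\End(A)$ where needed, so that $\gamma$, $\gamma^\dagger$ and $\gamma^\dagger\gamma$ act on $\Lambda_\QQ$ via $\rho_r$. By \Cref{lem:dictionary}(1) we have $\Lambda_{\ker^0\gamma,\QQ}=\ker\rho_r(\gamma)$ and $\Lambda_{\image\gamma^\dagger,\QQ}=\rho_r(\gamma^\dagger)\Lambda_\QQ$. By \Cref{eq:perp} and the injectivity in \Cref{lem:dictionary}(2), part (1) then reduces to the linear-algebra identity
\[
\ker\rho_r(\gamma)=\bigl(\rho_r(\gamma^\dagger)\Lambda_\QQ\bigr)^{\perp}
\]
with respect to $E$.

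For part (1), I will use the adjunction \Cref{eq:adjoint}. For $x\in\Lambda_\QQ$, we have $x\in(\gamma^\dagger\Lambda_\QQ)^\perp$ if and only if $E(\gamma^\dagger y,x)=0$ for all $y$. Since $\dagger$ is an involution, $E(\gamma^\dagger y,x)=E(y,\gamma x)$, so the condition says $E(y,\gamma x)=0$ for all $y$. Nondegeneracy of $E$ turns this into $\gamma x=0$, which gives the identity. One caveat: $B^\perp$ for $B=\image\gamma^\dagger$ is defined as the abelian subvariety whose rational homology is $\Lambda_{B,\QQ}^\perp$. Its existence and uniqueness are stated in the paper, so matching rational homology is enough.

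For part (2), the inclusion $\ker^0\gamma\subseteq\ker^0(\gamma^\dagger\gamma)$ is clear. For the reverse inclusion, I will again work on homology, but now with the positive definite form $S$ on $V=\Lambda_\RR$ and the adjunction \Cref{eq:adjointS}. If $\gamma^\dagger\gamma x=0$ with $x\in\Lambda_\QQ$, then
\[
0=S(\gamma^\dagger\gamma x,x)=S(\gamma x,\gamma x),
\]
so $\gamma x=0$ by positivity. Hence $\ker\rho_r(\gamma^\dagger\gamma)=\ker\rho_r(\gamma)$, and \Cref{lem:dictionary}(1),(2) give equality of the connected kernels.

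For the final assertion, take $\gamma=\gamma^\dagger$. Part (1) gives $(\image\gamma)^\perp=\ker^0\gamma$, and \Cref{lem:dictionary}(1) gives complementary dimensions, so the two are orthogonal complements.

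No step is a real obstacle. The only point needing care is that $\image$ and $\ker^0$ are insensitive to scaling by the denominator, so that the homological identities pass from $\End_\QQ(A)$ to honest subvarieties. Part (2) depends on positivity of $S$ and fails for $E$ alone; this is why \Cref{eq:adjointS} is needed there.
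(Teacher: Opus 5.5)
Your proposal is correct and follows essentially the same route as the paper: part (1) via the $E$-adjunction \eqref{eq:adjoint} and nondegeneracy of $E$, transported through \Cref{lem:dictionary}(1) and \eqref{eq:perp}; part (2) via $S(x,\gamma^{\dagger}\gamma x)=S(\gamma x,\gamma x)$ and positive definiteness of $S$; the final assertion from (1) with $\gamma=\gamma^{\dagger}$. The only cosmetic difference is in part (1): you use $\gamma^{\dagger\dagger}=\gamma$ to move the adjoint across, whereas the paper reads $E(x,T^{*}y)=E(Tx,y)$ directly off \eqref{eq:adjoint}.
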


\begin{proof}
Write $T:=\rho_r(\gamma)$, using the same notation for its real extension. By \Cref{eq:adjoint} and \Cref{eq:adjointS}, $\rho_r(\gamma^{\dagger})$ is its adjoint for both $E$ and $S$, and we denote it by $T^{*}$.

For (1), let $x\in\Lambda_{\mathbb{Q}}$. Then
\[
x\in\bigl(T^{*}\Lambda_{\mathbb{Q}}\bigr)^{\perp} \iff E(x,T^{*}y)=0, \ \forall y \iff E(Tx,y)=0,\ \forall y \iff Tx=0,
\]
the last equivalence by nondegeneracy of $E$.  Hence $\bigl(\Lambda_{\image\gamma^{\dagger},\mathbb{Q}}\bigr)^{\perp} =\ker T$. The assertion follows by \Cref{lem:dictionary}(1) and \eqref{eq:perp}.

For (2), the inclusion $\ker T\subseteq\ker T^{*}T$ is clear.  Conversely, by \eqref{eq:adjointS} we have $S(x,T^{*}Tx)=S(Tx,Tx)$ for every $x\in V$; so if $T^{*}Tx=0$ then $S(Tx,Tx)=0$, whence $Tx=0$ because $S$ is positive definite.  Thus $\ker T=\ker T^{*}T$, and we conclude again by Lemma~\ref{lem:dictionary}(1).

The final assertion follows from (1) applied to $\gamma=\gamma^{\dagger}$.
\end{proof}

\begin{comment}
For an abelian subvariety $B$, there is the following commutative diagram:
\[
\begin{tikzcd}
    A \arrow{r}{\lambda} & A^\dual \arrow{d}{\iota_B^\dual}\\
    B \arrow[hook]{u}{\iota_B} \arrow{r}{\lambda_B} & B^\dual 
\end{tikzcd}
\]
where $\lambda_B$ is the isogeny restricted from $\lambda$. The $\QQ$-endomorphism
\[
\epsilon_B:=\iota_B\circ\lambda_B^{-1}\circ\iota_B^\dual\circ\lambda
\]
is a \emph{symmetric idempotent} in $\End_\QQ(A)$ such that $\image(\epsilon_B) = B$. This construction gives rise to a bijective correspondence between abelian subvarieties of $A$ and symmetric idempotents of $\End_\QQ(A)$; see \cite[Prop.~2.1]{BirLan2} for more details. The exponent of $B$, written as $e(B)$, is defined as the exponent of the finite group $\ker\lambda_B$.

A $\QQ$-endomorphism $\gamma\in \End_\QQ(A)$ induces an action on the $l$-adic rational Tate module $V_l (A) =T_l (A)\otimes \QQ$, which gives rise to a representation
\[
\rho : \End_\QQ(A) \rightarrow  \End_{\QQ_l}(V_l(A)).
\]
\end{comment}

\subsection{Degree of the kernel}
Let $(A,L)$ be the polarized abelian variety as above. The pairing induced by the Rosati involution
\[
(\cdot,\cdot):\End_{\mathbb{Q}}(A)\times \End_{\mathbb{Q}}(A)\longrightarrow\mathbb{R}, \qquad (\gamma_1,\gamma_2)\mapsto\Tr(\gamma_1^{\dagger}\gamma_2),
\]
is a positive definite symmetric bilinear form, called the \emph{Weil metric}.  Indeed, choose a basis $e_1,\dots,e_{2g}$ of the real vector space $V$ orthonormal for $S$; then for $T=\rho_r(\gamma)$ we have
\begin{equation}\label{eq:posdef}
    \Tr(\gamma^{\dagger}\gamma) =\sum_{i=1}^{2g}S\bigl(T^{*}Te_i,e_i\bigr) =\sum_{i=1}^{2g}S\bigl(Te_i,Te_i\bigr)\ge0,
\end{equation}
with equality if and only if $T=0$, that is, if and only if $\gamma=0$.  For $\gamma\in\End_{\mathbb{Q}}(A)$ we write $\lVert\gamma\rVert:=\sqrt{\Tr(\gamma^{\dagger}\gamma)}$ for the induced norm. Notice that
\[
\|[n]\|^2=2gn^2, \qquad \|\gamma\|_{L^{\otimes m}}=\|\gamma\|_L\quad(m\geq1).
\]
 
For a closed subscheme $X$ of $A$, we write 
\[
\deg (X)=\deg_L(X):=(c_1(L)^{\dim X}\cdot [X])
\]
for the projective degree of $X$ with respect to $L$. For an abelian subvariety $B$ of $A$, we write $\covol(\Lambda_B)$ for the covolume of $\Lambda_B$ inside $V_B$ with respect to $S$. The next lemma is well-known and useful to us:

\begin{lemma}\label{lem:covolumeDegree}
For every abelian subvariety $B\subseteq A$, we have $\deg B=(\dim B)!\covol(\Lambda_B)$.
\end{lemma}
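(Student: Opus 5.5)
We prove $\deg B=(\dim B)!\,\covol(\Lambda_B)$ by reducing the degree to a top self-intersection on $B$ and computing it through the Riemann form. Put $k:=\dim B$ and $L_B:=\iota_B^{*}L$. By the projection formula,
\[
\deg B=(c_1(L)^{k}\cdot[B])=(c_1(L_B)^{k})_B .
\]
So the task is to compute the top self-intersection of the ample bundle $L_B$ on the abelian variety $B$. Its first Chern class is the restriction $E_B:=E|_{\Lambda_B\times\Lambda_B}$. This form is integral on $\Lambda_B$ and nondegenerate, by the discussion preceding \eqref{eq:perp}.

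The first step is the Riemann--Roch theorem for abelian varieties (see \cite[\S3.6]{BirLan}):
\[
(L_B^{k})=k!\,\chi(L_B).
\]
Combined with the standard fact $\chi(L_B)^2=\deg\lambda_{L_B}=\det(E_B)$, where $\det(E_B)$ is the determinant of the Gram matrix of $E_B$ in any $\ZZ$-basis of $\Lambda_B$, this gives
\[
\chi(L_B)=\sqrt{\det E_B}=\operatorname{Pf}(E_B),
\]
using the sign $\chi(L_B)>0$ since $L_B$ is ample. It therefore remains to show $\sqrt{\det E_B}=\covol(\Lambda_B)$, with the covolume taken for the metric $S$ on $V_B$.

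The second step is a linear-algebra comparison on the real $2k$-dimensional space $V_B$. Choose an $S$-orthonormal real basis of $V_B$ of the form $f_1,if_1,\dots,f_k,if_k$. This is possible because $H|_{V_B}$ is a positive definite Hermitian form and $S=\operatorname{Re}H$, so any $H$-unitary basis $f_1,\dots,f_k$ works. Since $E(x,y)=S(-ix,y)$ (equivalently $E(x,y)=\operatorname{Im}H(x,y)$), the Gram matrix of $E_B$ in this basis is block diagonal with blocks $\begin{pmatrix}0&\pm1\\ \mp1&0\end{pmatrix}$, and so has determinant $1$. Now let $P$ be the matrix expressing a $\ZZ$-basis of $\Lambda_B$ in this orthonormal basis. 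Then
\[
\covol(\Lambda_B)=|\det P| \quad\text{and}\quad \det(E_B\text{ on }\Lambda_B)=(\det P)^2\cdot 1 .
\]
Hence $\sqrt{\det E_B}=\covol(\Lambda_B)$.

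Putting the two steps together,
\[
\deg B=k!\,\chi(L_B)=k!\,\covol(\Lambda_B).
\]

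No step here is a real obstacle. The only points needing care are normalisation conventions:
\begin{itemize}
\item that $c_1(L)$ is exactly $E$, with no factor of $2\pi$ or other scaling;
\item the sign convention relating $E$ and $\operatorname{Im}H$;
\item the identification of $\chi(L_B)^2$ with $\det E_B$.
\end{itemize}
These would be checked against \cite{BirLan}, where all three are standard.
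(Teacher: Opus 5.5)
Your proof is correct and is essentially the argument behind the paper's proof, which simply cites \cite[\S3.6]{BirLan}. That section supplies the Riemann--Roch step $(L_B^{k})=k!\,\chi(L_B)$ with $\chi(L_B)=\sqrt{\det E_B}$, and your comparison in the $S$-orthonormal basis $f_1,if_1,\dots,f_k,if_k$, where the Gram matrix of $E=\operatorname{Im}H$ has determinant $1$, correctly identifies $\sqrt{\det E_B}$ with $\covol(\Lambda_B)$ under the normalisations the paper fixes for $E$, $H$ and $S$.
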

\begin{proof}
    See \cite[\S3.6]{BirLan}.
\end{proof}

The next lemma justifies the use of orthogonality in studying degrees of abelian subvarieties.

\begin{lemma}\label{lem:polarizedMorphism}
Let $B$ be an abelian subvariety of $A$ of dimension $g'$, and let $B^{\perp}$ be its orthogonal complement.  Consider the isogeny $f:B\times B^{\perp}\to A$ given by $(x,y)\mapsto x+y$.  Then
\[
f^{*}L \cong p_1^{*}L_B\otimes p_2^{*}L_{B^{\perp}},
\]
where $p_1,p_2$ are the projections and $L_B,L_{B^{\perp}}$ are the restrictions of $L$ on $B,B^{\perp}$, respectively.  Consequently, we have
\[
\deg(A)\cdot\deg(B\cap B^{\perp})=\binom{g}{g'}\deg(B)\cdot\deg(B^{\perp}).
\]
\end{lemma}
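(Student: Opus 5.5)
First I would identify $f^*L$ by the theorem of the cube, or equivalently through Riemann forms via Appell--Humbert, and then obtain the degree identity by computing top self-intersections on both sides of $f$.

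For the isomorphism, it suffices to show that $f^*L$ and $p_1^*L_B\otimes p_2^*L_{B^\perp}$ have the same restriction to $B\times\{0\}$, to $\{0\}\times B^\perp$, and the same Riemann form, up to a translation that can be absorbed. On rational homology, $H_1(B\times B^\perp,\QQ)=\Lambda_{B,\QQ}\oplus\Lambda_{B^\perp,\QQ}$, and $f_*$ is the sum map into $\Lambda_\QQ$. The Riemann form of $f^*L$ is therefore
\[
(x_1+y_1,x_2+y_2)\mapsto E(x_1+y_1,x_2+y_2).
\]
By \eqref{eq:perp} the cross terms $E(x_1,y_2)$ and $E(y_1,x_2)$ vanish, so this form equals $E|_{\Lambda_B}\oplus E|_{\Lambda_{B^\perp}}$, which is the Riemann form of $p_1^*L_B\otimes p_2^*L_{B^\perp}$. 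Hence the two bundles differ by an element of $\mathrm{Pic}^0$.

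To pin down this element, I restrict both bundles to $B\times\{0\}$ and to $\{0\}\times B^\perp$: each side gives $L_B$ and $L_{B^\perp}$ respectively. A line bundle in $\mathrm{Pic}^0(B\times B^\perp)=\mathrm{Pic}^0(B)\times\mathrm{Pic}^0(B^\perp)$ is determined by these two restrictions, so the difference is trivial. Equivalently, the seesaw principle applied to the semi-characters of the Appell--Humbert data gives the same conclusion, since the semi-character of $f^*L$ is $\chi(x+y)$ and it factors because $\chi(x+y)=\chi(x)\chi(y)e^{\pi i E(x,y)}$ with $E(x,y)=0$.

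For the degree formula, I take top self-intersections. On the left,
\[
(f^*L)^g=\deg(f)\,(L^g)=\deg(f)\deg(A).
\]
On the right, expanding the $g$-th power, the only surviving term is
\[
\binom{g}{g'}(L_B^{g'})(L_{B^\perp}^{g-g'})=\binom{g}{g'}\deg(B)\deg(B^\perp).
\]
The kernel of $f$ is $\{(x,-x):x\in B\cap B^\perp\}\cong B\cap B^\perp$, a finite group scheme, so $\deg f=\#(B\cap B^\perp)$ (we are over $\CC$). Finally, $\deg(B\cap B^\perp)$, as the degree of a zero-dimensional scheme, is exactly its cardinality, which gives the identity.

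The only delicate step is the $\mathrm{Pic}^0$ bookkeeping in the isomorphism. The orthogonality $E(\Lambda_B,\Lambda_{B^\perp})=0$ handles the Néron–Severi part, and the restriction or semi-character argument handles the rest; if $L$ is not symmetric one should check that the restrictions match on the nose, which they do because $f|_{B\times 0}$ is the inclusion $\iota_B$.
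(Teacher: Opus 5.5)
Your proof is correct, but you obtain the isomorphism $f^{*}L\cong p_1^{*}L_B\otimes p_2^{*}L_{B^{\perp}}$ by a different route from the paper.

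\textbf{Your route.} You argue analytically, in two stages.
\begin{enumerate}
\item Via Appell--Humbert, the Riemann forms agree because the cross terms vanish by \eqref{eq:perp}.
\item Hence the two bundles differ by an element of $\mathrm{Pic}^0(B\times B^{\perp})=\mathrm{Pic}^0(B)\times\mathrm{Pic}^0(B^{\perp})$. That element is trivial because both bundles restrict to $L_B$ and $L_{B^{\perp}}$ on the two axes. Your semi-character remark gives the same conclusion in one line.
\end{enumerate}

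\textbf{The paper's route.} The paper stays algebraic and never separates the N\'eron--Severi and $\mathrm{Pic}^0$ parts. It writes $\mathrm{add}^{*}L=\calL\otimes p_1^{*}L\otimes p_2^{*}L$ with $\calL\cong(\id_A,\lambda)^{*}\calP_A$. The universal property of the Poincar\'e bundle then gives
\[
(\iota_B,\iota_{B^{\perp}})^{*}\calL\cong(\id_B,\iota_B^{\vee}\circ\lambda\circ\iota_{B^{\perp}})^{*}\calP_B,
\]
which is trivial precisely by the algebraic definition of orthogonality. This kills the whole cross term in one step.

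\textbf{Comparison.} The paper's argument is characteristic-free and uses orthogonality in the form $\iota^{\vee}\lambda\iota=0$. Yours relies on the complex uniformization and the homological form $E(\Lambda_{B,\QQ},\Lambda_{B^{\perp},\QQ})=0$. The price is the extra $\mathrm{Pic}^0$ bookkeeping, which you carry out correctly. The degree computation is the same in both. You additionally justify $\deg f=\#(B\cap B^{\perp})=\deg(B\cap B^{\perp})$ through the kernel $\{(x,-x)\}$ and reducedness of finite group schemes in characteristic $0$; the paper simply asserts this.
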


\begin{proof}
    Let $\calP_A$ denote the Poincar\'e bundle on $A \times A^\dual$. The line bundle $\calL:=\mathrm{add}^*L \otimes p_1^*L^{-1} \otimes p_2^*L^{-1}$ on $A\times A$, where $\mathrm{add}: A \times A \rightarrow A$ is the addition map defined by $(x,y)\mapsto x+y$, is a divisorial correspondence. By regarding $\calL$ as a family of line bundles parametrized by the second component and noticing that $\calL|_{A\times \{ x \}}$ is nothing else than $t_x^*L \otimes L^{-1}$, the universal property of $\calP_A$ induces the unique map $(\id_A, \lambda): A\times A\rightarrow A\times A^\dual$ satisfying $\calL \cong (\id_A, \lambda)^*\calP_A.$
    
    Let us now consider the embedding $(\iota_1,\iota_2): B \times B^\perp \hookrightarrow A \times A$; we first prove that $(\iota_1, \iota_2)^*\calL \cong \calO$. To show this, let us first note that, on $B\times A^\dual$, we have
    \[
    (\iota_1,\id_{A^\dual})^*\calP_A \cong (\id_B, \iota_1^\dual)^* \calP_B
    \]
    by the universal properties of $\calP_A,\calP_B$. Thus, using that $\iota_1^\dual\circ \lambda\circ\iota_2 = 0$, we get
    \[
    \begin{split}
    (\iota_1, \iota_2)^* \calL &= (\iota_1, \lambda\circ\iota_2)^* \calP_A \\
    & = (\id_B, \lambda\circ\iota_2)^* (\iota_1,\id_{A^\dual})^* \calP_A\\
    & = (\id_B, \lambda\circ\iota_2)^* (\id_B, \iota_1^\dual)^* \calP_B\\
    & = (\id_B, \iota_1^\dual \circ \lambda\circ \iota_2) ^*\calP_B = (\id_B, 0)^*\calP_B \cong \calO,
    \end{split} 
    \]
    proving the claim. Using this, we have
    \[
    f^*L = (\mathrm{add}\circ (\iota_1,\iota_2))^* L = (\iota_1,\iota_2)^*(\calL\otimes p_1^*L\otimes p_2^*L) \cong p_1^*L_B \otimes p_2^*L_{B^\perp},
    \]
    giving the first part. 

    For the second statement, the projection formula and the fact that $f$ is finite of degree $\deg(B\cap B^{\perp})$ give
    \[
    \deg(B\cap B^{\perp})\cdot\deg(A)=\bigl((f^{*}L)^{g}\bigr) =\Bigl(\bigl(p_1^{*}L_B\otimes p_2^{*}L_{B^{\perp}}\bigr)^{g}\Bigr) =\binom{g}{g'}\bigl(L_B^{g'}\bigr)\bigl(L_{B^{\perp}}^{g-g'}\bigr).
    \]
    All terms except $k=g'$ vanish: if $k>g'$, we have $(p_1^{*}L_B)^{k}=0$, while if $k<g'$, the power on $B^\perp$ exceeds $g-g'$.
\end{proof}

We now bound the degree of the full scheme-theoretic kernel of an endomorphism in terms of its Weil norm.

\begin{prop}\label{prop:degreeBound}
Let $\gamma\in\End(A)$ be nonzero and set $d=\dim(\image\gamma)$.  Then
\begin{equation}\label{eq:degkeridentity}
    \deg(\ker\gamma)\cdot\deg(\image\gamma) =\binom{g}{d}^{-1}\deg(A)\prod_{i=1}^{2d}\sigma_i,
\end{equation}
where $\sigma_1,\dots,\sigma_{2d}$ are the nonzero singular values of $\rho_r(\gamma)$ with respect to $S$.  Consequently, we have
\[
\deg(\ker\gamma)\le \frac{(g-d)!}{g!}\cdot\frac{\deg(A)}{(2d)^{d}}\lVert\gamma\rVert^{2d} \le\frac{\deg(A)}{2g}\max\{1,\lVert\gamma\rVert\}^{2g}.
\]
\end{prop}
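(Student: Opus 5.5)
Set $B:=\image\gamma$ and $C:=\ker^0\gamma$. We have $\dim B=d$ and $\dim C=g-d$ by \Cref{lem:dictionary}(1). The strategy is to compare two lattice computations of the degree of a single isogeny, and then convert lattice covolumes into degrees using \Cref{lem:covolumeDegree} and \Cref{lem:polarizedMorphism}.

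\textbf{Reduction to $C^\perp\to B$.} Let $C^\perp$ be the orthogonal complement of $C$. By \Cref{lem:orthogonality}(1), $C^\perp=\image\gamma^\dagger$, and $\dim C^\perp=d$. Restricting $\gamma$ gives an isogeny $\gamma|_{C^\perp}:C^\perp\to B$, since its kernel is finite and the dimensions agree. The full kernel of $\gamma$ is then controlled by two pieces:
\begin{itemize}
\item the component group, $\deg(\ker\gamma)=[\ker\gamma:C]\cdot\deg C$;
\item the finite group $\ker\gamma\cap C^\perp$, through the addition isogeny $C\times C^\perp\to A$.
\end{itemize}
Concretely, $\ker\gamma/(C+(\ker\gamma\cap C^\perp))$ and $C\cap C^\perp$ are related through $f:C\times C^\perp\to A$. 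One checks that
\[
[\ker\gamma:C]\cdot\#(C\cap C^\perp)=\#\ker(\gamma|_{C^\perp}).
\]
To see this, pull $\ker\gamma$ back under $f$: one gets $C\times\ker(\gamma|_{C^\perp})$, of degree $\#(C\cap C^\perp)$ over $\ker\gamma$.

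\textbf{The lattice side.} The degree of $\gamma|_{C^\perp}$ is $[\Lambda_B:\gamma_*\Lambda_{C^\perp}]$. This equals
\[
\covol(\gamma_*\Lambda_{C^\perp})/\covol(\Lambda_B),
\]
with covolumes taken in $V_B$ for $S$. The covolume of the image is $\covol(\Lambda_{C^\perp})\cdot|\det(T|_{V_{C^\perp}}:V_{C^\perp}\to V_B)|$, where $T=\rho_r(\gamma)$. Since $V_{C^\perp}=(\ker T)^{\perp_S}$ (by \Cref{lem:orthogonality}) and $V_B=\image T$, this determinant is exactly $\prod_{i=1}^{2d}\sigma_i$. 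Hence
\[
\#\ker(\gamma|_{C^\perp})=\frac{\covol(\Lambda_{C^\perp})}{\covol(\Lambda_B)}\prod\sigma_i .
\]

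\textbf{Converting to degrees.} By \Cref{lem:covolumeDegree}, $\covol(\Lambda_{C^\perp})/\covol(\Lambda_B)=\deg C^\perp/\deg B$. By \Cref{lem:polarizedMorphism} applied to $C$,
\[
\deg C\cdot\deg C^\perp=\binom{g}{d}^{-1}\deg A\cdot\#(C\cap C^\perp).
\]
Combining these three displayed relations gives
\[
\deg(\ker\gamma)\deg B=[\ker\gamma:C]\deg C\deg B=\binom gd^{-1}\deg A\prod\sigma_i,
\]
which is \eqref{eq:degkeridentity}.

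\textbf{The inequality.} By AM--GM,
\[
\prod_{i=1}^{2d}\sigma_i\le\Bigl(\tfrac{1}{2d}\sum\sigma_i^2\Bigr)^d=\frac{\lVert\gamma\rVert^{2d}}{(2d)^d},
\]
using \eqref{eq:posdef}. For the remaining factor, $\deg B=d!\,\covol(\Lambda_B)\ge 1$ is not automatic. However, $\deg B=(L|_B^d)$ is a positive integer because $L|_B$ is ample, so $\deg B\ge1$. This yields the constant $\binom gd^{-1}=\frac{d!(g-d)!}{g!}$. The stated bound has $(g-d)!/g!$, which is weaker, since $d!\ge1$ in the other direction; one should double-check which is intended. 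Either way it is dominated as claimed.

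The final inequality follows by checking, for $1\le d\le g$, that
\[
\frac{(g-d)!}{g!(2d)^d}\le\frac1{2g}.
\]
This holds at $d=1$ with equality, and for larger $d$ the left side only decreases. Also $\lVert\gamma\rVert^{2d}\le\max\{1,\lVert\gamma\rVert\}^{2g}$.

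The main obstacle is the index bookkeeping in the first step: the relation between components of $\ker\gamma$, $\ker(\gamma|_{C^\perp})$ and $C\cap C^\perp$. It must be done scheme-theoretically and carefully via $f$.
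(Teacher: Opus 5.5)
Your proof of the identity \eqref{eq:degkeridentity} is correct and is the paper's argument up to renaming. Your $\image\gamma$, $\ker^0\gamma$ and $(\ker^0\gamma)^\perp=\image\gamma^\dagger$ are the paper's $C$, $B^\perp$ and $B=\image(\gamma^\dagger\gamma)$. Both proofs compute the degree of the isogeny $(\ker^0\gamma)^\perp\to\image\gamma$ via singular values and covolumes, count the components of $\ker\gamma$ through the addition isogeny, and conclude with \Cref{lem:polarizedMorphism}.

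\textbf{The gap is in the first inequality.} Using only $\deg(\image\gamma)\ge 1$, you obtain the constant $\binom{g}{d}^{-1}=d!\,(g-d)!/g!$. You then call the stated constant $(g-d)!/g!$ ``weaker'', but it is the reverse: $(g-d)!/g!\le d!\,(g-d)!/g!$. So the statement is stronger than what you proved by a factor $d!$, and your argument does not establish it.

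The missing input is $\deg(\image\gamma)\ge d!$, which is exactly what the paper uses. By Riemann--Roch on the abelian variety $\image\gamma$, $\deg(\image\gamma)=d!\,\chi(L|_{\image\gamma})$. The Euler characteristic of an ample line bundle on an abelian variety is a positive integer. In your covolume language this is also automatic, contrary to your remark. The $S$-covolume of $\Lambda_{\image\gamma}$ equals the square root of the Gram determinant of the integral nondegenerate alternating form $E$ on $\Lambda_{\image\gamma}$, that is, the product of its elementary divisors. Hence it is a positive integer, and \Cref{lem:covolumeDegree} gives $\deg(\image\gamma)\ge d!$.

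The final inequality $\deg(\ker\gamma)\le\frac{\deg(A)}{2g}\max\{1,\lVert\gamma\rVert\}^{2g}$ would survive even with your weaker constant. Indeed, $c'(d):=d!\,(g-d)!/\bigl(g!\,(2d)^d\bigr)$ satisfies $c'(1)=1/(2g)$ and $c'(d+1)/c'(d)=\frac{1}{2(g-d)}\bigl(\frac{d}{d+1}\bigr)^d<1$ for $1\le d<g$. However, what you wrote asserts, without computing any ratio, the decrease of the other function $(g-d)!/\bigl(g!\,(2d)^d\bigr)$, not the one your chain of inequalities actually needs.
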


\begin{proof}
Let $\alpha:=\gamma^{\dagger}\circ\gamma$, which is symmetric, let $B:=\image\alpha$ and $C:=\image\gamma$, and let $B^{\perp}$ be the orthogonal complement of $B$, which equals $\ker^{0}\alpha=\ker^{0}\gamma$ by \Cref{lem:orthogonality}. Thus $\dim B=\dim C=d$, and $\gamma$ restricts to an isogeny $\gamma_B:B\to C$.

Write $T:=\rho_r(\gamma)$. By \eqref{eq:adjointS} the operator $T^{*}T$ is self-adjoint and positive semidefinite for $S$, so it is diagonalizable with nonnegative real eigenvalues; its kernel is $\ker T$ by \Cref{lem:orthogonality}, so exactly $2d$ of these eigenvalues are positive. Denote by $\sigma_1,\dots,\sigma_{2d}>0$ their square roots, that is, the nonzero singular values of $T$ for $S$. Then
\begin{equation}\label{eq:sumsigma}
    \sum_{i=1}^{2d}\sigma_i^{2}=\Tr(T^{*}T) =\Tr(\gamma^{\dagger}\gamma)=\lVert\gamma\rVert^{2}.
\end{equation}
Note that $\Lambda_{B,\mathbb{Q}}=\image(T^{*}T)$ is the $S$-orthogonal complement of $\ker T$, since $T^{*}T$ is $S$-self-adjoint.
 
%\emph{Step 1: the degree of $\gamma_B$.}  
Since $T\Lambda\subseteq\Lambda$ and $\Lambda_C=\Lambda\cap\Lambda_{C,\mathbb{Q}}$, we have $T\Lambda_B\subseteq\Lambda_C$, and $\deg\gamma_B=[\Lambda_C:T\Lambda_B]$.  The restriction of $T$ to $\Lambda_{B,\mathbb{Q}}$ has singular values $\sigma_1,\dots,\sigma_{2d}$ for $S$, hence multiplies $2d$-dimensional $S$-volume by $\prod_{i}\sigma_i$.  Comparing covolumes inside $V_C$ and applying \Cref{lem:covolumeDegree}, whose factorials cancel because $\dim B=\dim C=d$, we obtain
\begin{equation}\label{eq:deggammaB}
\deg\gamma_B =\frac{\covol(T\Lambda_B)}{\covol(\Lambda_C)} =\Bigl(\prod_{i=1}^{2d}\sigma_i\Bigr)\frac{\deg(B)}{\deg(C)} .
\end{equation}
 
%\emph{Step 2: the degree of $\ker\gamma$.}  
Consider the isogeny $f:B\times B^{\perp}\to A$, $(x,y)\mapsto x+y$, of degree
$\deg(B\cap B^{\perp})$. Since $\gamma$ vanishes on $B^{\perp}$ we have $\ker(\gamma\circ f)=\ker(\gamma_B)\times B^{\perp}$, which has $\deg\gamma_B$ connected components; as $f$ is surjective with kernel of order $\deg(B\cap B^{\perp})$, the number $N$ of connected components of $\ker\gamma$ is $N=\deg\gamma_B/\deg(B\cap B^{\perp})$. Each component is a translate of $\ker^{0}\gamma=B^{\perp}$, and has the same degree. So \Cref{lem:polarizedMorphism} gives
\begin{equation}\label{eq:degkergamma}
    \deg(\ker\gamma)=N\cdot\deg(B^{\perp}) =\deg\gamma_B\cdot\frac{\deg(B^{\perp})}{\deg(B\cap B^{\perp})} =\binom{g}{d}^{-1}\deg\gamma_B\cdot\frac{\deg(A)}{\deg(B)}.
\end{equation}
Substituting \eqref{eq:deggammaB} into \eqref{eq:degkergamma}, the factor $\deg(B)$ cancels and we obtain \eqref{eq:degkeridentity}.
 
%\emph{Step 3: the bounds.}  
By \eqref{eq:sumsigma} and the AM--GM inequality,
\[
\prod_{i=1}^{2d}\sigma_i \le \Bigl(\frac{1}{2d}\sum_{i=1}^{2d}\sigma_i^{2}\Bigr)^{d} = \frac{\lVert\gamma\rVert^{2d}}{(2d)^{d}} ,
\]
while $\deg(\image\gamma)=d!\,\chi(L_C)\ge d!$. Since $\binom{g}{d}^{-1}(d!)^{-1}=(g-d)!/g!$, the identity \eqref{eq:degkeridentity} gives the first inequality.  For the second, set $c(d):=(g-d)!/\bigl(g!\,(2d)^{d}\bigr)$ and note that
\[
\frac{c(d+1)}{c(d)}=\frac{1}{g-d} \cdot\frac{(2d)^{d}}{(2d+2)^{d+1}}<1 \qquad (1 \le d < g).
\]
Hence $c$ is decreasing and $c(d)\le c(1)=1/(2g)$ for every $1\le d\le g$. Finally, since $\lVert\gamma\rVert^{2d} \le \max\{1,\lVert\gamma\rVert\}^{2g}$, the second inequality follows.
\end{proof}

\begin{remark}\label{rmk:sharp}
(1) Equality in the first inequality of \Cref{prop:degreeBound} holds if and only if all nonzero singular values of $\rho_r(\gamma)$ coincide and the restriction $L|_{\operatorname{im}\gamma}$ induces a principal polarization on $\image\gamma$; for example, it holds for $A=E_1\times E_2$ a product of principally polarized elliptic curves and $\gamma$ the projection onto $E_1$.

(2) Even for a fixed polarized abelian variety, \(\deg(\ker\gamma)\) need not tend to infinity with \(\|\gamma\|\). In particular, there is no lower bound depending only on \(\|\gamma\|\) that tends to infinity with the norm. For example, if $A=E\times E$ where $E$ is an elliptic curve and $n$ is a positive integer, take 
\[
\gamma_n = \begin{bmatrix}
1 & n \\
n & n^2+1
\end{bmatrix}
\in \End(E\times E).
\]
Then one checks that $\ker \gamma_n$ is always trivial while $\lVert\gamma_n\rVert$ grows with $n$.
\end{remark}

\section{Abelian schemes}
From now on, we focus on abelian varieties in families. This section recalls the necessary facts from the bi-algebraic geometry of the universal abelian scheme, most notably Gao's \emph{mixed Ax--Schanuel theorem} \cite{Gao_mixed}; see the survey by Klingler, Ullmo and Yafaev \cite{KUY_survey} for an elegant introduction to bi-algebraic geometry. We define the \emph{degeneracy locus} following Gao, which plays a crucial role in the proof of our main theorem through the height bound established by the third-named author in \cite{Ge_IntersectI}. Then we discuss the rational representation in the family setting. Finally, we prove a uniform bound for integral representatives of Betti cosets, which will be used in the proof of \Cref{thm:main}.

\subsection{Bi-algebraic geometry and Ax--Schanuel}\label{subsec:Bialg}
Fix $g\geq 1$. Let $\bA_g$ be the complex fine moduli space of principally polarized abelian varieties with a symplectic level-$3$-structure. It is known that $\bA_g$ is a smooth irreducible quasi-projective variety defined over the number field $\QQ[\zeta_3]$. By our convention of number fields $\QQ(\zeta_3)\subseteq\CC$, there is a variety $\AAA_g$ over $\QQ(\zeta_3)$ with $\bA_g\cong \AAA_g\times_{\QQ(\zeta_3)}\CC.$ Let $\pi_g:\gothA_g\rightarrow \bA_g$ be the universal abelian scheme.

The following two transcendental uniformization maps will be relevant.
\begin{enumerate}
    \item The analytification $\bA_g^\textnormal{an}$ admits a uniformization $u_g:\gothH_g \to \bA_g^\mathrm{an}$ from the Siegel upper half space $\gothH_g:=\{Z \in \Mat_g(\CC): Z=Z^t, \ \textnormal{Im}(Z)>0\}$.
    
    \item The analytification $\gothA_g^\mathrm{an}$ admits a uniformization $u_{g,a}:\CC^g\times \gothH_g\rightarrow \gothA_g^\mathrm{an}$.
\end{enumerate}

We endow the universal covers with an algebraic structure as follows: an \emph{irreducible algebraic subvariety} of $\gothH_g$ (resp.\ of $\CC^g\times \gothH_g$) is an irreducible analytic component of $U\cap\gothH_g$ (resp.\ of $\calU\cap(\CC^g\times \gothH_g)$), where $U$ (resp.\ $\calU$) is an algebraic subvariety of the affine spac $\Sym_g(\CC)$ (resp.\ $\CC^g\times \Sym_g(\CC)$); here we use the inclusion of $\gothH_g$ as an open subset of $\Sym_g(\CC)$ in the complex-analytic topology. Bi-algebraic geometry \cite{KUY_survey} studies the interplay of these two algebraic structures, on source and target, across such a transcendental map.

\begin{defn}
An irreducible algebraic subvariety $Y \subseteq \bA_g$ (resp. $\calY \subseteq \gothA_g$) is \emph{bi-algebraic} if one, equivalently every, complex-analytic irreducible component of $u_g^{-1}(Y^\mathrm{an})$ (resp. $u_{g,a}^{-1}(\calY^{\mathrm{an}})$) is algebraic. A \emph{bi-algebraic subset} is a finite union of bi-algebraic subvarieties.
\end{defn}

The class of bi-algebraic subsets is closed under intersection, so for any irreducible subset $Z$ there is a smallest bi-algebraic subvariety containing it, obtained as the intersection of all bi-algebraic subvarieties containing $Z$. We call it the \emph{bi-algebraic closure} of $Z$ and denote it $Z^{\mathrm{biZar}}$.

Ullmo and Yafaev~\cite{UY_special} classify the irreducible bi-algebraic subvarieties of $\bA_g$ as the \emph{weakly special} subvarieties (we omit the definition, as it will not be used directly). Gao~\cite{Gao_APZ} shows further that an irreducible bi-algebraic subvariety $\calY$ of $\gothA_g$ has the form
\[
\calY = \calB+ \sigma_c
\]
where the base $S=\pi_g(\calY)$ is bi-algebraic in $\bA_g$, $\calB$ is an abelian subscheme of $\gothA_g|_{S}$, and $\sigma_c$ is a constant multi-section of $\gothA_g|_{S}$ in the sense of \cite[Def.~2.4]{Ge_IntersectI}.

The mixed Ax--Schanuel theorem, in this case established by Gao~\cite{Gao_mixed}, states informally that ``atypical'' intersections between algebraic subvarieties of $(\CC^g\times \gothH_g)\times \gothA_g^\mathrm{an}$ and the graph of $u_{g,a}$ are controlled by bi-algebraic subvarieties. Rather than state the general theorem, we record the consequence we shall use.

\begin{prop}\label{prop:AxScCor}
    Let $\calU\subseteq \CC^g\times \gothH_g$ be a closed irreducible analytic subset. Then,
    \[
    \rdim \calU^\mathrm{Zar} - \dim \calU \geq \rdim u_{g,a}(\calU)^\mathrm{biZar} - \dim u_{g,a}(\calU)^\mathrm{Zar},
    \]
    where $\calU^{\mathrm{Zar}}$ is the Zariski closure of $\calU$ in the algebraic structure of the cover, $\rdim\calU^{\mathrm{Zar}}$ is taken over $\gothH_g$, and $\rdim u_{g,a}(\calU)^\mathrm{biZar}$ over $\bA_g$.
\end{prop}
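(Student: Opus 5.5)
The plan is to deduce \Cref{prop:AxScCor} from Gao's mixed Ax--Schanuel theorem \cite{Gao_mixed} applied to the graph of $u_{g,a}$. Set $\calY:=u_{g,a}(\calU)^{\mathrm{Zar}}\subseteq\gothA_g$ and $\tilde{\calU}:=\calU^{\mathrm{Zar}}\subseteq\CC^g\times\gothH_g$. Let $\Gamma\subseteq(\CC^g\times\gothH_g)\times\gothA_g^{\mathrm{an}}$ be the graph of $u_{g,a}$. Consider the analytic set $\calZ:=\{(z,u_{g,a}(z)):z\in\calU\}$, which is contained in $\Gamma\cap(\tilde{\calU}\times\calY)$, and let $\calZ'$ be an irreducible component of that intersection containing $\calZ$. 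Then $\dim\calZ'\ge\dim\calU$, since $\calZ\cong\calU$.

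In the form we need, mixed Ax--Schanuel states that for an irreducible component $\calZ'$ of $\Gamma\cap W$, with $W$ algebraic,
\[
\dim W-\dim\calZ'\ \ge\ \dim\bigl(\pr_2\calZ'\bigr)^{\mathrm{biZar}}-\dim\bigl(\pr_2\calZ'\bigr)^{\mathrm{Zar}}+\text{(base correction)}.
\]
The fiberwise version in \cite{Gao_mixed} is phrased over the base: the codimension count is taken relative to $\bA_g$ and $\gothH_g$, which is why relative dimensions appear on both sides. To get the relative statement I would apply Ax--Schanuel in its version for the graph of $u_{g,a}$ over the graph of $u_g$, namely $\dim\calZ'\le\dim W-\dim\gothA_g+\dim(\text{smallest bi-algebraic containing }\pr_2\calZ')$, with the base contributions subtracted.

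I would then take $W=\tilde{\calU}\times\calY$ and note three facts:
\begin{enumerate}
\item $\pr_2\calZ'$ has Zariski closure $\calY$.
\item Its bi-algebraic closure is $\calY^{\mathrm{biZar}}=u_{g,a}(\calU)^{\mathrm{biZar}}$.
\item The base pieces, $\pi_g(\calY)$ and the image of $\tilde{\calU}$ in $\gothH_g$, are compared by the pure Ax--Schanuel theorem for $u_g$ (Mok--Pila--Tsimerman). Their dimension contributions cancel against the difference between $\dim$ and $\rdim$.
\end{enumerate}
The bookkeeping should give
\[
\dim\calU\le\dim\tilde{\calU}+\dim\calY-\dim\calY^{\mathrm{biZar}}-(\text{base terms}).
\]
Rearranging this yields $\rdim\tilde{\calU}-\dim\calU\ge\rdim\calY^{\mathrm{biZar}}-\dim\calY$.

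Alternatively, I would quote directly the "weak Ax--Schanuel in relative form" of Gao \cite{Gao_mixed} (also used in \cite{GH_RMM}, \cite{Ge_IntersectI}). In that form the defect $\rdim\calY^{\mathrm{biZar}}-\dim\calY$ is bounded by the algebraic defect of any analytic lift. That is exactly the displayed inequality, after replacing $\calU$ by a component of $u_{g,a}^{-1}(\calY)\cap\tilde{\calU}$ containing it.

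The main obstacle is the precise matching of relative dimensions. The cover is fibered over $\gothH_g$, not over $\bA_g$, and $\calU^{\mathrm{Zar}}$ need not surject onto a lift of $\pi_g(\calY)$. I would handle this by first reducing to the case where $\calU$ is replaced by the full component of $u_{g,a}^{-1}(\calY)$, which only increases $\dim\calU$ and strengthens the inequality. I would then use that the image of $\tilde{\calU}$ in $\gothH_g$ contains a lift of the base of $\calU$, so that $\dim$ of the base of $\tilde{\calU}$ is at least $\dim\pi_g(\calY)$. This is the inequality needed to pass from absolute to relative dimensions.
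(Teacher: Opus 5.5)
Your choice of tool is right, and the first half works once it is stated correctly. Apply mixed Ax--Schanuel with $W=\calU^{\mathrm{Zar}}\times\calY$, where $\calY:=u_{g,a}(\calU)^{\mathrm{Zar}}$, in the form $\dim W-\dim\calZ'\ge\dim(\pr_2\calZ')^{\mathrm{biZar}}$. (The version you display, $\dim\calZ'\le\dim W-\dim\gothA_g+\dim(\cdot)^{\mathrm{biZar}}$, is misstated: for $W$ a single point of the graph it gives $0\le-\dim\gothA_g$.) This yields the absolute weak Ax--Schanuel inequality
\[
\dim\calU^{\mathrm{Zar}}-\dim\calU\ \ge\ \dim\calY^{\mathrm{biZar}}-\dim\calY ,
\]
with no base correction. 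The genuine gap is in the step you flag yourself, the passage to relative dimensions. Let $p:\CC^g\times\gothH_g\to\gothH_g$ be the projection, and put $b_1:=\dim p(\calU^{\mathrm{Zar}})$ and $b_2:=\dim\pi_g(\calY^{\mathrm{biZar}})$. The proposition is the absolute inequality with $b_1$ subtracted on the left and $b_2$ on the right, so what you need is the \emph{upper} bound $b_1\le b_2$. You instead argue for the lower bound $b_1\ge\dim\pi_g(\calY)$. This points the wrong way, since a larger $b_1$ only makes the left side smaller. It also compares with $\pi_g(\calY)$ rather than with $\pi_g(\calY^{\mathrm{biZar}})$, which can be strictly larger. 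Invoking Mok--Pila--Tsimerman on the base cannot supply the missing term either: subtracting one Ax--Schanuel inequality from another yields no inequality, and the base terms do not ``cancel''.

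Your reduction is also unsound. Replacing $\calU$ by a full component $\calU'$ of $u_{g,a}^{-1}(\calY)$ enlarges $\calU^{\mathrm{Zar}}$ as well as $\dim\calU$, so the inequality for $\calU'$ does not imply the one for $\calU$. Indeed, since $\dim\calU'=\dim\calY$, the statement for $\calU'$ collapses to $\rdim\calU'^{\mathrm{Zar}}\ge\rdim\calY^{\mathrm{biZar}}$, which no longer involves $\calU$ at all. Replacing $\calU$ by a component of $u_{g,a}^{-1}(\calY)\cap\calU^{\mathrm{Zar}}$, as in your alternative paragraph, is harmless but beside the point.

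The missing idea is the elementary observation the paper uses. Since $\calY^{\mathrm{biZar}}$ is bi-algebraic, the irreducible component of $u_{g,a}^{-1}(\calY^{\mathrm{biZar}})$ containing the irreducible set $\calU$ is algebraic, and therefore contains $\calU^{\mathrm{Zar}}$. Since $u_g\circ p=\pi_g\circ u_{g,a}$, this gives $u_g(p(\calU^{\mathrm{Zar}}))\subseteq\pi_g(\calY^{\mathrm{biZar}})$. Because $u_g$ is a local biholomorphism, it follows that $b_1\le b_2$. Combined with the absolute inequality (Gao's weak Ax--Schanuel), this proves the proposition.
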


\begin{proof}
    The statement follows from the weak Ax--Schanuel theorem~\cite[Theorem~3.5]{Gao_mixed}, on noticing that $\pi_g\bigl(u_{g,a}(\calU)^\mathrm{biZar}\bigr)$ contains $u_g\bigl(p(\calU^\mathrm{Zar})\bigr)$, where $p:\CC^g\times \gothH_g\rightarrow \gothH_g$ is the projection.
\end{proof}

\subsection{Weakly special subvarieties and degeneracy loci}\label{subsec:WeaklySpecial}
Let $\pi:\calA\rightarrow S$ be an abelian scheme of relative dimension $g$ over a normal quasi-projective complex variety $S$. For convenience of presentation, we assume throughout this section that $\calA/S$ is equipped with a principal polarization and a symplectic level-$3$-structure. There is then a Cartesian diagram
\begin{equation}\label{eqn:modularMaps}
\begin{tikzcd}
    \calA \arrow{r}{\iota_\calA} \arrow{d}{\pi} \arrow[dr, phantom, "\square"] & \gothA_g \arrow{d}{\pi_g} \\
    S \arrow{r}{\iota_S} & \bA_g
\end{tikzcd}
\end{equation}
where $\iota_S: S\rightarrow \bA_g$ is the period map and $\iota_\calA: \calA\rightarrow \gothA_g$ is the mixed period map. The level-$3$-structure further ensures that $\End(\calA_\eta)=\End(\calA_{\bar \eta})$, where $\calA_\eta$ (resp.\ $\calA_{\bar\eta}$) is the generic (resp.\ geometric generic) fiber. The reader can find the basics of abelian schemes in \cite[\S2]{Ge_IntersectI}.

\begin{defn}\label{defn:ws}
Let $\calW\subseteq\calA$ be an irreducible subvariety with $T=\pi(\calW)$, and let $\eta$ be the generic point of $T$. We call $\calW$ \emph{weakly special} if
\[
\calW=(B+\xi)^{\mathrm{Zar}}
\]
where $B\subseteq\calA_\eta$ is an abelian subvariety and $\xi\in\calA_\eta$ is a constant generic point. Here $+$ denotes the addition on $\calA$.
\end{defn}

    For an irreducible subvariety $\calY$ of $\calA$, the intersection of all weakly special subvarieties containing $\calY$ is again weakly special, and is the smallest such; we call it the \emph{weakly special closure} of $\calY$ and write $\calY^\mathrm{ws}$. We define the \emph{vertical defect} of $\calY$ as
\[
\delta_v(\calY):=\rdim\calY^\mathrm{ws} -\dim \calY .
\]
    Since \Cref{defn:ws} imposes no condition on the base $T$, the weakly special closure has the same image in $S$ as $\calY$, namely $\pi(\calY^{\mathrm{ws}})=\pi(\calY)$; hence $\rdim\calY^{\mathrm{ws}}=\dim\calY^{\mathrm{ws}}-\dim\pi(\calY)$ and
\[
\delta_v(\calY)=\dim\calY^\mathrm{ws} -\dim\pi(\calY)-\dim \calY.
\]
For instance, if $\pi(\calY)$ is a point, $\delta_v$ reduces to the
``geodesic defect'' of \cite{HP_ominimality}; if $\calY$ is a torsion section of $\calA/S$, then $\calY^{\mathrm{ws}}=\calY$ and $\delta_v(\calY)=-\dim S$.

\begin{remark}\label{rmk:ws-vs-mixed-shimura}
Assume $S\subseteq\bA_g$, so that $\calA=\gothA_g|_S$. Our \Cref{defn:ws} differs from the notion of weakly special subvariety standard in the mixed Shimura setting (equivalently, the bi-algebraic subvarieties of $\gothA_g$ by \cite{Gao_APZ}): there one requires in addition that the base $T=\pi(\calW)$ be bi-algebraic in $\bA_g$. Consequently, for an irreducible $\calY\subseteq\calA$ the two closures need not agree. In our terminology, the bi-algebraic closure $\calY^{\biZar}$ is the ``weakly special closure'' of the mixed Shimura literature, while the weakly special closure $\calY^{\ws}$ in our sense can in general be strictly contained in $\calY^{\biZar}$. 
 
The two closures differ only in the base direction: both have the form
$\calB+\sigma_c$ with the \emph{same} abelian subscheme $\calB$,
and constant multi-section $\sigma_t$, and
$\calY^{\ws}$ is an irreducible component of $\mathcal Y^{\mathrm{biZar}}\cap
\pi_g^{-1}\bigl(\pi_g(\mathcal Y)\bigr)$. Moreover, $\pi_g(\mathcal Y^{\mathrm{biZar}}) = \pi_g(\mathcal Y)^{\mathrm{biZar}}$, and
\[
\rdim\calY^{\ws}=\rdim\calY^{\biZar}=\rdim\calB
\]
see \cite[Cor.~5.4]{Gao_Betti}. Therefore the vertical defect can be computed using either closure.
\end{remark}

\begin{defn}\label{defn:t-deg}
    Let $\calX$ be a subvariety of $\calA$. For $t\in\ZZ$, the \emph{$t$-degeneracy locus} of $\calX$ is
    \[
    \calX^{\mathrm{deg}}(t):=\bigcup\calY,
    \]
    the union over positive-dimensional irreducible subvarieties
    $\calY\subseteq \calX$ with $\delta_v(\calY)<t$. We say $\calX$ is \emph{$t$-nondegenerate} if $\calX^\mathrm{deg}(t)$ is not Zariski dense in $\calX$.
\end{defn}

The notion of degeneracy locus was introduced by Gao~\cite{Gao_Betti}, generalizing the (b-)anomalous locus studied by R\'emond~\cite[\S3]{Remond_IntersectionIII} when $S$ is a point and, for tori, by Bombieri, Masser and Zannier~\cite{BMZ_Anomalous, BMZ_plane08}. To fix ideas, suppose that $\calX$ is closed and contained in a single fiber of $\pi$, so that $\pi(\calX)$ is a point. Then every irreducible $\calY\subseteq\calX$ has $\delta_v(\calY)=\dim\calY^\mathrm{ws}-\dim\calY\geq 0$; in particular $\calX^{\deg}(t)=\emptyset$ for $t\leq 0$, and $\calX^{\deg}(1)$ is the union of the positive-dimensional cosets contained in $\calX$, that is, the classical Kawamata--Ueno locus~\cite{Ueno_Class}.

Returning to the general case, we record two properties, both immediate from the definition. First, $\calX^{\deg}(t)\subseteq\calX^{\deg}(t+1)$ for every $t\in\ZZ$, so the loci form an increasing filtration. Second, since $\rdim\calY^{\ws}\le g$ for every $\calY$, one has $\delta_v(\calX)\le g-\dim\calX$. Hence, if $\calX$ is positive-dimensional, then $\calX^{\deg}(t)=\calX$ for all $t>g-\dim\calX$; thus the filtration exhausts $\calX$. Extending the Zariski closedness of the Kawamata--Ueno locus, Gao showed that $\calX^{\deg}(t)$ is Zariski closed for every $t$; see \cite[\S7]{Gao_Betti} when the period map is an inclusion, and \cite[Thm.~5.6]{Ge_IntersectI} for the general case.

\begin{remark}\label{rmk:descent}
    If the abelian scheme $\mathcal A\to S$ and the subvariety $\mathcal X\subseteq\mathcal A$ are defined over a subfield $K\subseteq\CC$, then $\calX^\mathrm{deg}(t)$ is defined over $K$; cf.\ \cite[Prop.~8.1]{Ge_IntersectI}. We therefore speak freely of degeneracy loci for subvarieties of abelian schemes over $K$.
\end{remark}

The following algebraic criterion of Gao is our main tool for testing nondegeneracy.

\begin{theorem}[Gao]\label{thm:Gao_criterion}
Let $S$ be a normal quasi-projective complex variety, and let $\pi:\calA\rightarrow S$ be an abelian scheme of relative dimension $g$ with a symplectic level-$3$-structure. Suppose $\calX\subseteq\calA$ is irreducible, dominant over $S$, and not contained in any strict group subscheme. For any $t\in\ZZ$, the following are equivalent:
\begin{enumerate}
    \item $\calX=\calX^\mathrm{deg}(t)$;
    
    \item there is an abelian subscheme $\calB\subseteq\calA$ of relative dimension $g'$ such that the composition
    \[
    \varphi_{/\calB}:\calA\longrightarrow\calA/\calB\longrightarrow\gothA'
    \]
    of the fibrewise quotient $\calA\to\calA/\calB$ with a mixed period map $\calA/\calB\to\gothA'$ of the quotient abelian scheme satisfies
    \[
    \dim\varphi_{/\calB}(\calX)<\dim\calX-\max\{0,\,g'-t\}.
    \]
\end{enumerate}
Here $\gothA'$ denotes a universal family for $\calA/\calB$; the image dimension $\dim\varphi_{/\calB}(\calX)$ is intrinsic to $\calA/\calB$ and independent of the choice of $\gothA'$ and of the period map into it.
\end{theorem}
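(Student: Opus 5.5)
Gao's criterion (\Cref{thm:Gao_criterion}) is cited from \cite{Gao_Betti}. A self-contained argument would run as follows.

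For $(2)\Rightarrow(1)$, fix $\calB$ as in (2) and consider the restriction $\varphi:=\varphi_{/\calB}|_\calX:\calX\to\varphi(\calX)$. Its generic fibre has dimension $\dim\calX-\dim\varphi(\calX)>\max\{0,g'-t\}$. Let $\calY$ be an irreducible component of a general fibre $\varphi^{-1}(p)$, so $\dim\calY>\max\{0,g'-t\}$; in particular $\calY$ is positive-dimensional. Points of $\gothA'$ record the isomorphism class of the quotient fibre together with a point on it. Hence $\calY$ lies in a set of the form $\calB|_T+\sigma$, where $T=\pi(\calY)$ and $\sigma$ is a constant (multi-)section of $\calA/\calB$ lifted to $\calA$. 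After a finite base change, the level structure lets one split off the constant part, and $\calB|_T+\sigma$ becomes weakly special in the sense of \Cref{defn:ws}. This gives $\rdim\calY^{\ws}\le g'$, and therefore $\delta_v(\calY)\le g'-\dim\calY<t$. The fibres of $\varphi$ cover a dense open subset of $\calX$, and $\calX^{\deg}(t)$ is closed. Hence $\calX^{\deg}(t)=\calX$.

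For $(1)\Rightarrow(2)$, assume $\calX=\calX^{\deg}(t)$. Then a general point of $\calX$ lies on some positive-dimensional $\calY$ with $\delta_v(\calY)<t$. The first step is a countability and Baire argument. Abelian subschemes of $\calA$, taken up to finite base change, form a countable set, so one abelian subscheme $\calB$ of relative dimension $g'$ occurs as the ``$\calB$'' of $\calY^{\ws}$ (see \Cref{rmk:ws-vs-mixed-shimura}) for a family of such $\calY$ covering a dense subset of $\calX$. Each such $\calY$ maps under $\varphi_{/\calB}$ into the image of $(\calB+\sigma_c)$, which is a finite set of points of $\gothA'$. Thus $\calY$ is contained in a fibre of $\varphi_{/\calB}|_\calX$. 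The fibres through a general point then have dimension at least $\dim\calY>\max\{0,g'-t\}$: here $\dim\calY>g'-t$ because $\delta_v(\calY)=g'-\dim\calY<t$. This yields (2).

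The main obstacle is the case where these $\calY$ do not dominate $S$ and $\pi(\calY)$ varies. There one must check that $\calY^{\ws}$ really collapses to a point under $\varphi_{/\calB}$, i.e.\ that the constant part over $\pi(\calY)$ is constant in the modular sense relative to $\calA/\calB$. This is exactly where the mixed period map, rather than the plain quotient, is needed. I would handle it via the bi-algebraic description $\calB+\sigma_c$ of \cite{Gao_APZ} and the comparison of closures in \Cref{rmk:ws-vs-mixed-shimura}. The hypothesis that $\calX$ is not contained in a strict group subscheme would be used to exclude the degenerate choice $\calB=\calA$ with $t>0$, and to ensure that the inequality is the stated strict one.
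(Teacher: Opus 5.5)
Your $(2)\Rightarrow(1)$ argument is essentially fine. The gap is in $(1)\Rightarrow(2)$, at the claim that $\varphi_{/\calB}$ maps $\calY^{\ws}=\calB|_T+\sigma_c$, with $T=\pi(\calY)$, to finitely many points of $\gothA'$. The mixed period map of $\calA/\calB$ also records the moduli of the quotient fibre, so $\varphi_{/\calB}(\calY^{\ws})$ surjects onto the image of $T$ in the moduli space of $\calA/\calB$. This image is finite only if $(\calA/\calB)|_T$ is isotrivial, and nothing in the definition of $\calY^{\ws}$ forces that. Passing to $\calY^{\mathrm{biZar}}$ via \Cref{rmk:ws-vs-mixed-shimura} does not help either, since a torsion section over a weakly special base is already bi-algebraic.

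A test case shows the step is genuinely false. Let $S\subseteq\bA_g$ be smooth of dimension $1\le d\le g$ with geometrically simple generic fibre, let $\calX=\sigma(S)$ for a torsion section $\sigma$, and let $t=0$. Then $\delta_v(\calX)=-d<0$, so (1) holds. The only abelian subschemes are $0$ and $\calA$. For $\calB=0$, $\dim\varphi_{/0}(\calX)=d$, which is not $<d$. For $\calB=\calA$, one would need $0<d-g$. So (2) fails. Your argument, run with $\calY=\calX$ and $\calB=0$, would nevertheless output (2). This $\calX$ violates the hypothesis, but that is the point: the hypothesis that $\calX$ lies in no strict group subscheme must enter exactly at this step. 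Its role is to exclude degeneracy coming from torsion in a quotient that varies along $\pi(\calY)$. It is not there to exclude $\calB=\calA$, which is a perfectly legitimate choice in (2).

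The countability step has a related defect. The abelian subscheme in $\calY^{\ws}$ lives only over $\pi(\calY)$; it may come from extra endomorphisms on a Hodge locus and need not extend to $S$. So countability of abelian subschemes of $\calA$ does not apply to it.

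What is missing is the actual engine of the criterion. The paper does not reprove it but cites \cite{Gao_Survey} and \cite[Thm.~5.7]{Ge_IntersectI}. The engine is a finiteness theorem, deduced from mixed Ax--Schanuel \cite{Gao_mixed} together with o-minimal definability. It exhibits $\calX^{\deg}(t)$ as a finite union of fibre-dimension loci of fixed projections of mixed Shimura type, each defined on a special subvariety of $\gothA_g$. The argument then runs as follows:
\begin{enumerate}
  \item If $\calX=\calX^{\deg}(t)$, one of these loci is all of $\calX$.
  \item Since $\calX$ dominates $S$ and lies in no strict group subscheme, the corresponding special subvariety must be the full universal family over a special subvariety containing $\iota_S(S)$. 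It cannot be a torsion translate of a smaller abelian scheme, which is exactly the situation in the test case above.
  \item The projection then restricts on $\calA$ to $\varphi_{/\calB}$ for an abelian subscheme $\calB$ of $\calA/S$.
  \item The fibre dimension of this projection gives the inequality in (2).
\end{enumerate}
A countability and Baire-category argument alone cannot supply this uniformity.
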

\begin{proof}
    See \cite[Thm.~2.4]{Gao_Survey} for the case $S\subseteq\bA_g$, and \cite[Thm.~5.7]{Ge_IntersectI} for the generalization.
\end{proof}

\subsection{Relative representations and Betti coordinates}\label{subsec:rrr}
We extend the notion of analytic and rational representations presented in \Cref{subsec:rationalRepresentation} to the relative setting. Let $\pi:\calA \to S$ be as above. Denote by $\calV:=\Lie(\calA/S)$ the Lie algebra, a holomorphic vector bundle of rank $g$ on $S$, and by $\Lambda_S:=(R^1\pi_*\ZZ)^\dual$ the local system of periods, a locally constant sheaf of free $\ZZ$-modules of rank $2g$ embedded fibrewise as a lattice in $\calV$.  The fibre-wise analytic representation induces an injective homomorphism
\[
\rho_a:\End(\calA/S) \hookrightarrow \End(\calV)
\]
into the endomorphisms of the vector bundle, which we call the \emph{relative analytic representation} of $\End(\calA/S)$.  Meanwhile, an endomorphism $\phi\in \End(\calA/S)$ induces $\phi_*:\Lambda_S \to \Lambda_S$ that uniquely determines $\phi$, thus defining an injective homomorphism of algebras
\[
\rho_r:\End(\calA/S) \hookrightarrow \End(\Lambda_S)
\]
which we call the \emph{relative rational representation} of $\End(\calA/S)$.

\begin{remark}\label{rmk:rrrMatrix}
Let $U$ be a nonempty connected complex manifold with a map $U\rightarrow S^\mathrm{an}$, over which both pullbacks $\calV_U$ and $\Lambda_U$ are trivial. Fix a holomorphic trivialization $\calV_U\simeq \CC^g\times U$ and a flat trivialization $\Lambda_U\simeq \ZZ^{2g}\times U$. The resulting fibrewise period matrices give rise to a holomorphic \emph{period map}
\[
\Pi:U \to \Mat_{g\times 2g}(\CC),
\]
whose columns are the coordinates of a $\ZZ$-basis of the period lattice of $\calA_s$. Let $\phi\in \End(\calA/S)$. The trivialization on $\calV_U$ represents $\rho_a(\phi)$ by a holomorphic function $M_a(\phi,\cdot):U \to \Mat_g(\CC)$, while the flat trivialization on $\Lambda_U$ represents $\rho_r(\phi)$ by a matrix $M_r(\phi) \in \Mat_{2g}(\ZZ)$ that is \emph{constant} in $s$: a morphism of local systems is flat, so in a flat trivialization over the connected set $U$ it is a single integer matrix, determined by its value $M_r(\phi)$ under the trivialization of $\Lambda_{s_0}\simeq\ZZ^{2g}$ at any base point $s_0\in U$ and propagated to $\Lambda_U$ by continuity. In particular, the relative rational representation induces an injective homomorphism
\[
M_r=M_{r,U}=M_{r,s_0}:\End(\calA/S) \hookrightarrow \Mat_{2g}(\ZZ).
\]
Fibrewise, we have the compatibility
\begin{equation}\label{eqn:relativeCompatibility}
    M_a(\phi,s)\,\Pi(s) = \Pi(s)\,M_r(\phi) \qquad \textnormal{for any } s \in U,
\end{equation}
which is the relative form of \eqref{eqn:compatibility}.
\end{remark}

Consider the universal abelian scheme $\pi_g:\gothA_g\rightarrow \bA_g$ and its pullback, denoted as $\mathbb{T}_g$, to the Siegel upper half space $\gothH_g$:
\[
\begin{tikzcd}
    \mathbb{T}_g \arrow[r] \arrow[d, "\tilde\pi_g"]\arrow[dr, phantom, "\square"] & \gothA_g^{\mathrm{an}} \arrow[d]\\
    \gothH_g \arrow[r] & \bA_g^\mathrm{an}
\end{tikzcd}.
\]
The universal cover $\CC^g\times \gothH_g$ of $\gothA_g^\mathrm{an}$ is identified with $\Lie(\mathbb{T}_g/\gothH_g)$. Let $\Lambda_{\gothH_g}:=(R^1\tilde{\pi}_{g*}\ZZ)^\dual$ be the local system of periods naturally embedded in $\CC^g\times \gothH_g$. Throughout we fix the trivialization of $\Lambda_{\gothH_g}$ to $\ZZ^{2g}\times \gothH_g$ given by
\begin{equation}\label{eqn:trivializationUnivLattice}
\begin{split}
   \ZZ^g\times \ZZ^g \times \gothH_g  &\xrightarrow{\sim} \Lambda_{\gothH_g}\\
    (a,b,\tau) &\mapsto (\tau a+ b, \tau),
\end{split}
\end{equation}
so that the period map of the universal family is the normalized period matrix $\Pi(\tau)=(\tau\ \ I_g)$. In particular, at any point $\tau_0\in \gothH_g$, the rational representation is a well-defined map
\[
M_{r,\tau_0}:\End(\mathbb{T}_{g,\tau_0})\rightarrow \Mat_{2g}(\ZZ)
\]
by the fixed trivialization.

Assume for now $S\subseteq\bA_g$ and let $\calA=\gothA_g|_S$ be the induced family. Fix once and for all a connected component $\widetilde S$ of $u_g^{-1}(S^{\mathrm{an}})\subseteq\gothH_g$. The inclusion $\widetilde S\hookrightarrow\gothH_g$ pulls back the fixed trivialization \eqref{eqn:trivializationUnivLattice} of $\Lambda_{\gothH_g}$ to a flat trivialization of $\Lambda_S$ over $\widetilde S$; applying \Cref{rmk:rrrMatrix} with $U=\widetilde S^{\mathrm{sm}}$, this determines
\begin{equation}\label{eqn:ratioReprMatrix}
M_r=M_{r,\widetilde S}:\End(\calA/S)\hookrightarrow\Mat_{2g}(\ZZ),
\end{equation}
a well-defined integer matrix representation, together with the compatibility \eqref{eqn:relativeCompatibility}. We use this trivialization, and the induced $M_r$, throughout.
 
The choice of component matters only up to the deck group: a different component of $u_g^{-1}(S^{\mathrm{an}})$ differs by some $\gamma\in\Sp_{2g}(\ZZ)$, under which $M_r(\phi)$ is replaced by $\gamma\,M_r(\phi)\,\gamma^{-1}$. In particular $M_r(\phi)$ is not, as an individual matrix, an invariant of $S$; only its conjugacy class is, so any quantity attached to $S$ that we extract from $M_r(\phi)$ will be a conjugacy invariant (its characteristic polynomial, trace, or the norm induced by the Rosati involution). Fixing $\widetilde S$ removes this ambiguity for the computations to follow.

\medskip
Consider the real extension of \Cref{eqn:trivializationUnivLattice}
\begin{equation}\label{eqn:realIsoUniversalCover}
\begin{split}
   \RR^g\times \RR^g \times \gothH_g  &\xrightarrow{\sim} \CC^g\times\gothH_g\\
    (a,b,\tau) &\mapsto (\tau a+b, \tau).
\end{split}
\end{equation}
This is a real-analytic isomorphism which, for each fixed $\tau$, is the $\RR$-linear isomorphism $\RR^{2g}\xrightarrow{\sim}\CC^g$ determined by the period matrix $\Pi(\tau)$.

\begin{defn}
The \emph{universal uniformized Betti map} is defined to be the map
\begin{equation}\label{eq:universalUniformizedBetti}
\widetilde{\gothb}: \CC^g\times \gothH_g \rightarrow \RR^{2g},
\end{equation}
which is the composition of the inverse of \Cref{eqn:realIsoUniversalCover} with the natural projection.  Explicitly, $\widetilde{\gothb}(z,\tau)=(a,b)$ where $z=\tau a+b$.
\end{defn}

% Since the abelian scheme $\calA/S$ is equipped with the data of \Cref{eqn:modularMaps}, the modular map $S^\mathrm{an}\to\bA_g^{\mathrm{an}}$ lifts, on the universal cover, to a map $\widetilde S\to\gothH_g$, along which $\calA/S$ is  the pullback of the universal family $\mathbb{T}_g/\gothH_g$; the level
% structure of \Cref{eqn:modularMaps} rigidifies this lift, so it is unique.
% While the local system $\Lambda_S$ cannot generally be trivialized over $S^\mathrm{an}$, its pullback $\Lambda_{\widetilde S}$ inherits the canonical trivialization $(a,b,\tau)\mapsto(\tau a+b,\tau)$ from $\Lambda_{\gothH_g}$. Because $\End(\calA/S)$ injects naturally into $\End(\calA \times_S \widetilde S / \widetilde S)$, evaluating a global endomorphism on this trivialized cover yields a well-defined relative rational representation
% \begin{equation}\label{eqn:ratioReprMatrix}
% M_r: \End(\calA/S) \hookrightarrow \Mat_{2g}(\ZZ).
% \end{equation}

\subsection{Small representatives of cosets}\label{ss:smallrep}
Consider a polarized complex abelian variety $(A,L)$ of dimension $g$, and fix a trivialization $H_1(A,\ZZ)\simeq\ZZ^{2g}$. The polarization equips $H_1(A,\RR)\simeq \RR^{2g}$ with a Euclidean metric, namely the positive definite symmetric form $S=\operatorname{Re}H$ associated with the Riemann form of $L$ as in Section~\ref{ss:rosati}; we refer to it as the \emph{polarization metric} of $(A,L)$. We denote it by $(\cdot,\cdot)$, and its associated norm by $\lVert\cdot\rVert$.  Let $\Lambda$ be a subgroup of $\ZZ^{2g}$ of rank $r\ge1$. 

In the point-counting argument of the main proof, we will encounter real cosets $z+\Lambda_\RR\subseteq\RR^{2g}$, where $\Lambda\subseteq\ZZ^{2g}$ is the period lattice of an abelian subvariety in the Betti coordinates, and we will need to replace a coset by an integral representative $\lambda\in\ZZ^{2g}$ of controlled norm.  Finding a reasonably good representative in a uniform way is the goal of this subsection.

\begin{defn}
The \emph{covering radius} $\mu(\Lambda)$ of $\Lambda$ is the smallest radius such that the closed balls of radius $\mu(\Lambda)$ centered at the points of $\Lambda$ cover $\Lambda_\RR$.
\end{defn}

We write $\covol(\Lambda)$ for the covolume of $\Lambda$ in $\Lambda_\RR$ under the polarization metric. Recall that the covolume of a period lattice of an abelian variety is proportional to the degree by \Cref{lem:covolumeDegree}. The next lemma bounds the covering radius by the covolume and the first successive minimum:

\begin{lemma}\label{lem:coveringRadius}
$\mu(\Lambda)\leq c_r\,\mu_1^{1-r}\covol(\Lambda)$, where $c_r$ is a constant depending only on $r$, and $\mu_1=\min\{\lVert\lambda\rVert:\lambda\in\Lambda\smallsetminus \{0\}\}$.
\end{lemma}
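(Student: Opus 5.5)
We argue with successive minima and a reduced basis of $\Lambda$; throughout, $c_r, c'_r,\dots$ denote constants depending only on $r$, and $\mu_1\le\mu_2\le\dots\le\mu_r$ are the successive minima of $\Lambda$ in $\Lambda_\RR$ with respect to the polarization metric.

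\textbf{Step 1 (reduced basis).} We choose a basis $v_1,\dots,v_r$ of $\Lambda$ that is reduced in the sense of Minkowski, or Korkine--Zolotarev, or LLL; any such choice will do. It satisfies $\lVert v_i\rVert\le c'_r\mu_i$ for every $i$. Only its existence matters here, not how it is computed.

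\textbf{Step 2 (covering radius via the fundamental parallelepiped).} The parallelepiped $P=\{\sum t_iv_i: 0\le t_i<1\}$ is a fundamental domain for $\Lambda$ in $\Lambda_\RR$. Every $x\in\Lambda_\RR$ is therefore congruent modulo $\Lambda$ to a point of $P$. Every point of $P$ lies within distance $\sum_i\lVert v_i\rVert$ of the origin. Hence
\[
\mu(\Lambda)\le\sum_{i=1}^r\lVert v_i\rVert\le r\,c'_r\,\mu_r .
\]
With a bit more care, rounding coordinates to the nearest integer gives the factor $\frac12$, but that is irrelevant for the final constant.

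\textbf{Step 3 (Minkowski's second theorem).} Minkowski's second theorem gives $\mu_1\mu_2\cdots\mu_r\le 2^r\covol(\Lambda)/\omega_r$, where $\omega_r$ is the volume of the unit $r$-ball. Since $\mu_i\ge\mu_1$ for every $i<r$, we get
\[
\mu_1^{r-1}\mu_r\le\prod_{i=1}^r\mu_i\le \frac{2^r}{\omega_r}\covol(\Lambda),
\]
that is, $\mu_r\le (2^r/\omega_r)\,\mu_1^{1-r}\covol(\Lambda)$. Combining this with Step 2 gives the lemma with $c_r=r\,c'_r\,2^r/\omega_r$.

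\textbf{Main point of care.} The key input is Step 1, the existence of a basis comparable to the successive minima. A basis of linearly independent vectors realizing the successive minima need not generate $\Lambda$, so Step 1 is needed rather than just the minima. One can avoid reduced bases altogether by using the standard fact that the covering radius satisfies $\mu(\Lambda)\le\frac{\sqrt r}{2}\mu_r$. This follows by covering $\Lambda_\RR$ with the sublattice generated by independent vectors attaining the $\mu_i$, after Gram--Schmidt orthogonalization with respect to the same metric. With this fact, Step 3 alone gives the claim. Everything takes place inside the Euclidean space $(\Lambda_\RR,(\cdot,\cdot))$, so the ambient $\ZZ^{2g}$ and the abelian variety play no role. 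The constant depends only on $r$ because every step is intrinsic to an $r$-dimensional lattice.
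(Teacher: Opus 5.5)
Your proof is correct, but your main route differs from the paper's. The paper never uses a basis of $\Lambda$. It takes linearly independent $\lambda_1,\dots,\lambda_r\in\Lambda$ with $\lVert\lambda_i\rVert=\mu_i$, writes $z\in\Lambda_\RR$ as $\sum_i a_i\lambda_i$, and rounds each $a_i$ to the nearest integer. The sublattice $\Lambda'$ generated by the $\lambda_i$ lies in $\Lambda$, so this gives $z-\sum_i c_i\lambda_i\in\Lambda$ with $|c_i|\le\frac12$. Hence $\mu(\Lambda)\le\mu(\Lambda')\le\frac12\sum_i\mu_i\le\frac r2\mu_r$. Minkowski's second theorem then finishes exactly as in your Step 3, with $c_r=r2^{r-1}/V_r$.

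Your Steps 1--2 instead import the existence of a reduced basis with $\lVert v_i\rVert\le c'_r\mu_i$ (Mahler, Minkowski or LLL reduction). This is a genuinely deeper input, and here it only costs you a worse constant. Your remark that ``Step 1 is needed rather than just the minima'' is mistaken. Passing from the finite-index sublattice $\Lambda'$ to $\Lambda$ can only decrease the covering radius, so vectors realizing the minima suffice even though they need not generate $\Lambda$. Your own closing alternative uses exactly this; it is essentially the paper's argument, sharpened by Gram--Schmidt to $\frac{\sqrt r}{2}\mu_r$.
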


\begin{proof}
Let $\mu_1\leq\cdots\leq\mu_{r}$ be the successive minima of $\Lambda$. By definition, there are linearly independent vectors $\lambda_1,\dots,\lambda_{r}\in\Lambda$ spanning $\Lambda_\RR$ with $\lVert\lambda_i\rVert=\mu_i$.  In particular, for any $z\in\Lambda_\RR$, there are $c_1,\dots,c_{r}\in\RR$ with $|c_i|\leq 1/2$ such that $z-c_1\lambda_1-\cdots-c_{r}\lambda_{r}\in\Lambda$.  This implies that
\[
\mu(\Lambda)\leq\frac{1}{2}\sum_{i=1}^r\mu_i .
\]
By Minkowski's second theorem, $\mu_1\cdots\mu_{r}\leq 2^{r}\covol(\Lambda)/V_r$, where $V_r$ is the volume of the standard $r$-dimensional unit ball.  Using $\mu_1\cdots\mu_r\geq\mu_1^{r-1}\mu_r$, we get
\[
\mu(\Lambda)\leq\frac{r}{2}\,\mu_r \leq\frac{r\cdot 2^{r-1}}{V_r}\,\mu_1^{1-r}\covol(\Lambda).
\]
Set $c_r=r\cdot 2^{r-1}/V_r$ and we are done.
\end{proof}

The lemma above and \Cref{lem:covolumeDegree} are stated for the polarization metric, whereas the point counting takes place in the fixed coordinates $\RR^{2g}$ with the \emph{standard Euclidean metric} $|\cdot|$. Two observations make the transfer uniform for a family $\calA/S$ with $S\subseteq\bA_g$ and $\calA=\gothA_g|_S$. First, fix a connected component $\widetilde S$ of $u_g^{-1}(S^{\mathrm{an}})$ and use the induced trivialization $\Lambda_{\widetilde S}\simeq\ZZ^{2g}\times \widetilde S$ of \eqref{eqn:trivializationUnivLattice}. For an abelian subscheme $\calB\subseteq\calA$, the local system $H_1(\calB\times_S \widetilde S/\widetilde S,\ZZ)$ defines a map from $\widetilde S$ to the set of sublattices of $\ZZ^{2g}$; this map is locally constant, and the set of sublattices is discrete, so it is constant on the connected $\widetilde S$. Thus the abelian subscheme $\calB$ determines a well-defined sublattice $\Lambda\subseteq\ZZ^{2g}$, and each quantity attached to it may be evaluated at one point of $\widetilde S$, chosen once and for all. Second, since $\Lambda\subseteq\ZZ^{2g}$, its first successive minimum is bounded below in terms of the comparison constant between the two metrics at that point. This gives the following lemma, the goal of this subsection.
 
\begin{lemma}\label{lem:periodNorm}
Let $S\subseteq\bA_g$, let $\calA=\gothA_g|_S$ be the induced abelian scheme and let $\calL$ be a relatively ample line bundle. Fix a connected component $\widetilde S$ of $u_g^{-1}(S^{\mathrm{an}})$, and use the induced trivialization $\Lambda_{\widetilde S}\simeq\ZZ^{2g}\times \widetilde S$ from \eqref{eqn:trivializationUnivLattice}. There exists a constant $c>0$, depending only on $(\calA/S,\calL)$ and the choice of $\widetilde S$, with the following property.
 
Let $\calB$ be an abelian subscheme of $\calA$, and let $\Lambda\subseteq\ZZ^{2g}$ be the lattice corresponding to $\calB$. Then every $z\in\ZZ^{2g}+\Lambda_\RR$ admits $\lambda\in\ZZ^{2g}$ with $z-\lambda\in\Lambda_\RR$ and
\[
|\lambda| \leq |z|+c\deg_\calL\calB ,
\]
where $|\cdot|$ denotes the standard Euclidean norm.
\end{lemma}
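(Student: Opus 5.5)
Fix once and for all a base point $\tau_0\in\widetilde S$. As explained before the statement, the lattice $\Lambda\subseteq\ZZ^{2g}$ attached to $\calB$ is independent of the point of $\widetilde S$. Moreover, $\deg_\calL\calB$ may be computed on the fibre $\calB_{s_0}$, where $s_0=u_g(\tau_0)$, since the degree of a flat family is constant. So the whole problem takes place in the single polarized abelian variety $(A,L)=(\calA_{s_0},\calL_{s_0})$ with its fixed trivialization $H_1(A,\ZZ)\simeq\ZZ^{2g}$. On $\RR^{2g}$ we then have two norms: the polarization norm $\lVert\cdot\rVert$ at $\tau_0$, and the standard norm $|\cdot|$. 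Choose a constant $\kappa\ge1$, depending only on $(\calA/S,\calL)$ and $\tau_0$, with
\[
\kappa^{-1}|v|\le\lVert v\rVert\le\kappa|v|\qquad(v\in\RR^{2g}).
\]
Note that $\calL_{s_0}$ need not be principal; we simply use its Riemann form.

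Let $r=2\dim\calB=\operatorname{rank}\Lambda$. The case $r=0$ is trivial: take $\lambda=z\in\ZZ^{2g}$. For $r\ge1$, write $z=w+u$ with $w\in\ZZ^{2g}$ and $u\in\Lambda_\RR$. Since $\Lambda_\RR$ is covered by $\lVert\cdot\rVert$-balls of radius $\mu(\Lambda)$ centred at lattice points, there is $\ell\in\Lambda$ with $\lVert u-\ell\rVert\le\mu(\Lambda)$. Set $\lambda:=w+\ell\in\ZZ^{2g}$. Then $z-\lambda=u-\ell\in\Lambda_\RR$ and
\[
|\lambda|\le|z|+|u-\ell|\le|z|+\kappa\,\mu(\Lambda).
\]
So it suffices to show that $\mu(\Lambda)$ is at most a constant times $\deg_\calL\calB$. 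Note that we only ever move along $\Lambda_\RR$, so no projection issue arises.

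For this, apply \Cref{lem:coveringRadius}. Every nonzero vector of $\Lambda\subseteq\ZZ^{2g}$ has $|\cdot|\ge1$, so $\mu_1\ge\kappa^{-1}$ and hence $\mu_1^{1-r}\le\kappa^{r-1}\le\kappa^{2g}$. Next, $\Lambda=H_1(B,\ZZ)$ is exactly the period lattice $\Lambda_B$ of $B=\calB_{s_0}$. Indeed, $H_1$ of an abelian subvariety is saturated, so $\Lambda=\Lambda_{\QQ}\cap\ZZ^{2g}$. Then \Cref{lem:covolumeDegree} gives
\[
\covol(\Lambda)=\deg B/(\dim B)!\le\deg_\calL\calB.
\]
Combining,
\[
\mu(\Lambda)\le\Bigl(\max_{r\le 2g}c_r\Bigr)\kappa^{2g}\deg_\calL\calB,
\]
and the lemma follows with $c=\kappa^{2g+1}\max_{r\le2g}c_r$. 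This constant depends only on the family, $\calL$ and the choice of $\widetilde S$ (through $\tau_0$).

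The only step needing care is the uniformity across all abelian subschemes $\calB$. The point is that everything is evaluated at the one fixed fibre $\tau_0$, so the comparison constant $\kappa$ is independent of $\calB$, and the integrality of $\Lambda$ supplies the lower bound on $\mu_1$ that \Cref{lem:coveringRadius} requires. I would also check the claim $\deg_\calL\calB=\deg_{L}\calB_{s_0}$ (constancy of fibre degrees in the flat family $\calB\to S$), which is where the degree in the statement should be interpreted fibrewise.
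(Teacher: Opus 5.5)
Your proof is correct and follows essentially the same route as the paper. Both fix $\tau_0$, compare the polarization norm with the standard norm, use integrality of $\Lambda$ to bound $\mu_1$ from below, and combine \Cref{lem:coveringRadius} with \Cref{lem:covolumeDegree}. The one difference is that you approximate the $\Lambda_\RR$-component $u$ of $z$ directly by a lattice point, whereas the paper first replaces $z$ by the point of its coset closest to the origin. Your version is slightly simpler and gives exactly the stated bound; the paper's detour gives the marginally sharper $\operatorname{dist}(0,z+\Lambda_\RR)$ in place of $|z|$, which it then weakens anyway.
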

 
\begin{proof}
Since the conclusion involves only $\Lambda$, $\deg_\calL\calB$ and the standard metric, none of which varies over $\widetilde S$, we may fix once and for all a point $\tau_0\in\widetilde S$ and argue there; all constants below are computed at $\tau_0$ and thus depend only on $(\calA/S,\calL)$ and $\widetilde S$.
 
Let $\lVert\cdot\rVert$ be the polarization metric of $(\calA_{\tau_0},\calL_{\tau_0})$ on $\RR^{2g}$, and let $c'\geq1$, depending only on $\tau_0$, satisfy
\[
\tfrac{1}{c'}|v|\leq \lVert v\rVert \leq c' |v| \qquad\text{for all }v\in\RR^{2g}.
\]
Set $g'=\dim(\calB/S)$; if $g'=0$ the statement holds trivially with $\lambda=z$, so assume $g'\geq1$, and note the sublattice $\Lambda$ determined by $\calB$ has rank $2g'$.
 
Every nonzero $\lambda\in\Lambda\subseteq\ZZ^{2g}$ has $|\lambda|\geq1$, hence $\lVert\lambda\rVert\geq1/c'$. Therefore $\mu_1=\mu_1(\Lambda,\lVert\cdot\rVert)\geq1/c'$, and since $1-2g'<0$, we have
\[
\mu_1^{1-2g'} \leq (c')^{2g'-1}.
\]
Combining \Cref{lem:coveringRadius} (with $r=2g'$) and \Cref{lem:covolumeDegree},
\begin{equation}\label{eq:muBound}
\mu(\Lambda)\leq c_{2g'}\,\mu_1^{1-2g'}\covol(\Lambda) \leq \frac{c_{2g'}(c')^{2g'-1}}{g'!}\deg_\calL\calB.
\end{equation}
 
Write $z\in\lambda_1+\Lambda_\RR$ with $\lambda_1\in\ZZ^{2g}$, and decompose $z=z^{\parallel}+z^{\perp}$ with $z^{\parallel}\in\Lambda_\RR$ and $z^{\perp}$ orthogonal to $\Lambda_\RR$ for the standard metric. Then $z^{\perp}=z-z^{\parallel}\in\lambda_1+ \Lambda_\RR$, and $z^{\perp}$ is the point of $\lambda_1+\Lambda_\RR$ closest to the origin, so $|z^{\perp}|=\operatorname{dist}(0,z+\Lambda_\RR)$.
 
By definition of the covering radius, applied in $\lVert\cdot\rVert$ for $\lambda_1+\Lambda\subseteq\lambda_1+\Lambda_\RR \ni z^\perp$, there is $\lambda\in\lambda_1+\Lambda\subseteq\ZZ^{2g}$ with $\lVert z^{\perp}-\lambda\rVert\leq\mu(\Lambda)$. In particular $z-\lambda\in\Lambda_\RR$, and by \eqref{eq:muBound}
\[
|z^{\perp}-\lambda| \leq c'\lVert z^{\perp}-\lambda\rVert \leq \frac{c_{2g'}(c')^{2g'}}{g'!}\deg_\calL\calB .
\]
Finally,
\[
|\lambda| \leq |z^{\perp}|+|z^{\perp}-\lambda| \leq \operatorname{dist}\bigl(0,z+\Lambda_\RR\bigr)+c\deg_\calL\calB
\]
with $c=\max_{1\leq g'\leq g}c_{2g'}(c')^{2g'}/g'!$, and $\operatorname{dist}(0,z+\Lambda_\RR)\leq|z|$.
\end{proof}

\section{Point counting and Betti coordinates}
In this section, we first recall the o-minimal point counting theorem of Habegger and Pila, together with the necessary notions from model theory, and we set up the definable family that encodes the Betti coordinates on a subvariety $\calX\subseteq\gothA_g$ under the action of an endomorphism. Then we prove the main result of this section: if the Betti coordinates of an arc of $\calX$ are confined to a semi-algebraic set after applying the rational representation of an endomorphism, then that piece lies in a degeneracy locus of $\calX$. The two ingredients are combined later in the proof of the main theorem: the counting theorem produces such an arc from an abundance of rational points, and the criterion of \Cref{subsec:algdep} then gives degeneracy.

\subsection{O-minimality}\label{subsec:o-minimality}

A key ingredient in the proof of the main result is a point-counting theorem of Habegger and Pila~\cite{HP_ominimality}, itself building on the counting theorem of Pila and Wilkie~\cite{Pila_Wilkie}. We first recall the necessary notions from model theory; for a fuller treatment see \cite{Dries98,vdDriesMiller96}.

\begin{defn}
A \emph{structure} on $\RR$ is a sequence $\calS = (\calS_n)_{n \ge 1}$ such that for any $n,m \ge 1$:
\begin{itemize}
    \item $\calS_n$ is a Boolean algebra of subsets of $\RR^n$;

    \item $\calS_n$ contains every semi-algebraic subset of $\RR^n$;

    \item $A\in\calS_n$ and $B\in\calS_m$ imply $A\times B\in\calS_{n+m}$;

    \item $A\in\calS_{n+1}$ implies $\pi(A)\in\calS_n$,  where $\pi:\RR^{n+1} \to \RR^n$ is the projection onto the first $n$ coordinates.
\end{itemize}
It is \emph{o-minimal} if, moreover, the members of $\calS_1$ are exactly the finite unions of intervals.
\end{defn}

Fix an o-minimal structure $\calS$. A set $A\subseteq\RR^n$ is \emph{definable} if $A\in\calS_n$, and a function $f:A\to\RR^m$ is \emph{definable} if its graph is a definable subset of $\RR^{n+m}$. Finally, an \emph{arc} is a non-constant continuous map $\beta:[0,1]\to\RR^{N}$; by abuse of language we also call its image an arc, using the term for either as context dictates. It is \emph{definable} if the map $\beta$ is definable.

To count rational points we use multiplicative height. For $a/b\in\QQ$ in lowest terms, $H(a/b)=\max\{|a|,|b|\}$, and for $\alpha=(\alpha_1,\dots,\alpha_n)\in\QQ^n$, $H(\alpha)=\max_i H(\alpha_i)$. Given $Z\subseteq\RR^{n+m}=\RR^n\times \RR^m$ and $T\ge1$, we record the rational points of bounded height in the first factor
\[
Z^\sim(T)=\{(y,z)\in Z:\ y\in\QQ^n,\ H(y)\le T\}.
\]
The counting theorem bounds how many such points a definable set can carry before it must contain a semi-algebraic arc after projection, in a uniform way.

To state it, let $F \subseteq \RR^l \times \RR^{n+m}$ be a family of subsets of $\RR^{n+m}$; for $\psi \in \RR^l$ its fiber is $F_{\psi}=\{x \in \RR^{n+m} : (\psi,x)\in F\}$.  Let $\pi_1:\RR^{n+m}\to \RR^n$ and $\pi_2:\RR^{n+m}\to \RR^m$ be the two projections.

\begin{prop}[Habegger--Pila, {\cite[Cor.~7.2]{HP_ominimality}}]\label{prop:HP}
    Let $F \subseteq \RR^l \times \RR^n \times \RR^m$ be a definable family and $\varepsilon>0$ a real number. There exists a constant $c_\mathrm{HP}>0$, depending only on $F$ and $\varepsilon$, with the following property. If $\psi\in \RR^l$, $T\ge 1$ and $\Sigma \subseteq F_\psi^\sim(T)$ with
    \[
    \#\pi_2(\Sigma)>c_\mathrm{HP}\,T^\varepsilon,
    \]
    then there is a definable arc $\beta:[0,1] \to F_\psi$ such that:
    \begin{itemize}
        \item the composition $\pi_1 \circ \beta:[0,1]\to \RR^n$ is semi-algebraic,

        \item the composition $\pi_2\circ \beta:[0,1]\to \RR^m$ is non-constant,

        \item $\pi_2(\beta(0)) \in \pi_2(\Sigma)$,

        \item if $\calS$ admits analytic cell decomposition, then $\beta|_{(0,1)}$ is real-analytic.
    \end{itemize}
\end{prop}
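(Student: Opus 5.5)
This is \cite[Cor.~7.2]{HP_ominimality}, and we would follow the proof given there. The plan is to deduce it from a uniform-in-families Pila--Wilkie theorem \cite{Pila_Wilkie} in its ``semi-rational'' form, where only the first $n$ coordinates are required to be rational; this is \cite[Thm.~7.1]{HP_ominimality}. That theorem gives the following. For the definable family $F$ and $\varepsilon>0$ there are a constant $c_{\mathrm{HP}}$ and a definable family $W\subseteq \RR^{l'}\times\RR^{n+m}$ such that, for every $\psi$ and $T\ge1$, the set $F^\sim_\psi(T)$ is covered by at most $c_{\mathrm{HP}}T^{\varepsilon}$ fibers $W_{\psi'}\subseteq F_\psi$. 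Each such fiber is a connected definable set on which $\pi_1$ has semi-algebraic image, and it is either a single point or admits definable arcs with semi-algebraic first projection through each of its points.

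With that cover in hand, the argument is a pigeonhole plus curve selection.

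\emph{Pigeonhole.} If $\#\pi_2(\Sigma)>c_{\mathrm{HP}}T^\varepsilon$, then one of the covering sets $W_{\psi'}$ contains two points $x,x'\in\Sigma$ with $\pi_2(x)\neq\pi_2(x')$. In particular $W_{\psi'}$ is not a point and $\pi_2$ is nonconstant on it.

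\emph{Constructing the arc.} By definable curve selection, and by the structure of the blocks, we obtain a definable arc $\beta:[0,1]\to W_{\psi'}\subseteq F_\psi$ with $\beta(0)=x$ such that $\pi_1\circ\beta$ is semi-algebraic and $\pi_2\circ\beta$ is nonconstant.

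\emph{Nonconstancy of $\pi_2\circ\beta$.} If every such arc from $x$ had constant $\pi_2$, then connectedness of the block would force $\pi_2$ to be constant on all of $W_{\psi'}$, contradicting $\pi_2(x)\neq\pi_2(x')$.

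\emph{Analyticity.} If $\calS$ admits analytic cell decomposition, we decompose $[0,1]$ into finitely many cells on which $\beta$ is analytic. We then restrict to a subinterval on which $\pi_2\circ\beta$ is still nonconstant and reparametrize it semi-algebraically. Since $\beta(0)$ must stay in $\Sigma$, we first pass to an arc starting at $x$ that is analytic on its interior. This is possible because, by the monotonicity theorem, $\beta$ is analytic on some $(0,\delta)$, and rescaling $[0,\delta]$ to $[0,1]$ keeps $\pi_1\circ\beta$ semi-algebraic.

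The main obstacle is the block theorem itself. The difficulty is that the lifts of semi-algebraic arcs in $\pi_1(W)$ must remain continuous and stay inside $F_\psi$. One cannot simply pick a semi-algebraic arc in $\pi_1(F_\psi)$ and lift it by definable choice, since such a lift may be discontinuous. The first thing to try is the Habegger--Pila approach: run the Pila--Wilkie induction on the parametrized family $(y,z)$ with rationality imposed only on $y$, using reparametrization lemmas uniform in $\psi$. This is why the blocks come with the built-in property that each point lies on a definable arc with semi-algebraic first projection.
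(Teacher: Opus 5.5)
The paper gives no argument for this statement beyond quoting \cite[Cor.~7.2]{HP_ominimality}, so deferring to Habegger--Pila is consistent with it. However, two steps of the derivation you sketch do not work as written.

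The first is the analyticity step. Shrinking to $[0,\delta]$ so that $\beta$ is analytic on $(0,\delta)$ can make $\pi_2\circ\beta$ constant, for instance if $\beta$ first moves with $\pi_2$ fixed. (Analyticity on some $(0,\delta)$ comes from analytic cell decomposition, not from monotonicity.) Passing to a later subinterval instead loses the starting point. The trouble comes from insisting that $\beta(0)$ stay in $\Sigma$. Condition (iii) only asks for $\pi_2(\beta(0))\in\pi_2(\Sigma)$, and this weaker form is exactly what permits the repair. Let $[0,u_0]$ be the maximal initial subinterval on which $\pi_2\circ\beta$ is constant. Then $u_0<1$ and $\pi_2(\beta(u_0))=\pi_2(x)$. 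By maximality, $\pi_2\circ\beta$ is nonconstant on $[u_0,u_0+\delta]$ for every $\delta>0$. Choose $\delta$ with $\beta$ analytic on $(u_0,u_0+\delta)$ and rescale affinely to $[0,1]$; this gives all four properties.

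The second is the nonconstancy step. The properties you list are: connected, semi-algebraic first projection, and every point lying on some arc with semi-algebraic first projection. These do not imply that $x'$ is reachable from $x$ by such an arc. Here is an example. Let a sheet at height $z=0$ over $[-1,0]\times[0,2]$ meet the wall $\{(s,e^s,z):0\le s\le 1,\ s\le z\le 2s\}$ only at $p_0=(0,1,0)$. Join the wall by a vertical segment to a sheet at height $10$ over $[0,1]\times[1,e]$. Every point lies on a horizontal or vertical segment, and the first projection of the union is semi-algebraic. But the wall's fibre over $(0,1)$ is $\{p_0\}$, so leaving the lower sheet would force the first projection to move along the transcendental curve $y_2=e^{y_1}$. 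Hence no arc with semi-algebraic first projection leaves the lower sheet.

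What you need is the local version: each point of a block has a neighbourhood in it whose points are all joined to it by such arcs. Since these arcs can be reversed and concatenated, the set reachable from $x$ is then open and closed, so the arc from $x$ to $x'$ exists and has nonconstant $\pi_2$.

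This also removes the lifting problem you call the main obstacle, without rerunning the Pila--Wilkie induction. Decompose $F$ into cells, uniformly in $\psi$, listing the first-factor coordinates first.
\begin{itemize}
\item On a cell with positive-dimensional $\pi_1$-fibres, each point of $\Sigma$ starts an arc inside its fibre, with constant $\pi_1$ and nonconstant $\pi_2$.
\item A cell with finite $\pi_1$-fibres is the graph of a continuous definable $f$ over its projection. Apply Pila's block form of the Pila--Wilkie theorem to that family of projections, and use pigeonhole over the boundedly many cells. This gives a block containing $y,y'\in\pi_1(\Sigma)$ with $f(y)\ne f(y')$.
\end{itemize}
A block is locally an open piece of a semi-algebraic set of the same dimension, so a semi-algebraic path $\alpha$ joins $y$ to $y'$ inside it. Then $\beta=(\alpha,f\circ\alpha)$, corrected as above for analyticity, is the required arc.
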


Throughout this article we work in the structure $\RR_{\mathrm{an},\exp}$ generated by the restricted real-analytic functions together with the real exponential; ``definable'' will always mean definable in $\RR_{\mathrm{an},\exp}$. This structure is o-minimal, by Wilkie~\cite{Wilkie} and van~den~Dries--Miller~\cite{vdDriesMiller}, and it admits analytic cell decomposition. In particular the last conclusion of \Cref{prop:HP} is always available to us, so the arc $\beta$ may be taken real-analytic on $(0,1)$.

We now put the uniformization into this definable setting. Recall the uniformization $u_{g,a}:\CC^g\times\gothH_g\to\gothA_g^{\mathrm{an}}$ of the universal abelian variety. By Peterzil--Starchenko~\cite{PS_Definability13}, there is a semi-algebraic subset 
\[
\gothF:= \left\{(\tau a+b,\tau):\tau\in\mathfrak F_g(3),\ (a,b)\in[-1/2,1/2]^{2g} \right\} \subseteq\CC^g\times\gothH_g,
\]
where \(\mathfrak F_g(3)\) is a suitable fundamental set for the level group, such that the restriction $u_{g,a}|_\gothF:\gothF\to\gothA_g^{\mathrm{an}}$ is surjective and definable in $\RR_{\mathrm{an},\exp}$, i.e. $\gothF$ is a definable fundamental set for the uniformization. Moreover the universal uniformized Betti map $\widetilde{\gothb}:\CC^g\times\gothH_g\to\RR^{2g}$ of \eqref{eq:universalUniformizedBetti} is definable, being the composition of the inverse of the fiber-wise linear isomorphism \eqref{eqn:realIsoUniversalCover} with a coordinate projection.

For an algebraic subvariety $\calX\subseteq\gothA_g$, we assemble the Betti coordinates on $\calX$ into the definable family
\begin{equation}\label{eq:defFamily}
F(\calX):=\Bigl\{(\psi,w,z)\in\Mat_{2g}(\RR)\times\RR^{2g}\times\gothF:\ z\in\bigl(u_{g,a}|_\gothF\bigr)^{-1}(\calX^{\mathrm{an}}),\ \psi\,w=\psi\,\widetilde{\gothb}(z)\Bigr\},
\end{equation}
where $\psi\in\Mat_{2g}(\RR)$ acts on $\RR^{2g}$ as a matrix, so that $F(\calX)$ is parametrized by $\RR^{l}$ with $l=(2g)^2$. Each condition defining $F(\calX)$ is definable, so $F(\calX)$ is a definable family. In the application the parameter $\psi$ will be specialized to the rational representation $M_r(\phi)$ of an endomorphism $\phi\in\End(\calA/S)$, viewed as a real matrix. Then the fiber $F(\calX)_\psi$ then records the points of $\calX$ whose Betti coordinates agree with those of a given point after applying $\phi$, that is, the Betti coordinates on $\calX$ up to the endomorphism $\phi$.

\subsection{Betti relations force degeneracy}\label{subsec:algdep}
Let $S\subseteq\bA_g$ be a locally closed subvariety, and let $\calA=\gothA_g|_S$ be the induced abelian scheme. Fix a connected component $\widetilde S$ of $u_g^{-1}(S^{\mathrm{an}})$ as in \Cref{subsec:rrr}. We write $\Pi(\tau):=(\tau\ \ 1_g)$ and $g_\phi:=\rdim\ker\phi$ for $\phi\in\End(\calA/S)$.

\begin{prop}\label{prop:transcendence}
Let $\calX\subseteq\calA$ be an irreducible subvariety, and let
\[
\beta:[0,1]\longrightarrow u_{g,a}^{-1}(\calX^{\mathrm{an}}) \cap(\CC^g\times\widetilde S)
\]
be an arc that is real-analytic on $(0,1)$. Let $\phi\in\End(\calA/S)$ be nonzero. Suppose that there is a semialgebraic set $\gamma\subseteq\RR^{2g}$ of dimension at most $1$ satisfying
\[
M_r(\phi)\bigl(\widetilde{\gothb}(\beta([0,1]))\bigr) \subseteq\gamma.
\]
Then $u_{g,a}(\beta([0,1]))\subseteq \calX^{\mathrm{deg}}(g_\phi+1)$.
\end{prop}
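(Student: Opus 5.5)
We would argue by the standard Habegger--Pila route: turn the real semialgebraic condition on Betti coordinates into a complex algebraic condition on the universal cover, then apply \Cref{prop:AxScCor} to a suitable complex analytic set containing $\beta$. Let $\Gamma\subseteq\CC^{2g}$ be the complex Zariski closure of $\gamma$; it has complex dimension at most $1$. Put $N:=M_r(\phi)$ and define
\[
\calZ:=\bigl\{(z,\tau)\in\CC^g\times\gothH_g:\ \exists\, w\in\CC^{2g}\text{ with } z=\Pi(\tau)w,\ Nw\in\Gamma\bigr\}.
\]
This is (the trace on $\CC^g\times\gothH_g$ of) a constructible algebraic set, so its Zariski closure $\overline{\calZ}$ is algebraic in the cover's algebraic structure. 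For points of $\beta$ we may take $w=\widetilde{\gothb}(z)\in\RR^{2g}$, because $z=\Pi(\tau)\widetilde{\gothb}(z,\tau)$ and $N\widetilde{\gothb}(\beta(s))\in\gamma\subseteq\Gamma$. Hence $\beta([0,1])\subseteq\overline{\calZ}$.

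Next we bound the relative dimension of $\overline{\calZ}$ over $\gothH_g$. Fix $\tau$. Then $w$ ranges over $N^{-1}(\Gamma)$. Since $\ker N\otimes\CC$ has complex dimension $2g_\phi$, this preimage has dimension at most $2g_\phi+1$. Its image under the $\CC$-linear map $\Pi(\tau)$ is contained in a finite union of sets of the form $\Pi(\tau)(\ker N_\CC)+(\text{a curve})$. By \Cref{lem:dictionary} and \eqref{eqn:relativeCompatibility}, $\Pi(\tau)(\ker N_\RR)$ is the Lie algebra of $\ker^0\phi_\tau$. It is a complex subspace of dimension $g_\phi$, so $\Pi(\tau)(\ker N_\CC)$ equals this subspace. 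Therefore each fiber of $\calZ$ has dimension at most $g_\phi+1$, and so $\rdim\overline{\calZ}\le g_\phi+1$. This is the step we expect to need the most care: one must check that complexification does not enlarge the image beyond $\Lie\ker^0\phi_\tau$ plus one dimension, and that taking the Zariski closure does not raise the generic fiber dimension over its image in $\gothH_g$.

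Now let $\calU$ be an irreducible component of the complex analytic set $\overline{\calZ}\cap u_{g,a}^{-1}(\calX^{\mathrm{an}})\cap(\CC^g\times\widetilde S)$ that contains $\beta((0,1))$. Such a component exists because a connected real-analytic arc lies in a single irreducible component: the components meeting it in an open subarc cover it and are finitely many locally. Since $\beta$ is nonconstant, $\dim\calU\ge1$. Set $\calY:=u_{g,a}(\calU)^{\mathrm{Zar}}\subseteq\calX$. It is irreducible, positive-dimensional (as $u_{g,a}$ is a local biholomorphism), and contains $u_{g,a}(\beta((0,1)))$. Since $\calU^{\mathrm{Zar}}\subseteq\overline{\calZ}$, \Cref{prop:AxScCor} gives
\[
\rdim\calY^{\biZar}-\dim\calY\le\rdim\calU^{\mathrm{Zar}}-\dim\calU\le g_\phi+1-1=g_\phi .
\]
By \Cref{rmk:ws-vs-mixed-shimura}, $\rdim\calY^{\biZar}=\rdim\calY^{\ws}$, so $\delta_v(\calY)\le g_\phi<g_\phi+1$. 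Hence $\calY\subseteq\calX^{\deg}(g_\phi+1)$, and in particular $u_{g,a}(\beta((0,1)))$ lies in this locus. Finally, $\calX^{\deg}(g_\phi+1)$ is Zariski closed and $\beta$ is continuous, so the endpoints are included as well, giving $u_{g,a}(\beta([0,1]))\subseteq\calX^{\deg}(g_\phi+1)$.
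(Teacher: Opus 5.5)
There is a genuine gap in your dimension count, and it is exactly at the step you flagged. Your set $\calZ$ is defined over all $\tau\in\gothH_g$, and its algebraic envelope lives over all of $\Sym_g(\CC)$. But your fibre estimate uses \Cref{lem:dictionary} and \eqref{eqn:relativeCompatibility}, and these describe $\Pi(\tau)(\ker N_\RR)$ only where $N=M_r(\phi)$ is the rational representation of an endomorphism of the fibre over $\tau$, i.e.\ over $\widetilde S$.

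When $g_\phi\ge1$ and $\tau$ is general, $\Pi(\tau)(\ker N_\RR)$ is not a complex subspace. So $\Pi(\tau)(\ker N_\CC)$, its complex span, has dimension strictly greater than $g_\phi$. For instance, if $g=4$ and $g_\phi=2$ (a fibre product of two families of abelian surfaces, $\phi$ a projection), then for generic $\tau$ already $\Pi(\tau)(\ker N_\CC)=\CC^4$. The generic fibre of $\calZ$ is then all of $\CC^4$, and $\overline{\calZ}=\CC^4\times\Sym_4(\CC)$.

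Taking the Zariski closure cannot repair this. Fibre dimension is upper semicontinuous, so the fibres of $\overline{\calZ}$ over points of $\widetilde S$ are at least as large as the generic ones, not smaller. Hence $\calU^{\mathrm{Zar}}\subseteq\overline{\calZ}$ carries no information in general. Note also that a bound on $\rdim\overline{\calZ}$, i.e.\ on its generic fibre dimension, would not by itself bound $\rdim\calU^{\mathrm{Zar}}$, because $\calU^{\mathrm{Zar}}$ lies over a smaller base, where fibres can jump.

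The repair is the paper's argument: work only over $T:=p(\beta((0,1)))^{\mathrm{Zar}}$, and count total dimension rather than fibres of a closure.
\begin{enumerate}
\item On $p(\beta((0,1)))\subseteq\widetilde S$ the matrix $M_a(\tau)=\tau B+D$ has rank $g-g_\phi$. This rank persists on a Zariski open $T^\circ\subseteq T$ containing $p(\beta((0,1)))$.
\item Consider the incidence set
\[
J=\{(\tau,c,z)\in T^\circ\times\Gamma\times\CC^g:\ M_a(\tau)z=\Pi(\tau)c\},
\]
with $\Gamma$ your curve. Its fibres over $T^\circ\times\Gamma$ are empty or translates of $\ker M_a(\tau)$, so $\dim J\le\dim T+g_\phi+1$.
\item By compatibility, $M_a(\tau)z=\Pi(\tau)N\widetilde{\gothb}(z,\tau)$ for $(z,\tau)$ over $\widetilde S$. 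Hence $\beta((0,1))$ lies in the image $E$ of $J$, and $\overline{E}\subseteq\CC^g\times T$.
\item Any irreducible $V\subseteq\overline{E}$ containing $\beta((0,1))$ (for example your $\calU^{\mathrm{Zar}}$, with $\calU$ now chosen inside $\overline{E}$) has projection dense in $T$ and $\dim V\le\dim E\le\dim T+g_\phi+1$. So its relative dimension is at most $g_\phi+1$.
\end{enumerate}
Applying $M_a(\tau)$ to $z$, instead of parametrizing $z=\Pi(\tau)w$, also removes the need for your complexification check. With this change, the rest of your argument goes through: the positive-dimensional $\calU$, \Cref{prop:AxScCor}, \Cref{rmk:ws-vs-mixed-shimura}, and closedness of $\calX^{\deg}(g_\phi+1)$ for the endpoints.
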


\begin{proof}
Write
\[
R:=M_r(\phi)=\begin{pmatrix}
* & B \\
* & D
\end{pmatrix} \in\Mat_{2g}(\ZZ),
\qquad
M_a(\tau):=\tau B+D \quad\bigl(\tau\in\operatorname{Sym}_g(\CC)\bigr).
\]
The second blocks of \eqref{eqn:relativeCompatibility} identify $M_a(\phi,\tau)$ with $M_a(\tau)$ for $\tau\in\widetilde S$. With this identification, the same compatibility identity reads
\begin{equation}\label{eqn:transcendence-compatibility}
M_a(\tau)\Pi(\tau)=\Pi(\tau)R \qquad(\tau\in\widetilde S).
\end{equation}
Since $\Pi(\tau):\RR^{2g}\to\CC^g$ is a real-linear isomorphism and $M_a(\tau)$ represents the differential of $\phi$, we have
\begin{equation}\label{eqn:transcendence-rank}
\operatorname{rank}_{\CC}M_a(\tau) =\frac12\operatorname{rank}_{\RR}R
=g-g_\phi \qquad(\tau\in\widetilde S).
\end{equation}
In particular, the fibers of $\ker\phi$ have constant dimension $g_\phi$.

Put $\calM:=\beta((0,1))$ and $\calX_0:=\calX^{\mathrm{Zar}}$, with the latter closure taken in $\gothA_g$. Let $Z$ be the smallest closed complex analytic subset of $\CC^g\times\gothH_g$ containing $\calM$. Then $Z$ is irreducible. Nonconstancy and continuity of $\beta$ give
\[
\dim Z\geq1,\qquad \beta([0,1])\subseteq Z \subseteq u_{g,a}^{-1}(\calX_0^{\mathrm{an}}).
\]
Set
\[
W:=\calM^{\mathrm{Zar}}=Z^{\mathrm{Zar}}, \qquad T:=p(\calM)^{\mathrm{Zar}}=p(W)^{\mathrm{Zar}},
\]
where these closures are taken in $\CC^g\times\operatorname{Sym}_g(\CC)$ and $\operatorname{Sym}_g(\CC)$, respectively.

Put $r:=g-g_\phi$. By \eqref{eqn:transcendence-rank}, all $(r+1)$-minors of $M_a(\tau)$ vanish on $p(\calM)$, hence on $T$. Therefore
\[
T^\circ:=\{\tau\in T:\operatorname{rank}M_a(\tau)=r\}
\]
is a Zariski-open subset of $T$ containing $p(\calM)$. Let $C_\gamma:=\gamma^{\mathrm{Zar}}$ denote the complex Zariski closure of $\gamma$ in $\CC^{2g}$. Then $\dim C_\gamma\leq1$ by \cite[\S2.8]{BochCosRoy}.

Consider the incidence set
\[
J:=\bigl\{(\tau,c,z)\in T^\circ\times C_\gamma\times\CC^g: M_a(\tau)z=\Pi(\tau)c\bigr\}.
\]
Since $M_a(\tau)$ and $\Pi(\tau)$ have polynomial entries, $J$ is closed algebraic in $T^\circ\times C_\gamma\times\CC^g$, and locally closed in the ambient affine space. Every nonempty fiber of $J\to T^\circ\times C_\gamma$ is an affine translate of $\ker M_a(\tau)$, of dimension $g_\phi$. Consequently,
\begin{equation}\label{eqn:transcendence-incidence-bound}
\dim J\leq\dim T+\dim C_\gamma+g_\phi \leq\dim T+g_\phi+1.
\end{equation}
Let $E$ be the image of $J$ under $(\tau,c,z)\mapsto(z,\tau)$. By Chevalley's theorem, $E$ is constructible, so
\[
\dim E^{\mathrm{Zar}}=\dim E\leq\dim J.
\]
For $(z,\tau)\in\calM$, put $v:=\widetilde{\gothb}(z,\tau)$, so that $z=\Pi(\tau)v$. By hypothesis $Rv\in\gamma$, and \eqref{eqn:transcendence-compatibility} gives
\[
M_a(\tau)z=\Pi(\tau)Rv.
\]
Since $\tau\in p(\calM)\subseteq T^\circ$, the triple $(\tau,Rv,z)$ lies in $J$. Thus $\calM\subseteq E$ and $W\subseteq E^{\mathrm{Zar}}$. Using \eqref{eqn:transcendence-incidence-bound}, we obtain
\[
\rdim W=\dim W-\dim T\leq g_\phi+1.
\]

Finally, set
\[
Y_0:=u_{g,a}(Z)^{\mathrm{Zar}}\subseteq\calX_0, \qquad Y:=Y_0\cap\calX.
\]
The variety $Y_0$ is irreducible because $Z$ is irreducible, and positive-dimensional because $u_{g,a}$ is a local biholomorphism. Since $\calX$ is open in $\calX_0$, $Y$ is a nonempty open subset of $Y_0$, closed in $\calX$. In particular, $Y$ is irreducible, $\dim Y=\dim Y_0\geq1$, and
\[
Y^{\mathrm{biZar}} =Y_0^{\mathrm{biZar}} =u_{g,a}(Z)^{\mathrm{biZar}}.
\]
Using \Cref{rmk:ws-vs-mixed-shimura} and \Cref{prop:AxScCor}, we obtain
\begin{align*}
\delta_v(Y) &=\rdim Y^{\mathrm{ws}}-\dim Y\\
&=\rdim u_{g,a}(Z)^{\mathrm{biZar}} - \dim u_{g,a}(Z)^{\mathrm{Zar}}\\
&\leq\rdim W-\dim Z \leq g_\phi.
\end{align*}
Hence $Y\subseteq\calX^{\mathrm{deg}}(g_\phi+1)$. Since $\beta([0,1])\subseteq Z$ and $u_{g,a}(\beta([0,1]))\subseteq\calX$ by hypothesis, we conclude that
\[
u_{g,a}(\beta([0,1])) \subseteq Y \subseteq\calX^{\mathrm{deg}}(g_\phi+1),
\]
as required.
\end{proof}

\section{Complexity and large Galois orbits}
In this section we first define a general notion of special subscheme of an abelian variety $A$ relative to a subgroup $\Gamma\subseteq\End(A)$, which we call $\Gamma$-special. We then give an explicit upper bound for the degree of the $\Gamma$-special closure of a closed point in \Cref{thm:LGO}, and use it to establish the large-Galois-orbit statement required by the o-minimal counting theorem.

\subsection{\texorpdfstring{$\Gamma$}{Gamma}-special subschemes and complexity}\label{subsec:complexity}

Let $(A,L)$ be a polarized abelian variety over a number field $K$, and let $\Gamma\subseteq\End(A)$ be a nonzero subgroup. One class of subgroups is most relevant to us: if $A$ is a fiber of an abelian scheme $\calA/S$ over an irreducible variety $S$, then $\End(\calA/S)$ specializes injectively into $\End(A)$; we call the image a \emph{specialized endomorphism algebra}.

\begin{defn}
For a subset $\Gamma_0$ of $\Gamma$, the group subscheme $Z(\Gamma_0):=\bigcap_{\gamma\in \Gamma_0}\ker\gamma$ is called a \emph{$\Gamma$-special subscheme} of $A$. By convention, $Z(\emptyset)=A$.
\end{defn}

It follows from the definition that an arbitrary intersection of $\Gamma$-special subschemes is again $\Gamma$-special. We may therefore define the \emph{$\Gamma$-special closure} of a subset $\Sigma\subseteq A$, written $\langle\Sigma\rangle_\Gamma$, as the intersection of all $\Gamma$-special subschemes containing $\Sigma$. Writing
\[
I_\Gamma(\Sigma):=\{\gamma\in\Gamma:\Sigma\subseteq\ker\gamma\}
\]
for the \emph{relation subgroup} of $\Sigma$, we have $\langle\Sigma\rangle_\Gamma=Z(I_\Gamma(\Sigma))$.

\begin{lemma}\label{lem:compareGamma}
    Let $\Gamma'\subseteq\Gamma$ be a subgroup of finite index. For any $\Sigma\subseteq A$ we have $\langle\Sigma\rangle_\Gamma\subseteq\langle\Sigma\rangle_{\Gamma'}$ and $\langle\Sigma\rangle_\Gamma^0=\langle\Sigma\rangle_{\Gamma'}^0$.
\end{lemma}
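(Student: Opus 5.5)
The first inclusion is formal; the equality of identity components needs an index argument. Since $\Gamma'\subseteq\Gamma$, the relation subgroups satisfy $I_{\Gamma'}(\Sigma)=I_\Gamma(\Sigma)\cap\Gamma'\subseteq I_\Gamma(\Sigma)$. Intersecting kernels over a larger set gives a smaller group subscheme, so
\[
\langle\Sigma\rangle_\Gamma=Z(I_\Gamma(\Sigma))\subseteq Z(I_{\Gamma'}(\Sigma))=\langle\Sigma\rangle_{\Gamma'}.
\]
In particular $\langle\Sigma\rangle_\Gamma^0\subseteq\langle\Sigma\rangle_{\Gamma'}^0$.

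For the reverse inclusion of identity components, let $n=[\Gamma:\Gamma']$. Then $n\Gamma\subseteq\Gamma'$, since $n$ kills the finite group $\Gamma/\Gamma'$. Take $\gamma\in I_\Gamma(\Sigma)$. Then $n\gamma=[n]\circ\gamma$ lies in $\Gamma'$, and it kills $\Sigma$ because $\gamma$ does. Hence $n\gamma\in I_{\Gamma'}(\Sigma)$, and so
\[
\langle\Sigma\rangle_{\Gamma'}\subseteq\ker(n\gamma)=\gamma^{-1}(A[n]).
\]
This holds for every $\gamma\in I_\Gamma(\Sigma)$.

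Now set $H:=\langle\Sigma\rangle_{\Gamma'}^0$, a connected group subscheme, in fact an abelian subvariety. For each $\gamma\in I_\Gamma(\Sigma)$, the image $\gamma(H)$ is a connected group subscheme contained in the finite group $A[n]$. It is therefore trivial, so $H\subseteq\ker\gamma$. Intersecting over all $\gamma\in I_\Gamma(\Sigma)$ gives $H\subseteq Z(I_\Gamma(\Sigma))=\langle\Sigma\rangle_\Gamma$. Since $H$ is connected and contains $0$, it lies in $\langle\Sigma\rangle_\Gamma^0$, which gives the equality.

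The only point needing care is the connectedness argument: the image of a connected (reduced) group scheme under a homomorphism is connected, and a connected subgroup scheme of a finite étale group is trivial. This works because we are in characteristic zero. Beyond that the proof is routine.
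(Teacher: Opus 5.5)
Your proof is correct and follows essentially the same route as the paper: use $n\Gamma\subseteq\Gamma'$ to get $nI_\Gamma(\Sigma)\subseteq I_{\Gamma'}(\Sigma)$, then note that a connected subgroup of $\ker(n\gamma)=\gamma^{-1}(A[n])$ already lies in $\ker\gamma$. The paper packages this as an equality of identity components of $Z(nI_\Gamma(\Sigma))$ and $Z(I_\Gamma(\Sigma))$, whereas you argue elementwise through $\gamma(H)\subseteq A[n]$; this is the same argument, and your version makes the final step slightly more explicit.
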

\begin{proof}
    Let $n:=[\Gamma:\Gamma']$, so that $n\Gamma\subseteq\Gamma'$ and hence
    \[
    nI_\Gamma(\Sigma)\subseteq I_{\Gamma'}(\Sigma)\subseteq I_\Gamma(\Sigma).
    \]
    Applying $Z(\cdot)$, which reverses inclusions, gives $\langle\Sigma\rangle_\Gamma\subseteq\langle\Sigma\rangle_{\Gamma'}\subseteq Z(nI_\Gamma(\Sigma))$. For $\gamma\in\End(A)$ one has $\ker(n\gamma)=\gamma^{-1}(A[n])$, a finite union of translates of $\ker\gamma$, so $\ker(n\gamma)^0=\ker(\gamma)^0$. Hence $Z(nI_\Gamma(\Sigma))$ and $Z(I_\Gamma(\Sigma))$ have the same identity component, and the three terms above agree after passing to identity components.
\end{proof}

% it is not true that a single element of $I_\Gamma(\Sigma)$ can give the same identity component in general

\begin{defn} \label{def:Gamma_complexity}
    For a closed point $x$ of $A$, the \emph{$\Gamma$-complexity} of $x$ (or simply \emph{complexity}, when $\Gamma$ is clear) is the degree of its $\Gamma$-special closure with respect to $L$:
    \[
    \Delta_\Gamma(x):=\deg\bigl(\langle x\rangle_\Gamma\bigr).
    \]
\end{defn}
 
\subsection{Extremal heights along \texorpdfstring{$\Gamma$}{Gamma}}
We attach to $\Gamma$ two functions built from the N\'eron--Tate height $\hat h_L$ on $A$. Recall that $\hat h_L$ is the N\'eron--Tate height of $\tfrac12(L+[-1]^*L)$, and that $\lVert\cdot\rVert$ denotes the Weil metric on $\End(A)$. The first measures the largest value of the quadratic form $\gamma\mapsto\hat h_L(\gamma(x))$ relative to $\lVert\cdot\rVert$:
\begin{equation}\label{eq:GammaSingularValue}
    \hat h_\Gamma(x):=\sup_{\gamma\in\Gamma\setminus\{0\}} \frac{\hat h_L(\gamma(x))}{\lVert\gamma\rVert^2}, \qquad x\in A(\bar K).
\end{equation}

\begin{lemma}
$\hat h_\Gamma(x)$ is finite for every $x \in A(\bar K)$.
\end{lemma}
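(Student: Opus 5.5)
The finiteness of $\hat h_\Gamma(x)$ follows from the fact that $\gamma\mapsto\hat h_L(\gamma(x))$ is a positive semidefinite quadratic form on the finitely generated free $\ZZ$-module $\Gamma$, which extends to a quadratic form on the finite-dimensional real vector space $\Gamma_\RR:=\Gamma\otimes_\ZZ\RR$. Since $\lVert\cdot\rVert$ is a norm on $\Gamma_\RR$ (the Weil metric is positive definite on $\End_\RR(A)$ by \eqref{eq:posdef}), the ratio is then bounded by compactness of the unit sphere.

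The first step is to note that $\Gamma\subseteq\End(A)$ is free of finite rank, since $\End(A)$ embeds into $\Mat_{2g}(\ZZ)$ via $\rho_r$. The second step is to verify that $q_x(\gamma):=\hat h_L(\gamma(x))$ is a quadratic form on $\Gamma$. Writing $\langle\cdot,\cdot\rangle_L$ for the N\'eron--Tate pairing attached to the symmetric class $\tfrac12(L+[-1]^*L)$, we have $q_x(\gamma)=\langle\gamma(x),\gamma(x)\rangle_L$. The map $\gamma\mapsto\gamma(x)$ is a group homomorphism $\Gamma\to A(\bar K)$, and $\langle\cdot,\cdot\rangle_L$ is bilinear and symmetric. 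Hence
\[
q_x(\gamma_1+\gamma_2)+q_x(\gamma_1-\gamma_2)=2q_x(\gamma_1)+2q_x(\gamma_2),
\]
and $q_x(n\gamma)=n^2q_x(\gamma)$. So $q_x$ is the diagonal of the symmetric bilinear form $B_x(\gamma_1,\gamma_2)=\langle\gamma_1(x),\gamma_2(x)\rangle_L$ on $\Gamma$. It is nonnegative since $\hat h_L\ge0$.

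The third step is to choose a $\ZZ$-basis $\gamma_1,\dots,\gamma_r$ of $\Gamma$ and extend $B_x$ $\RR$-bilinearly to $\Gamma_\RR$. This gives a continuous function $Q_x$ on $\Gamma_\RR$, homogeneous of degree $2$, namely a polynomial with coefficients $B_x(\gamma_i,\gamma_j)$. The map $\Gamma_\RR\to\End_\RR(A)$ is injective: a $\ZZ$-basis of a subgroup of the lattice $\End(A)$ stays $\RR$-linearly independent in $\End(A)\otimes\RR$, since $\End(A)$ is a free $\ZZ$-module and linear independence over $\ZZ$, hence over $\QQ$, is preserved by the flat extension to $\RR$. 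Therefore $\lVert\cdot\rVert$ restricts to a genuine norm on $\Gamma_\RR$. The unit sphere $\{\lVert v\rVert=1\}$ in $\Gamma_\RR$ is compact, so $Q_x$ is bounded on it by some $M_x<\infty$. By homogeneity,
\[
\hat h_L(\gamma(x))\le M_x\lVert\gamma\rVert^2
\]
for every nonzero $\gamma\in\Gamma$, which proves the claim.

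The only point requiring care is the injectivity of $\Gamma\otimes\RR\to\End_\RR(A)$, that is, that the Weil norm does not degenerate on $\Gamma_\RR$. This is immediate from the freeness argument above. Alternatively, one can give an explicit bound without compactness: by Cauchy--Schwarz for $\langle\cdot,\cdot\rangle_L$, $|B_x(\gamma_i,\gamma_j)|\le\max_i q_x(\gamma_i)$. Then use the equivalence of the $\ell^2$-coordinate norm and $\lVert\cdot\rVert$ on $\Gamma_\RR$. This gives $\hat h_\Gamma(x)\le C_\Gamma\, r\max_i\hat h_L(\gamma_i(x))$, which may be useful later in the paper.
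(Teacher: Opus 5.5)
Your proof is correct and is essentially the paper's argument. Both realize $\gamma\mapsto\hat h_L(\gamma(x))$ as a positive semidefinite quadratic form on a finite-dimensional real vector space and bound it on the compact unit sphere of the Weil norm. The paper differs only cosmetically: it first enlarges $\Gamma$ to $\End(A)$, which can only increase the supremum, and builds the form through the $\RR$-linear extension $\End_\RR(A)\to A(\bar K)\otimes\RR$. You instead work on $\Gamma_\RR$ directly via the N\'eron--Tate pairing and check that $\Gamma_\RR\to\End_\RR(A)$ is injective.
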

\begin{proof}
    Enlarging $\Gamma$ only increases the supremum, so we may assume $\Gamma=\End(A)$. Fix $x\in A(\bar K)$ and let
    \[
    T_x:\End_\RR(A)\to A(\bar K)\otimes_\ZZ\RR
    \]
    be the $\RR$-linear extension of $\gamma\mapsto\gamma(x)$. Its image lies in the finite-dimensional subspace spanned by $\gamma(x)$ for $\gamma$ in a basis of $\End(A)$, on which $\hat h_L$ restricts to a positive semidefinite quadratic form. Hence $Q_x:=\hat h_L\circ T_x$ is a quadratic form on the finite-dimensional real vector space $\End_\RR(A)$. Writing $\mathcal{S}$ for the unit sphere of $(\End_\RR(A),\lVert\cdot\rVert)$, homogeneity gives
    \[
    \hat h_\Gamma(x)=\sup_{\gamma\in\Gamma\setminus\{0\}} \frac{Q_x(\gamma)}{\lVert\gamma\rVert^2}\le\sup_{\gamma\in\mathcal{S}}Q_x(\gamma),
    \]
    which is finite by continuity of $Q_x$ and compactness of $\mathcal{S}$.
\end{proof}

The second function measures the smallest nonzero value of the same form:
\begin{equation}\label{eq:GammaGapDef}
\tilde h_\Gamma(x):=\inf\bigl\{\hat h_L(\gamma(x)): \gamma\in\Gamma,\ \gamma(x)\text{ non-torsion}\bigr\}, \qquad x\in A(\bar K),
\end{equation}
with the convention $\inf\emptyset=\infty$. It will be convenient to record their normalized ratio
\begin{equation}\label{eq:RGammaDef}
R_\Gamma(x):=\max\Bigl\{1,\ \frac{\hat h_\Gamma(x)}{\tilde h_\Gamma(x)}\Bigr\}.
\end{equation}
%The truncation at $1$ is necessary: for $\Gamma=\ZZ\cdot\mathrm{id}$ one has $\lVert\mathrm{id}\rVert^2=2g$, so $\hat h_\Gamma(x)=\hat h_L(x)/2g$ while $\tilde h_\Gamma(x)=\hat h_L(x)$, and the ratio is $1/(2g)<1$.

\begin{defn}
    A point $x\in A(\bar K)$ is \emph{$\Gamma$-torsion} if $\gamma(x)$ is torsion for every $\gamma\in \Gamma$.
\end{defn}
Thus $x$ is $\Gamma$-torsion if and only if $\hat h_\Gamma(x)=0$, if and only if $\tilde h_\Gamma(x)=\infty$.

\subsection{An explicit bound for the complexity}
Let $(A,L)$ be as above. Two quantities attached to the Mordell--Weil group $A(K)$ will appear. The first is the size of the torsion subgroup,
\begin{equation}\label{eq:NKDef}
N_K:=\# A(K)_{\mathrm{tors}},
\end{equation}
and the second is a Lehmer-type lower bound: writing $A(K)_{\mathrm{nt}}:=A(K)\setminus A(K)_{\mathrm{tors}}$,
\begin{equation}\label{eq:hKDef}
h_K:=\min_{x\in A(K)_{\mathrm{nt}}}\hat h_L(x).
\end{equation}
We use the convention $h_K=+\infty$ if $A(K)_{\mathrm{nt}}=\varnothing$.

\begin{theorem}[Gaudron--R\'emond]\label{thm:GR}
    Let $(A,L)$ be a polarized abelian variety of dimension $g$ over a number field $K$, let $D:=[K:\QQ]$, and let $h_F(A)$ be the stable Faltings height. Then
    \begin{enumerate}
        \item $N_K \leq c(g,D,h_F(A))^g$,
        \item $h_K^{-1} \leq D \cdot c(g,D,h_F(A))^{2g}$,
    \end{enumerate}
    where $c(g, D, h_F(A)) := (6g)^8 D\max\{1, h_F(A), \log D\}$.
\end{theorem}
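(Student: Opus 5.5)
This statement is not proved from scratch here. Both parts are quoted from the work of Gaudron and R\'emond, and the plan is to extract them from there, checking that the normalizations agree with ours. The common engine in their work is the explicit \emph{period theorem}. It bounds from below the norm of the smallest nonzero period of a polarized abelian variety $(A,L)$ over a field of degree $D$, and it does so with an explicit dependence on $g$, $D$ and $h_F(A)$ of the shape $c(g,D,h_F(A))=(6g)^8D\max\{1,h_F(A),\log D\}$. Equivalently, it is an explicit isogeny estimate of Masser--W\"ustholz type. I would reduce both (1) and (2) to that estimate, tracking constants only up to the stated form.

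For (1), I would follow the Masser--Bertrand route. Let $x\in A(K)$ be a torsion point of exact order $n$. Then $A':=A/\langle x\rangle$ is $K$-isogenous to $A$. Each point of $A(K)_{\mathrm{tors}}$ produces a cyclic $K$-rational subgroup, so the whole group $A(K)_{\mathrm{tors}}$, which is a product of at most $2g$ cyclic groups, gives an isogeny $A\to A/A(K)_{\mathrm{tors}}$ of degree $N_K$. The dual isogeny theorem says there is an isogeny back whose degree is at most $c(g,D,h_F(A))^{g}$, after normalizing by the polarization. On the other hand, a rational torsion subgroup forces any isogeny in the opposite direction to have degree divisible by a large part of $N_K$; this uses the Galois-stability of the kernel and a minimality argument. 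Combining the two gives $N_K\le c^g$. The exponent $g$ comes from applying the period theorem in dimension $g$, where the relevant lattice has rank $2g$ and one takes a square root.

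For (2), I would take $x\in A(K)$ non-torsion and compare $\hat h_L(x)$ with periods. The standard argument of Masser, as sharpened by David, Gaudron and R\'emond, applies the period theorem, or the isogeny estimate, to the abelian variety $A\times\mathbb G$, or to the extension attached to $x$. Alternatively, one combines a Minkowski-type bound on the Mordell--Weil lattice with a transcendence (auxiliary function) estimate, giving $\hat h_L(x)\ge (D\,c^{2g})^{-1}$. The factor $D$ and the exponent $2g$ come from the degree of the field of definition and the rank $2g$ of the period lattice.

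The main obstacle is bookkeeping rather than ideas. The cited results are stated with the Faltings height normalized as in Gaudron--R\'emond, and with heights attached to symmetric or principal polarizations. I would therefore need to verify that our $\hat h_L$, which is the quadratic part with respect to $\tfrac12(L+[-1]^*L)$, and our stable $h_F(A)$ match theirs up to factors absorbed in $(6g)^8$. In particular, passing from an arbitrary polarization $L$ to a principal one costs a factor that must be controlled or absorbed. If this absorption fails, I would simply state the result with a worse but still explicit constant of the same polynomial shape. That suffices for the later large-Galois-orbit application.
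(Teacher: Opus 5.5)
Your bottom line agrees with the paper. The statement is not proved there; it is quoted from \cite[Thm.~1.2 and Thm.~1.3]{GR} in the stated form for the polarized $(A,L)$, and that citation is the whole proof. No reduction to a principal polarization or other renormalization is carried out.

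The problem is the reconstruction around the citation, which you say you would actually carry out (``I would reduce both (1) and (2) to that estimate''). It does not work as written.
\begin{enumerate}
\item In (1), the decisive step fails. You claim that a $K$-rational torsion subgroup forces every isogeny in the opposite direction to have degree divisible by a large part of $N_K$. This is false: if a finite $K$-rational subgroup $G$ is the kernel of some $\alpha\in\End(A)$, then $A/G\cong A$, so there is an isogeny back of degree $1$. For example, take $E\colon y^2=x^3-x$ over $K=\QQ(i)$. The subgroup $G=\ker(1+i)=\{O,(0,0)\}$ is $K$-rational and $E/G\cong E$. Hence an upper bound for the degree of an isogeny back, combined with this ``divisibility'', gives no bound on $N_K$.
\item You also assert without justification that the isogeny theorem bounds that degree by exactly $c(g,D,h_F(A))^g$.
\item Your paragraph on (2) only lists candidate strategies (the period theorem for an auxiliary abelian variety or extension, Minkowski plus an auxiliary function). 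It gives no argument that produces the bound $D\,c^{2g}$.
\end{enumerate}

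Your fallback of stating the result ``with a worse but still explicit constant'' would prove a different theorem. The downstream use in \Cref{prop:LGO} does only need a bound polynomial in $D$ when $h_F(A)$ is bounded. However, \Cref{thm:LGO} is stated with the precise constant $c(g,D,h_F(A))$, and that constant is available only by citing \cite{GR}. So keep the citation and drop the sketch.
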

\begin{proof}
    See \cite[Thm.~1.2 and Thm.~1.3]{GR}.
\end{proof}

We next record an elementary estimate for independent relations, motivated by \cite{Masser_Linear}. Given $P_1,\ldots,P_m\in A(K)$, with $m\geq1$, we denote by
\[
\Lambda(P_1,\ldots,P_m):=\left\{(c_1,\ldots,c_m)\in\ZZ^m:
\sum_{i=1}^m c_iP_i=0\right\}
\]
the relation group. As before, $|\cdot|$ denotes the standard Euclidean norm on $\RR^m$; the notation $\lVert\cdot\rVert$ remains reserved for the Weil norm on endomorphisms in this subsection.

Our proof follows Masser's geometry-of-numbers argument, using the Euclidean norm in place of the maximum norm. We exploit the quadratic structure of the N\'eron--Tate height to replace his cube-slicing estimate by a direct covolume computation, and then apply Minkowski's second theorem as in his proof. Since we only require independent relations spanning the relation space over $\QQ$, we omit the final passage to a $\ZZ$-basis. These modifications yield the explicit factor $m^{m/2}$ in the bound below.

\begin{lemma}\label{lem:relations}
    Let $(A,L)$ be a polarized abelian variety over a number field $K$. Consider $P_1,\ldots,P_m\in A(K)$, with $m\geq1$, and denote by $G$ the subgroup they generate. Let $\Lambda:=\Lambda(P_1,\ldots,P_m)$ of rank $l:=\operatorname{rank}\Lambda >0$. There exist linearly independent vectors $\vec v_1,\ldots,\vec v_l\in\Lambda$, spanning $\Lambda\otimes\QQ$, such that:
\begin{enumerate}
    \item in the case $l < m$, if $\hat h_L(P_i)\leq q$ for every $i$ and every non-torsion element of $G$ has canonical height at least $h_0>0$, then
    \[
    \prod_{j=1}^l|\vec v_j| \leq m^{\frac{m}{2}}N_K\left(\frac{q}{h_0}\right)^{\frac{m-l}{2}},
    \]

    \item in the case $l=m$, then
    \[
    \prod_{j=1}^m|\vec v_j|\leq m^{\frac{m}{2}}N_K.
    \]
\end{enumerate}
\end{lemma}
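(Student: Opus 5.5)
The idea is to view relations as a lattice and apply Minkowski's second theorem to it. Consider the homomorphism $\Phi:\ZZ^m\to A(K)$, $c\mapsto\sum c_iP_i$, with kernel $\Lambda$, and extend to $\Phi_\RR:\RR^m\to A(K)\otimes\RR$. The pullback $Q(c):=\hat h_L(\Phi_\RR(c))$ is a positive semidefinite quadratic form on $\RR^m$. Its kernel is exactly $\Lambda_\RR$, because $\hat h_L$ is positive definite on $A(K)\otimes\RR$. Set $W:=\Lambda_\RR^{\perp}$ for the standard inner product, of dimension $m-l$. Let $p:\ZZ^m\to W$ be the orthogonal projection. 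Its image $p(\ZZ^m)$ is a lattice in $W$ (image of $\ZZ^m/\Lambda$, discrete since $\Lambda$ is saturated up to finite index in $\Lambda_\RR\cap\ZZ^m$; in fact $\Lambda=\Lambda_\RR\cap\ZZ^m$ is automatically primitive since $\ZZ^m/\Lambda\cong G$ up to torsion issues, and torsion of $G$ is what $N_K$ absorbs). The key identity is
\[
\covol(\Lambda)=\covol(\ZZ^m\cap\Lambda_\RR)\cdot[\ZZ^m\cap\Lambda_\RR:\Lambda]=\covol(p(\ZZ^m))\cdot[\ZZ^m\cap\Lambda_\RR:\Lambda],
\]
using the standard fact that for a primitive sublattice $M\subseteq\ZZ^m$ one has $\covol(M)=\covol(p_{M^\perp}(\ZZ^m))$.

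Next I bound each factor. The index $[\ZZ^m\cap\Lambda_\RR:\Lambda]$ equals the order of the torsion subgroup of $\ZZ^m/\Lambda\cong G$. This torsion lies in $A(K)_{\mathrm{tors}}$, so the index is at most $N_K$. For $\covol(p(\ZZ^m))$ in case (1), I compare volumes in $W$. On $W$ the form $Q$ is positive definite and descends to $G/G_{\mathrm{tors}}$. Every nonzero point of the lattice $p(\ZZ^m)$ is $p(c)$ with $\Phi(c)$ non-torsion, hence has $Q\ge h_0$. Also $Q(e_i)=\hat h_L(P_i)\le q$, so $Q\le m q|\cdot|^2$ on $\RR^m$ by Cauchy--Schwarz applied to the seminorm $\sqrt Q$. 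The estimate I would aim for is
\[
\covol(p(\ZZ^m))\le m^{m/2}(q/h_0)^{(m-l)/2}.
\]
To get it, consider the $Q$-covolume of the lattice $p(\ZZ^m)$ in $W$. By Minkowski's second theorem applied in $(W,Q)$, this $Q$-covolume is at least a constant times $h_0^{(m-l)/2}$ from the minima bound. The Euclidean and $Q$-volumes on $W$ differ by $\det(Q|_W)^{1/2}$, which the eigenvalue bound $Q\le mq$ controls by $(mq)^{(m-l)/2}$.

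The delicate part is the constants. Minkowski introduces unit-ball volumes, and I would try instead to argue with Hadamard's inequality. Choose $c_1,\dots,c_{m-l}\in\ZZ^m$ whose projections form a basis of $p(\ZZ^m)$, reduced with respect to $Q$ so that $\det(Q(p c_i,p c_j))$ is controlled. Then the factor $m^{m/2}$ should absorb the remaining slack, possibly using $m-l\le m$ and $(mq)^{(m-l)/2}\le m^{m/2}q^{(m-l)/2}$. I expect this constant bookkeeping to be the main obstacle, and I would check whether a cruder constant suffices downstream if the exact $m^{m/2}$ is hard to reach.

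Case (2) is immediate: $W=0$ and $\Lambda$ has finite index in $\ZZ^m$ at most $\#G\le N_K$, so $\covol(\Lambda)\le N_K$. Finally, in either case Minkowski's second theorem for $\Lambda$ in $\Lambda_\RR$ gives successive-minima vectors $\vec v_1,\dots,\vec v_l$ with $\prod|\vec v_j|\le 2^l\covol(\Lambda)/V_l$. The constant $2^l/V_l\le l^{l/2}$ fits inside the remaining factor after adjusting the $m^{m/2}$ bookkeeping, which finishes both cases.
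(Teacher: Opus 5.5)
Your proposal has two genuine gaps.

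\textbf{The key identity is inverted.} For a primitive sublattice $M\subseteq\ZZ^m$ one has $\covol(M)\cdot\covol\bigl(p_{M^\perp}(\ZZ^m)\bigr)=1$, not $\covol(M)=\covol(p_{M^\perp}(\ZZ^m))$. The equality you probably have in mind is $\covol(M)=\covol(M^\perp\cap\ZZ^m)$, and the projection is the dual of that lattice. For example, $M=\ZZ(1,1)\subseteq\ZZ^2$ has covolume $\sqrt2$, while $p_{M^\perp}(\ZZ^2)=\ZZ\cdot(\tfrac12,-\tfrac12)$ has covolume $1/\sqrt2$.

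So what you need is a \emph{lower} bound for $\covol(p(\ZZ^m))$. The upper bound you set as your target carries no information. It is in fact trivially true: $\covol(p(\ZZ^m))\le1$ by Hadamard, since the $p(e_i)$ have length at most $1$, and $q\ge h_0$ because some $P_i$ is non-torsion when $l<m$.

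Fortunately, the chain you actually sketch gives exactly such a lower bound. Minkowski in $(W,Q)$ gives $\covol_Q(p(\ZZ^m))\ge (V_r/2^r)h_0^{r/2}$ with $r=m-l$, and you then divide by $\det(Q|_W)^{1/2}$. Once the identity is inverted, your argument has the same architecture as the paper's proof:
\begin{itemize}
\item $\covol(\Lambda)=\#G_{\mathrm{tors}}\cdot\covol(\ZZ^m\cap\Lambda_\RR)$;
\item $\covol(\ZZ^m\cap\Lambda_\RR)=\det(Q|_W)^{1/2}/\covol_{\hat h_L}(G/G_{\mathrm{tors}})$;
\item Minkowski's first theorem for $G/G_{\mathrm{tors}}$;
\item Minkowski's second theorem for $\Lambda$ at the end.
\end{itemize}
Your parenthetical claim that $\Lambda$ is automatically primitive is also false, since $[\ZZ^m\cap\Lambda_\RR:\Lambda]=\#G_{\mathrm{tors}}$. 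Your displayed formula already accounts for this index correctly, however.

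\textbf{The constant $m^{m/2}$ is not reached.} With the operator-norm bound $Q\le mq|\cdot|^2$ you only get $\det(Q|_W)\le(mq)^r$, and the argument ends with
\[
\prod_{j=1}^l|\vec v_j|\le \frac{2^l}{V_l}\cdot\frac{2^r}{V_r}\cdot m^{r/2}\,N_K\Bigl(\frac{q}{h_0}\Bigr)^{r/2}.
\]
This does not imply the claim. For $l=1$ you would need $2^r/V_r\le\sqrt m=\sqrt{r+1}$, which fails for large $m$ because $2^r/V_r$ grows superexponentially in $r$. For instance, $2^9/V_9\approx155>\sqrt{10}$.

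Bookkeeping with Hadamard's inequality will not fix this. The missing input is to bound the determinant through the trace. Since $Q$ vanishes on $\Lambda_\RR=W^\perp$,
\[
\Tr(Q|_W)=\sum_i Q(e_i)=\sum_i\hat h_L(P_i)\le mq,
\]
so AM--GM on the $r$ eigenvalues gives $\det(Q|_W)\le(mq/r)^r$. The factor $r^{-r/2}$ then cancels $2^r/V_r\le r^{r/2}$, yielding $\covol(\Lambda)\le N_K\,m^{r/2}(q/h_0)^{r/2}$. Finally $l^{l/2}m^{r/2}\le m^{m/2}$ gives the stated bound. This is precisely the paper's AM--GM step on the singular values of $f$, using $\Tr(ff^*)\le mq$.

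Your case (2) is correct as written.
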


\begin{proof}
First suppose $l < m$. Let $\mathcal M:=G/G_{\mathrm{tors}}$, viewed as a lattice in $V:=\mathcal M\otimes_\ZZ\RR$, endowed with the Euclidean norm whose square is the canonical height. In this proof, covolumes and adjoints are taken with respect to this metric on $V$ and the standard Euclidean metric on $\RR^m$. Consider the surjective linear map
\[
f:\RR^m\longrightarrow V, \qquad e_i\longmapsto (P_i+ G_{\mathrm{tors}}) \otimes 1.
\]
Its integral kernel $\Lambda_{\mathrm{sat}}:=\ker f\cap\ZZ^m$ is the saturation of $\Lambda$. We set $r:= \dim{V}=m-l$ and denote by $s(f)$ the product of the $r$ nonzero singular values $\sigma_1,\ldots,\sigma_r$ of $f$. The restriction of $f$ to $\ZZ^m$ gives an exact sequence
\[
\begin{tikzcd}
    0 \arrow{r} & \Lambda_{\mathrm{sat}} \arrow{r} & \ZZ^m \arrow{r}{f} & \mathcal M \arrow{r} & 0.
\end{tikzcd}
\]
Denote by $\mathcal Q$ the projection of $\ZZ^m$ onto $(\ker f)^\perp$. Completing a basis of the primitive lattice $\Lambda_{\mathrm{sat}}$ to a basis of $\ZZ^m$ and computing volumes in the orthogonal decomposition $\RR^m=\ker f\oplus(\ker f)^\perp$, we obtain $\operatorname{covol}(\mathcal Q)=\operatorname{covol}(\Lambda_{\mathrm{sat}})^{-1}$. Since applying $f$ multiplies this covolume by $s(f)$, we obtain 
\[
\operatorname{covol}(\Lambda_{\mathrm{sat}}) =\frac{s(f)}{\operatorname{covol}(\mathcal M)}.
\]
Since $\operatorname{Tr}(ff^*) = \sum_{i=1}^m\hat h_L(P_i)\leq mq$, the arithmetic--geometric mean inequality gives
\[
s(f)=\left( \prod_{j=1}^r \sigma_j^2\right)^{\frac{1}{2}} \le \left( \frac{\sum_{j=1}^r \sigma_j^2}{r}\right)^{\frac{r}{2}} \le \left( \frac{mq}{r} \right)^{\frac{r}{2}}.
\]
Moreover, since every nonzero vector of $\mathcal M$ has norm at least $\sqrt{h_0}$, Minkowski's first theorem implies
\[
\operatorname{covol}(\mathcal M) \geq\frac{V_r}{2^r}h_0^{r/2},
\]
where $V_j$ denotes the volume of the Euclidean unit ball in $\RR^j$. Since evaluation at $(P_1,\ldots,P_m)$ induces an isomorphism $\Lambda_{\mathrm{sat}}/\Lambda\simeq G_{\mathrm{tors}}$ and using $V_j\ge (2/\sqrt j)^j$, we finally obtain
\[
\operatorname{covol}(\Lambda) = \#G_{\mathrm{tors}} \cdot \operatorname{covol}(\Lambda_{\mathrm{sat}}) \le N_K m^\frac{r}{2} \left( \frac{q}{h_0} \right)^\frac{r}{2}.
\]
Choose independent vectors $\vec v_1,\ldots,\vec v_l$ realizing the successive minima of $\Lambda$ in its real span. These are $l$ independent vectors spanning $\Lambda\otimes\QQ$ and, by Minkowski's second theorem, they satisfy
\[
\prod_{j=1}^l|\vec v_j| \le\frac{2^l}{V_l}\operatorname{covol}(\Lambda) \le l^{\frac{l}{2}}N_K m^{\frac{r}{2}}\left(\frac{q}{h_0}\right)^{\frac{r}{2}} \le m^{\frac{m}{2}}N_K \left(\frac{q}{h_0}\right)^{\frac{r}{2}}.
\]

If $l=m$, then $G$ is finite and $[\ZZ^m:\Lambda]=\#G\leq N_K$. Applying Minkowski's second theorem to this full-rank lattice provides independent vectors with
\[
\prod_{j=1}^m|\vec v_j| \le \frac{2^m}{V_m}\operatorname{covol}(\Lambda) \le m^{\frac{m}{2}}N_K.
\]
The proof is now complete. 
\end{proof}

The following is the main result of this section, motivated by \cite[Prop.~6.1]{BC_CurvesinAbelianScheme} of the first and second-named authors.

\begin{theorem}\label{thm:LGO}
Let $(A,L)$ be a polarized abelian variety of dimension $g\geq1$ over a number field $K$, let $\Gamma\subseteq\End(A)$ be a subgroup of rank $m\geq1$, and let $\mu_m$ be the $m$-th successive minimum of the Weil norm $\lVert\cdot\rVert$ on $\Gamma$. Set $\mu_m^+:=\max\{1,\mu_m\}$. For every $x\in A(K)$ one has
\begin{equation}\label{eq:LGOrational}
    \Delta_\Gamma(x)\le\Bigl(\frac{3^g\deg_L(A)}{2g}\, m^{2m}(\mu_m^+)^{2m}N_K^2R_\Gamma(x)^{m-1}\Bigr)^g.
\end{equation}
Consequently, for every closed point $x\in A(\bar K)$ one has
\begin{equation}\label{eq:LGO}
    \Delta_\Gamma(x)\le\Bigl(\frac{3^g\deg_L(A)}{2g}\, m^{2m}(\mu_m^+)^{2m}D^{m-1}c^{2gm}\max\{1,\hat h_\Gamma(x)\}^{m-1}\Bigr)^g,
\end{equation}
where $D=[K(x):\QQ]$ and $c=c(g,D,h_F(A))$ is as in \Cref{thm:GR}.
\end{theorem}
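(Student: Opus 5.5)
The plan is to produce explicit endomorphisms in the relation group $I_\Gamma(x)$ via \Cref{lem:relations}, and then to bound the degree of $\langle x\rangle_\Gamma$ using \Cref{prop:degreeBound} together with an intersection (B\'ezout-type) argument.

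\textbf{Relations.} First choose a basis $\gamma_1,\dots,\gamma_m$ of $\Gamma$ with small norms. A reduced basis adapted to the successive minima gives $\lVert\gamma_i\rVert\le m\,\mu_m^+$, possibly up to a harmless constant. Set $P_i:=\gamma_i(x)\in A(K)$. A vector $\vec v=(c_1,\dots,c_m)\in\Lambda(P_1,\dots,P_m)$ corresponds exactly to the endomorphism $\phi_{\vec v}:=\sum c_i\gamma_i\in I_\Gamma(x)$. Its norm satisfies
\[
\lVert\phi_{\vec v}\rVert\le |\vec v|\Bigl(\sum_i\lVert\gamma_i\rVert^2\Bigr)^{1/2}\le \sqrt m\, m\,\mu_m^+|\vec v|.
\]
If $x$ is $\Gamma$-torsion, then $l=m$ and \Cref{lem:relations}(2) applies. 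Otherwise, apply \Cref{lem:relations}(1) with the following inputs:
\[
q:=\max_i\hat h_L(P_i)\le \hat h_\Gamma(x)\max_i\lVert\gamma_i\rVert^2,
\]
and $h_0:=\tilde h_\Gamma(x)$, which is legitimate because every element of $G$ is $\gamma(x)$ for some $\gamma\in\Gamma$. This gives $q/h_0\le m^2(\mu_m^+)^2R_\Gamma(x)$ and $m-l\le m-1$. The outcome is independent $\phi_1,\dots,\phi_l\in I_\Gamma(x)$ spanning $I_\Gamma(x)\otimes\QQ$, with
\[
\prod_j\lVert\phi_j\rVert\ll m^{O(m)}(\mu_m^+)^{m}N_K\,R_\Gamma(x)^{(m-1)/2}.
\]
Since each $|\vec v_j|\ge1$, every partial product of the norms is bounded by the same quantity.

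\textbf{From relations to the closure.} The subgroup $\Gamma'':=\bigoplus\ZZ\phi_j$ has finite index in $I_\Gamma(x)$. The argument of \Cref{lem:compareGamma} then shows that $Z(\{\phi_j\})\supseteq\langle x\rangle_\Gamma$ and that both have the same identity component $B$. Next select at most $g$ of the $\phi_j$ greedily, at each step cutting down the dimension of the identity component of the running intersection, until it equals $B$. Call the result $H:=\ker\phi_{j_1}\cap\dots\cap\ker\phi_{j_k}$ with $k\le g$. Then $\langle x\rangle_\Gamma\subseteq H$, and both are unions of translates of $B$. Hence
\[
\Delta_\Gamma(x)=\#\pi_0(\langle x\rangle_\Gamma)\deg B\le\#\pi_0(H)\deg B=\deg H.
\]

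\textbf{Degree of $H$.} I would bound $\deg H$ by a B\'ezout-type inequality for intersections of subgroups in $A$. Using the embedding by $L^{\otimes 3}$ produces the factor $3^g$, giving
\[
\deg H\le 3^{g}\prod_{i}\deg\ker\phi_{j_i}.
\]
Each factor is controlled by \Cref{prop:degreeBound}:
\[
\deg\ker\phi\le\frac{\deg A}{2g}\max\{1,\lVert\phi\rVert\}^{2g}.
\]
Since at most $g$ factors occur, collecting the bounds gives \eqref{eq:LGOrational}, provided the exponents are rearranged into the form $(\cdots)^g$. I expect this bookkeeping, and making the B\'ezout step precise for possibly nonreduced or nonequidimensional kernels, to be the main obstacle. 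If the naive B\'ezout bound is too lossy, I would instead intersect successively, using that each step only adds components of bounded degree relative to the previous intersection.

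\textbf{Second bound.} For \eqref{eq:LGO}, replace $K$ by $K(x)$, so that $x\in A(K(x))$. Then $\tilde h_\Gamma(x)\ge h_{K(x)}$, and \Cref{thm:GR} gives
\[
N_{K(x)}\le c^g,\qquad h_{K(x)}^{-1}\le Dc^{2g}.
\]
Hence $R_\Gamma(x)\le Dc^{2g}\max\{1,\hat h_\Gamma(x)\}$. Substituting these into \eqref{eq:LGOrational} yields \eqref{eq:LGO}.
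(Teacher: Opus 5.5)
Your outline follows the paper's proof step for step: relations from \Cref{lem:relations}, endomorphisms $\sum_i(\vec v_j)_i\gamma_i$ in the relation group, passage through a finite-index subgroup via \Cref{lem:compareGamma}, at most $g$ kernels chosen by dimension, B\'ezout in the $L^{\otimes3}$-embedding, \Cref{prop:degreeBound}, and \Cref{thm:GR} for \eqref{eq:LGO}. Your greedy selection and the comparison $\Delta_\Gamma(x)\le\deg H$ are fine. The gap is that the statement has explicit constants, and two of your steps as written do not reach them.

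\textbf{The choice of a $\ZZ$-basis of $\Gamma$.} With $\lVert\gamma_i\rVert\le m\mu_m^+$ you get $q/h_0\le m^2(\mu_m^+)^2R_\Gamma(x)$ and $\max\{1,\lVert\phi_j\rVert\}\le m^{3/2}\mu_m^+\lvert\vec v_j\rvert$. Hence
\[
\textstyle\prod_j\max\{1,\lVert\phi_j\rVert\}\le m^{(3m+l)/2}(\mu_m^+)^mN_KR_\Gamma(x)^{(m-1)/2}.
\]
After raising to the power $2g$ this gives $m^{4m}$ where the theorem has $m^{2m}$. A $\ZZ$-basis cannot in general be chosen with all $\lVert\gamma_i\rVert\le\mu_m$, and insisting on one gains nothing, since you pass to a finite-index subgroup of $I_\Gamma(x)$ anyway.

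The fix is to take, as the paper does, linearly independent $\gamma_1,\dots,\gamma_m$ with $\lVert\gamma_i\rVert=\mu_i$. These generate a finite-index $\Gamma'\subseteq\Gamma$; keep $\Gamma$ itself in $\hat h_\Gamma$ and $\tilde h_\Gamma$. Then $q=\mu_m^2\hat h_\Gamma(x)$ and $\max\{1,\lVert\phi_j\rVert\}\le\sqrt m\,\mu_m^+\lvert\vec v_j\rvert$. The product becomes at most
\[
m^{(m+l)/2}(\mu_m^+)^mN_KR_\Gamma(x)^{(m-1)/2}\le m^{m}(\mu_m^+)^mN_KR_\Gamma(x)^{(m-1)/2},
\]
which squares to the stated factor. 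The new $\phi_j$ span $I_{\Gamma'}(x)\otimes\QQ=I_\Gamma(x)\otimes\QQ$, so your identity-component argument is unchanged.

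\textbf{The B\'ezout bookkeeping.} The inequality $\deg H\le 3^g\prod_i\deg\ker\phi_{j_i}$ is not what B\'ezout gives. In the embedding by $L^{\otimes3}$ each kernel enters as $\deg_{L^{\otimes3}}\ker\phi=3^{\dim\ker\phi}\deg_L\ker\phi$, so powers of $3$ accumulate factor by factor.

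The clean route is to write $\Delta_\Gamma(x)\le\deg_LH\le\deg_{L^{\otimes3}}H\le\prod_{i=1}^k\deg_{L^{\otimes3}}\ker\phi_{j_i}$. Then apply \Cref{prop:degreeBound} to $(A,L^{\otimes3})$, using that the Weil norm is unchanged when $L$ is replaced by $L^{\otimes3}$. Each factor is at most $\frac{3^g\deg_L(A)}{2g}\max\{1,\lVert\phi_{j_i}\rVert\}^{2g}$. Since $k\le g$ and $3^g\deg_L(A)/(2g)\ge1$, this is exactly where the outer $(\cdots)^g$ and the $3^g$ inside it come from.

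Your worry about non-reduced or non-equidimensional kernels is unfounded. In characteristic $0$ group schemes are reduced, and every component of $H$ or of a kernel is a translate of its identity component, so the usual B\'ezout inequality applies.

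With these two corrections your argument, including the derivation of \eqref{eq:LGO}, coincides with the paper's. In that derivation, note that $R_\Gamma(x)=1\le Dc^{2g}$ in the $\Gamma$-torsion case.
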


\begin{proof}
Let $\mu_1\leq\cdots\leq \mu_m$ be the successive minima of the Weil norm $\lVert \cdot \rVert$ on $\Gamma$. Choose linearly independent elements $\gamma_1,\ldots,\gamma_m\in\Gamma$ spanning $\Gamma\otimes\QQ$, with $\lVert\gamma_i\rVert=\mu_i$ for all $i$. Let $\Gamma'$ be the finite-index sublattice of $\Gamma$ generated by $\gamma_1,\ldots,\gamma_m$. We keep $\Gamma$ fixed in the height quantities below.

Define $P_i:=\gamma_i(x)$ for $1\leq i\leq m$. We denote by $G\subseteq A(K)$ the subgroup generated by $P_1,\ldots,P_m$ and by $\Lambda=\Lambda(P_1,\cdots,P_m) \subseteq \ZZ^m$ the relation group. Set $l:=\operatorname{rank}\Lambda$ and $r:=m-l$. The result is trivial if $l=0$, hence we assume $l>0$.

Assume $l<m$ and set
\[
q:=\mu_m^2\hat h_\Gamma(x),\qquad h_0:=\tilde h_\Gamma(x)>0.
\]
By \eqref{eq:GammaSingularValue}, we have $\hat h_L(P_i)\le \lVert \gamma_i\rVert^2 \hat h_\Gamma(x)\le q$. Moreover, every non-torsion element of $G$ has height at least $h_0$, since it belongs to $\Gamma'(x)\subseteq\Gamma(x)$. Thus, by \Cref{lem:relations}, there exist linearly independent vectors $\vec v_1,\ldots,\vec v_l\in\Lambda$, spanning $\Lambda\otimes\QQ$, such that
\[
\prod_{j=1}^l|\vec v_j| \le m^{\frac{m}{2}}N_K \left(\frac{\mu_m^2\hat h_\Gamma(x)} {\tilde h_\Gamma(x)}\right)^{\frac{r}{2}}.
\]
For $1\leq j\leq l$, set $\tau_j:=\sum_{i=1}^m(\vec v_j)_i\gamma_i$. These are nonzero elements of $I_{\Gamma'}(x)$ and span $I_{\Gamma'}(x)\otimes\QQ$. Since $|\vec v_j|\ge 1$, the triangle inequality and Cauchy--Schwarz give
\[
\max\{1,\lVert\tau_j\rVert\} \leq\sqrt m\,\mu_m^+|\vec v_j|.
\]
Since $l<m$, we obtain
\begin{equation}\label{eqn:boundMink}
\prod_{j=1}^l\max\{1,\lVert\tau_j\rVert\} \le m^{\frac{m+l}{2}}(\mu_m^+)^lN_K \left(\frac{\mu_m^2\hat h_\Gamma(x)}{\tilde h_\Gamma(x)}\right)^{\frac{r}{2}}\le m^m(\mu_m^+)^mN_KR_\Gamma(x)^{\frac{m-1}{2}}.
\end{equation}
In the case $l=m$, the point $x$ is $\Gamma$-torsion. Choose $\vec v_1,\ldots,\vec v_m$ using \Cref{lem:relations} and define $\tau_j$ as above. Since $R_\Gamma(x)=1$, the same estimate gives \eqref{eqn:boundMink}.

Since $I_{\Gamma'}(x)$ has finite index in $I_\Gamma(x)$, the elements $\tau_1,\ldots,\tau_l$ also generate a finite-index subgroup of $I_\Gamma(x)$. By \Cref{lem:compareGamma} and for dimension reasons, there exists a subsequence $\tau_1',\ldots,\tau_k'$ of $\tau_1,\ldots,\tau_l$, with $k\leq g$, such that
\[
\langle x\rangle_\Gamma^0 =\left(\bigcap_{j=1}^l\ker\tau_j\right)^0 =\left(\bigcap_{j=1}^k\ker\tau_j'\right)^0.
\]
The corresponding group schemes are nested and have the same dimension, so
\[
\Delta_\Gamma(x) =\deg_L\langle x\rangle_\Gamma \leq\deg_L\left(\bigcap_{j=1}^l\ker\tau_j\right) \leq\deg_L\left(\bigcap_{j=1}^k\ker\tau_j'\right).
\]
Notice that the line bundle $L^{\otimes3}$ is very ample and that both sides of \Cref{eqn:boundMink} do not change if we replace the polarization by a multiple of $L$, e.g. $L^{\otimes3}$. By B\'ezout's theorem and \Cref{prop:degreeBound},
\begin{align*}
\Delta_\Gamma(x) &\le \deg_L\left(\bigcap_{j=1}^l \ker\tau_j\right) \le \deg_{L^{\otimes3}}\left(\bigcap_{j=1}^k \ker\tau'_j\right) \le \prod_{j=1}^k\deg_{L^{\otimes3}}(\ker\tau'_j) \\
&\le \left( \frac{\deg_{L^{\otimes3}}(A)}{2g} \right)^k \prod_{j=1}^k\max\{1,\lVert\tau'_j\rVert\}^{2g} \le \left( \frac{3^g \deg_L(A)}{2g} \right)^g \left( \prod_{j=1}^l\max\{1,\lVert\tau_j\rVert\} \right)^{2g} \\
&\leq \left( \frac{3^g \deg_L(A)}{2g} m^{2m}(\mu_m^+)^{2m}N_K^2R_\Gamma(x)^{m-1} \right)^g.
\end{align*}
This proves \eqref{eq:LGOrational}. Finally, by \Cref{thm:GR}, we have $N_{K(x)}\leq c^g$, where $D:=[K(x):\QQ]$ and $c:=c(g,D,h_F(A))$. If $x$ is not $\Gamma$-torsion, then $\tilde h_\Gamma(x)\ge h_{K(x)}$ and $h_{K(x)}^{-1}\leq Dc^{2g}$, whence
\[
R_\Gamma(x) \le \max\{1,\hat h_\Gamma(x)Dc^{2g}\} \leq\max\{1,\hat h_\Gamma(x)\}\,Dc^{2g}.
\]
Notice that the last bound also holds when $x$ is $\Gamma$-torsion, since then $R_\Gamma(x)=1$ and $Dc^{2g}\ge 1$. Applying \eqref{eq:LGOrational} over $K(x)$ proves \eqref{eq:LGO}. The proof is now complete.
\end{proof}

\subsection{Large Galois orbits}
We now apply \Cref{thm:LGO} uniformly in an abelian scheme. On a set of bounded total height, \eqref{eq:LGO} gives a polynomial lower bound for the size of the Galois orbit of a point in terms of its complexity.

Let $S$ be a normal quasi-projective variety over a number field $K$, and let $\pi:\calA\to S$ be an abelian scheme of relative dimension $g\geq 1$, polarized by a relatively ample line bundle $\calL$. Fix a Weil height $h_S:S(\bar K)\to\RR_{\geq 0}$ associated with a projective embedding of $S$, and let $\hat h_\calL:\calA(\bar K)\to\RR_{\geq 0}$ be the fiber-wise N\'eron--Tate height. The \emph{total height} is
\[
h_{\mathrm{tot}} :=\hat h_\calL+h_S\circ\pi: \calA(\bar K)\to\RR_{\geq 0}.
\]
We take $\Gamma:=\End(\calA/S)$, viewed as a common subgroup of the endomorphism algebras of the fibers via the specialization injections $\End(\calA/S)\hookrightarrow\End(\calA_s)$. For a point $x\in\calA(\bar K)$ over $s\in S(\bar K)$, the $\Gamma$-complexity of $x$ is computed in the polarized fiber $(\calA_s,\calL_s)$.

\begin{prop}\label{prop:LGO}
Let $K,S,\calA,\calL,h_{\mathrm{tot}},\Gamma$ be as above, and let $\delta>0$. There exist constants $c_0,\epsilon_0>0$, depending only on these data and on $\delta$, such that every $x\in\calA(\bar K)$ with $h_{\mathrm{tot}}(x)<\delta$ satisfies
\[
[K(x):\QQ]\geq c_0\Delta_\Gamma(x)^{\epsilon_0}.
\]
\end{prop}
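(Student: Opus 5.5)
The plan is to apply \eqref{eq:LGO} of \Cref{thm:LGO} fiberwise, with $A=\calA_s$, $L=\calL_s$ and $\Gamma=\End(\calA/S)$ specialized into $\End(\calA_s)$. Every quantity on the right-hand side then has to be bounded polynomially in $D=[K(x):\QQ]$, uniformly over points of bounded total height. Once this is done, the bound takes the form $\Delta_\Gamma(x)\le C\,D^{N}(\log D+1)^{N'}$ with constants depending only on the data and $\delta$. This inverts to $D\ge c_0\Delta_\Gamma(x)^{\epsilon_0}$ for any $\epsilon_0<1/N$, with $c_0$ adjusted to absorb small $D$.

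The quantities in \eqref{eq:LGO} that depend on $s$, and how I would control each, are as follows.

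(i) The rank $m$ of $\Gamma$ is fixed.

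(ii) $\deg_{\calL_s}(\calA_s)$ is the intersection number $(\calL_s^g)$. It is constant in flat families on each connected component of $S$, and $S$ is irreducible, so it is a constant.

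(iii) For the successive minimum $\mu_m$ of the Weil norm on $\Gamma$, fix a $\ZZ$-basis $\phi_1,\dots,\phi_m$ of $\Gamma$. Then $\mu_m\le\max_i\lVert\phi_i\rVert_s$, where $\lVert\phi_i\rVert_s^2=\Tr(\phi_i^\dagger\phi_i)$ is computed with the Rosati involution of $\calL_s$. The Rosati involution comes from $\lambda_s^{-1}\phi^\vee\lambda_s$, an algebraic family of $\QQ$-endomorphisms. The trace of the rational representation of the endomorphism $\deg(\lambda_s)\phi_i^\dagger\phi_i$ is locally constant on $S$, since $\rho_r$ is a flat map of local systems. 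Hence $\lVert\phi_i\rVert_s$ is constant in $s$ and $\mu_m^+$ is a constant.

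(iv) The height bound needs two estimates. First, $h_{\mathrm{tot}}(x)<\delta$ gives $h_S(s)<\delta$. Second, $\hat h_\Gamma(x)$ must be bounded. Because $\hat h_{\calL_s}(\gamma(x))$ is a quadratic form in $\gamma\in\Gamma\otimes\RR$, Cauchy--Schwarz over the basis gives $\hat h(\gamma(x))\le m\sum_i a_i^2\hat h(\phi_i(x))$ for $\gamma=\sum a_i\phi_i$. Moreover, $\sum a_i^2\le c\lVert\gamma\rVert^2$, since $\lVert\cdot\rVert_s$ is a constant norm by (iii). It then remains to bound $\hat h(\phi_i(x))$. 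Here $\phi_i^*\calL$ is relatively ample up to a bounded multiple of $\calL$ plus a pullback from $S$, in the sense of Néron--Tate heights. Concretely, $\hat h(\phi(x))\le c_\phi\hat h(x)$ fiberwise, with $c_\phi$ bounded by (iii) via $\hat h(\phi x)\le \lVert\phi\rVert^2_{\mathrm{op}}\hat h(x)$ in the polarization metric on $A(\bar K)\otimes\RR$. So $\hat h_\Gamma(x)\ll\hat h(x)<\delta$.

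(v) By \Cref{thm:GR}, $c=c(g,D,h_F(\calA_s))$ requires a bound on $h_F(\calA_s)$. Since the Faltings height of the fiber is bounded by a Weil height on the base, up to the usual comparison with a modular height (after passing to the period map into $\AAA_g$, with a level structure after finite base change), we get $h_F(\calA_s)\ll h_S(s)+1$. In fact $h_F(\calA_s)\le c_1\max\{1,h_S(s)\}+c_2\log\max\{1,h_S(s)\}$ (Faltings/Bost, or Pazuki's comparison). So $h_F$ is bounded, and $c\ll D\log(3D)$.

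Substituting these bounds gives $\Delta_\Gamma(x)\le C\bigl(D\log(3D)\bigr)^{g(m-1)+2g^2m}$, and the statement follows. A subtlety is that \Cref{thm:LGO} needs $\Gamma\subseteq\End(A)$ with $A$ defined over $K(x)$. Endomorphisms of $\calA/S$ are defined over $K$ after enlarging $K$ once, which changes $D$ by a bounded factor.

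I expect the main obstacle to be step (iv)/(v): the uniform comparison between fiberwise Néron--Tate heights under endomorphisms, and the bound $h_F(\calA_s)\ll 1+h_S(s)$ over a general normal quasi-projective $S$. The latter is not intrinsic when the period map has positive-dimensional fibers. I would handle it by reducing to the level-3, principally polarized case: take an isogeny and a finite étale cover, which changes Faltings heights by bounded amounts. Then I would compare $h_S$ with the pullback of a modular height on $\AAA_g$, which is bounded above by $h_S$ up to constants by functoriality of Weil heights.
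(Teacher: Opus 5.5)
Your proposal is correct and follows essentially the same route as the paper. You apply \eqref{eq:LGO} fiberwise over $K(x)$, note that $m$, $\deg_{\calL_s}(\calA_s)$ and the Weil norm (hence $\mu_m^+$) do not depend on $s$, bound $\hat h_\Gamma(x)\ll\hat h_\calL(x)<\delta$ and $h_F(\calA_s)\ll\max\{1,h_S(s)\}$, and absorb the logarithmic factor into a power of $D$. The only difference is that you sketch the two height comparisons yourself (via the Rosati adjoint and a modular-height comparison), whereas the paper simply cites \cite[Prop.~2.9]{Ge_IntersectI} and \cite[\S5.5]{YuanZhang_ALB}.
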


\begin{proof}
The Weil norm on $\Gamma$ is independent of $s$ so that its successive minima is constant in each fiber. The quadratic pullback map $\gamma\mapsto[\gamma^*\calL]$ and compactness of the unit ball give a uniform comparison (see \cite[Prop.~2.9]{Ge_IntersectI})
\[
\hat h_\Gamma(x)\leq c_1\hat h_\calL(x)\leq c_1\delta.
\]
Moreover, by \cite[\S5.5]{YuanZhang_ALB}, we have
\[
h_F(\calA_s)\leq c_2\max\{1,h_S(s)\} \leq c_2\max\{1,\delta\}.
\]

Put $D:=[K(x):\QQ]$. Since $K(s)\subseteq K(x)$, we may apply \Cref{thm:LGO} to $\calA_s$ over $K(x)$. The bounds above imply
\[
c(g,D,h_F(\calA_s))\ll D(1+\log D),
\]
while all remaining factors in \eqref{eq:LGO}, apart from $D$, are uniformly bounded. Absorbing the logarithmic factor into a power of $D$, we obtain
\[
\Delta_\Gamma(x)\leq C D^N
\]
for constants $C,N>0$ depending only on the fixed data and on $\delta$. Taking $\epsilon_0=1/N$ and $c_0=C^{-1/N}$ proves the assertion.
\end{proof}

\section{Unlikely intersections and nondegeneracy}

In this section, we prove the main result of the article. 

\begin{theorem}\label{thm:main}
    Let $t\in \ZZ_{>0}$. Let $S$ be a normal quasi-projective variety and let $\calA/S$ be an abelian scheme, both defined over a number field $K$. Let $\calX$ be a positive dimensional subvariety of $\calA$ over $K$ and let $\calA_{<t}$ denote the union of all flat subgroup schemes of $\calA$ of relative dimension $<t$. Then, the intersection $(\calX \setminus \calX^\textnormal{deg}(t)) \cap \calA_{<t}$ is finite. In particular, if $\calX$ is $t$-nondegenerate, the intersection $\calX\cap \calA_{<t}$ is not Zariski dense in $\calX$.
\end{theorem}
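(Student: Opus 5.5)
We will follow the Pila--Zannier strategy outlined in the introduction. We begin with reductions. After a finite étale base change of $S$, which changes neither finiteness nor the degeneracy loci (cf.\ \Cref{rmk:descent} and the functoriality in \cite{Ge_IntersectI}), we may assume that $\calA/S$ carries a principal polarization and a symplectic level-$3$-structure; replacing $\calA$ by an isogenous scheme if needed, the polarization may be taken principal. Via the Cartesian diagram \eqref{eqn:modularMaps} we may then work inside $\gothA_g|_{\iota_S(S)}$. Flat group subschemes, $\delta_v$ and finiteness are compatible with this passage, and the period map is quasi-finite on the relevant locus. We may also assume that $\calX$ is irreducible; otherwise we treat each component separately, noting that the degeneracy locus of a component is contained in that of $\calX$.

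Next we bound heights. Let $x\in(\calX\setminus\calX^{\deg}(t))\cap\calA_{<t}$. Then $x\in\calA_{\le t-1}\subseteq\calA_{\le t}$, so \Cref{thm:totHeight} gives $h_{\mathrm{tot}}(x)<\delta$ for some fixed $\delta$, provided $\calX$ is $t$-nondegenerate. If instead $\calX=\calX^{\deg}(t)$, the set is empty. Now set $\Gamma=\End(\calA/S)$. The flat group subscheme $\calH\ni x$ of relative dimension $<t$ is contained in some $\Gamma$-special scheme: its identity component is an abelian subscheme $\calB$, and there is $\phi\in\Gamma$ with $\ker^0\phi=\calB$, since abelian subschemes correspond to symmetric idempotents. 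Hence $\dim\langle x\rangle_\Gamma\le t-1$, with the torsion part of $\calH$ accounted for by multiplication by integers. By \Cref{prop:LGO}, every Galois conjugate $x^\sigma$ has the same complexity $\Delta:=\Delta_\Gamma(x)$, and $[K(x):\QQ]\ge c_0\Delta^{\epsilon_0}$.

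The counting step comes next. Choose, via the proof of \Cref{thm:LGO}, an element $\phi\in I_\Gamma(x)$ with $\ker^0\phi=\langle x\rangle_\Gamma^0$, so that $g_\phi\le t-1$, and with $\lVert\phi\rVert\ll\Delta^{O(1)}$; this requires a combination $\sum\tau_j'^\dagger\tau_j'$ with bounded coefficients. Then $M_r(\phi)$ has entries of size $\ll\Delta^{O(1)}$. For each conjugate $x^\sigma$, choose a lift $z_\sigma\in\gothF\cap(\CC^g\times\widetilde S)$. Then $M_r(\phi)\widetilde{\gothb}(z_\sigma)\in\ZZ^{2g}$, because $\phi(x^\sigma)=0$; more precisely it is a point of $\ZZ^{2g}$ of height $\ll\Delta^{O(1)}$, where the Betti coordinates are bounded on $\gothF$. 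Here we must take care that the Siegel domain part of $\gothF$ does not enlarge the heights. To handle this, we restrict to a compact piece of the base after noting that bounded height alone does not confine $\tau$, which forces a standard argument with heights near the boundary; \Cref{lem:periodNorm} enters to choose integral representatives. Each conjugate then gives a point of $F(\calX)_{M_r(\phi)}^\sim(T)$ with $T\ll\Delta^{O(1)}$, where $w:=M_r(\phi)$-rational preimage data are encoded rationally in the $w$-coordinate. Distinct conjugates give distinct $\pi_2$-images up to a bounded multiplicity. Since the number of conjugates is at least $c_0\Delta^{\epsilon_0}$, choosing $\varepsilon$ small in \Cref{prop:HP} yields, for $\Delta$ large, a definable arc on which the rational coordinate is semialgebraic and $M_r(\phi)\widetilde{\gothb}$ is constant, hence lies in a semialgebraic set of dimension at most $1$. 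The arc starts at a conjugate of $x$. By \Cref{prop:transcendence}, that conjugate lies in $\calX^{\deg}(g_\phi+1)\subseteq\calX^{\deg}(t)$, which is defined over $K$. So $x$ does as well, a contradiction.

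We conclude as follows. Consequently $\Delta_\Gamma(x)$ is bounded, so $\langle x\rangle_\Gamma$ ranges over specializations of finitely many flat group subschemes $Z(\Gamma_0)$ of bounded degree and relative dimension $<t$. Each component of $\calX\cap Z(\Gamma_0)$ is either a point or is positive-dimensional with $\delta_v<t$, and hence lies in $\calX^{\deg}(t)$. This gives finiteness. The main obstacle is the counting step: we must control the heights of the rational data uniformly, in particular for the Betti coordinates and the $\tau$-coordinate on the noncompact fundamental set, together with the bound $\lVert\phi\rVert\ll\Delta^{O(1)}$. We would first try to handle both using \Cref{lem:periodNorm} together with the Minkowski-reduced $\tau_j$ from \Cref{thm:LGO}.
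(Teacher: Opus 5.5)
Your architecture matches the paper's: reduction to $S\subseteq\AAA_g$, bounded total height via \Cref{thm:totHeight}, \Cref{prop:LGO}, Habegger--Pila, \Cref{prop:transcendence}, and finally bounded complexity. The gap is in the counting step, which you yourself call the main obstacle: it is not carried out, and the mechanism you propose for it fails as stated. You need the integral vectors $M_r(\phi)\widetilde{\gothb}(z_\sigma)$ to have height $\ll\Delta^{O(1)}$, and for this you invoke $\lVert\phi\rVert\ll\Delta^{O(1)}$ ``via the proof of \Cref{thm:LGO}''. That proof gives nothing of the kind. It bounds $\prod_j\max\{1,\lVert\tau_j\rVert\}$ by $m^m(\mu_m^+)^mN_{K(x)}R_\Gamma(x)^{(m-1)/2}$, which is polynomial in $D=[K(x):\QQ]$ on a set of bounded height, not in $\Delta=\Delta_\Gamma(x)$. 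Moreover, the degree of a kernel does not control the norm of an endomorphism cutting it out (\Cref{rmk:sharp}(2)). So your rational points cannot be played off against the $c_0\Delta^{\epsilon_0}$ conjugates. Your Siegel-domain worry is a red herring: on $\gothF$ the Betti coordinates lie in $[-1/2,1/2]^{2g}$ by construction, and $\tau$ is never a rational coordinate. Also, Habegger--Pila makes the rational coordinate semialgebraic along the arc; it does not make $M_r(\phi)\widetilde{\gothb}$ constant.

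The missing idea is that no bound on $\phi$ is needed at all. In \eqref{eq:defFamily} the matrix $\psi=M_r(\phi)$ is a parameter, and the constant in \Cref{prop:HP} is uniform in $\psi$. What must be rational of small height is a point of the coset $x'_\sigma+\ker\psi$. The paper constructs it as follows:
\begin{enumerate}
\item Take any $\phi$ with $\ker^0\phi=\langle x\rangle_\Gamma^0=\calB_s$.
\item Write the component of $\langle x\rangle_\Gamma$ through $x$ as $q+\calB_s$, with $q$ torsion of order $n\le\Delta$.
\item Lift $q_\sigma$ over the same $\tau_\sigma$ to $q'_\sigma\in n^{-1}\ZZ^{2g}$.
\item Use \Cref{lem:periodNorm} to get $\lambda'_\sigma\in\ZZ^{2g}$ with $|\lambda'_\sigma|\le\sqrt{2g}+c\deg_\calL\calB$.
\end{enumerate}
Then $w_\sigma:=q'_\sigma+\lambda'_\sigma$ satisfies $\psi w_\sigma=\psi x'_\sigma$ and $H(w_\sigma)\le C\Delta^2$. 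You also cannot place all lifts in $\gothF$ over one fixed $\widetilde S$. The paper splits $p(\gothF)\cap u_g^{-1}(S^{\mathrm{an}})$ into finitely many connected pieces $C_j\subseteq\widetilde S_j$, with matrices $\psi_j=M_{r,\widetilde S_j}(\phi)$, and pigeonholes.

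Alternatively, your own integral data can be rescued by measuring everything in $D$ rather than $\Delta$. The estimate in the proof of \Cref{thm:LGO} gives $\lVert\sum_j\tau_j^\dagger\tau_j\rVert\ll D^{O(1)}$ on bounded height. Counting then bounds $D$, and Northcott for the total height gives finiteness directly. That is a valid route, but it is not the argument you wrote.

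Finally, your reduction to $S\subseteq\AAA_g$ is asserted rather than proved. Quasi-finiteness of the modular map $f$ on $\calX\setminus\calX^{\deg}(t)$ does not by itself give $f(x)\notin\calY^{\deg}(t)$ for $\calY=f(\calX)^{\mathrm{Zar}}$. The reason is that a component through $x$ of the preimage of a degenerate $\calZ\subseteq\calY$ need not dominate $\calZ$. The paper handles this by working on a finite \'etale locus and inducting on $\dim\calX$ (\Cref{lem:modularFinitenessReduction}).
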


We first reduce the problem to the case where $S$ is a locally closed smooth subvariety of the moduli space. We prove this essential case following the Pila--Zannier strategy using the Pila--Wilkie counting theorem. The key ingredients in the proof are \Cref{prop:LGO}, the bounded height theorem of the third-named author~\cite{Ge_IntersectI}, and the Ax--Schanuel type result~\Cref{prop:AxScCor}.

\subsection{Reductions}
First, we make some elementary observations:
\begin{enumerate}
    \item \Cref{thm:main} is equivalent for two isogenous abelian schemes.
    
    \item \Cref{thm:main} for $\calA/S$ is equivalent to that for $\calA'/S'$ where $S'\rightarrow S$ is a finite \'etale cover and $\calA'=\calA\times_S S'$.
    
    \item Suppose $\calB\hookrightarrow \calA$ is an abelian subscheme. If \Cref{thm:main} holds for $\calA/S$, then it also holds for $\calB/S$, since $\calB\cap\calA_{<t}=\calB_{<t}$.
\end{enumerate}

If $\calB$ is an isogeny component of $\calA$, then by the observations (1) and (3), it suffices to prove the result for $\calA$. By Zarhin's trick~\cite{Zarhin_AV}, we know $(\calA\times_S\calA^\dual)^4$ can be principally polarized. Hence, without loss of generality, we can reduce to assume that $\calA/S$ is principally polarized. In the meantime, thanks to the freedom provided by (2), we may take a finite \'etale base change to assume that $\calA/S$ can be equipped with a level-$3$-structure which can be chosen to be symplectic by our convention $\QQ[\zeta_3]\subseteq K$; see \cite[Lem.~2.2]{Ge_IntersectI}. Overall, it boils down to the case where $\calA/S$ is \emph{principally polarized with a symplectic level-$3$-structure}. In particular, there is a modular map $\iota_S:S\rightarrow \AAA_g$ to the moduli space $\AAA_g$ of principally polarized abelian varieties with symplectic level-$3$-structures over $K$. 

The final reduction is to the case where $S$ is a locally closed smooth subvariety of $\AAA_g$:

\begin{lemma}\label{lem:modularFinitenessReduction}
    Let $\AAA_g$ be the moduli space of principally polarized abelian varieties with symplectic level-$3$-structures over a number field $K$. If \Cref{thm:main} holds for any locally closed smooth subvariety $T\subseteq\AAA_g$ and the induced abelian scheme, then it holds for any  principally polarized $\calA/S$ over $K$ with a symplectic level-$3$-structure.
\end{lemma}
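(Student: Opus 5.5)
We reduce to the case where the modular map is the inclusion of a locally closed smooth subvariety by stratifying the image and comparing the relevant objects through the Cartesian diagram \eqref{eqn:modularMaps}. Let $\iota_S:S\to\AAA_g$ be the modular map and $\iota_\calA:\calA\to\gothA_g$ the induced map, so that $\calA=S\times_{\AAA_g}\gothA_g$. The image $\iota_S(S)$ is constructible, so we stratify it into finitely many locally closed smooth irreducible subvarieties $T_1,\dots,T_n$ defined over $K$, and set $S_i:=\iota_S^{-1}(T_i)$. This gives a finite partition of $S$ into locally closed subsets. We may refine it so that each $\iota_S|_{S_i}:S_i\to T_i$ is flat, with all fibers of constant dimension.

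\textbf{Comparing group subschemes.} The first step is to note that $\iota_\calA$ is fiberwise an isomorphism of abelian varieties $\calA_s\cong\gothA_{g,\iota_S(s)}$. We must relate flat group subschemes on both sides. Let $\calH\subseteq\calA$ be a flat group subscheme of relative dimension $<t$. Its identity component is an abelian subscheme, and over the generic point of a component it corresponds to an abelian subvariety of $\calA_\eta$. Since $\End(\calA_\eta)=\End(\calA_{\bar\eta})$ by level-$3$, this is the image of a symmetric idempotent in $\End(\calA/S)\otimes\QQ$. We want to show that, on each stratum, $\iota_\calA(\calH|_{S_i})$ lies in a flat group subscheme of $\gothA_g|_{T_i}$ of the same relative dimension. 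The key point is that $\End(\calA/S)$ is the monodromy-invariant part of $\End$ of a fiber, and the monodromy of $\calA|_{S_i}$ has finite index in that of $\gothA_g|_{T_i}$, since the fibers of $\iota_S$ carry isotrivial families. So, after passing to a finite étale cover of $T_i$ (allowed by observation (2)), endomorphisms descend.

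\textbf{Comparing degeneracy loci.} Set $\calX_i:=\calX\cap\pi^{-1}(S_i)$ and $\calX_i':=\overline{\iota_\calA(\calX_i)}\subseteq\gothA_g|_{T_i}$. The second step is to show that a point $x\in\calX_i\setminus\calX^{\deg}(t)$ maps outside $(\calX'_i)^{\deg}(t)$, except along a controlled locus. Suppose $\iota_\calA(x)$ lies in a positive-dimensional $\calY'\subseteq\calX'_i$ with $\delta_v(\calY')<t$. Take an irreducible component $\calY$ of $\iota_\calA^{-1}(\calY')\cap\calX$ through $x$. Its weakly special closure maps into $\calY'^{\ws}$, so $\rdim\calY^{\ws}\le\rdim\calY'^{\ws}$, and $\dim\calY\ge\dim\calY'$, since $\iota_\calA$ restricted to $\calX_i$ is dominant onto $\calX'_i$ with fibers of dimension $\ge 0$. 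Hence $\delta_v(\calY)\le\delta_v(\calY')<t$, and $\dim\calY>0$, so $x\in\calX^{\deg}(t)$, which is a contradiction. Here the fiber dimension inequality works in the favorable direction: extra fiber dimension only lowers $\delta_v$.

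\textbf{Concluding, and the main obstacle.} Apply \Cref{thm:main} for $T_i$ to $\calX'_i$; an irreducible component is harmless, so we may treat each component separately. This yields finitely many image points. The step I expect to be the main obstacle is finiteness of the preimages. If $\iota_\calA$ has positive-dimensional fibers on $\calX_i$ (isotrivial directions), then a fiber $F$ through $x$ in $\calX_i\cap\iota_\calA^{-1}(\text{pt})$ has $\delta_v(F)\le 0-\dim F<t$, because its image is a point, so $F^{\ws}$ is a constant section with $\rdim=0$. Thus such $x$ already lie in $\calX^{\deg}(t)$. The remaining preimages over each of the finitely many points are therefore finite, and we obtain finitely many points on each stratum, hence in total. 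Care is needed with the fact that the strata $\calX_i$ are only locally closed. Points of $\calX^{\deg}(t)$ computed for the closure versus for $\calX$ should agree by \Cref{rmk:ws-vs-mixed-shimura}-type irreducibility, and this is the other point I would check carefully.
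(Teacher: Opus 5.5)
Your overall plan matches the paper's: transport flat subgroup schemes via endomorphisms, pull back degenerate subvarieties of the image, and discard positive-dimensional fibres of $\iota_\calA|_{\calX}$ as constant sections with $\delta_v<0$. The preimage-finiteness step you flag as the main obstacle is in fact fine.

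\textbf{The gap.} It lies in the degeneracy comparison. You take an irreducible component $\calY$ of $\iota_\calA^{-1}(\calY')\cap\calX$ through $x$ and claim $\dim\calY\ge\dim\calY'$ because $\iota_\calA|_{\calX_i}$ is dominant onto $\calX'_i$. Dominance only controls components of preimages through general points. It says nothing about the component through a prescribed point $x$, and $\calY'\subseteq\calX'_i$ need not even meet $\iota_\calA(\calX_i)$ in a dense set.

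For example, let $\iota_S$ be a finite \'etale double cover onto its image. The deck involution lifts to $\tilde\sigma$ on $\calA$ with $\iota_\calA\circ\tilde\sigma=\iota_\calA$. Suppose $\calX\neq\tilde\sigma(\calX)$ and $x\in\calX\cap\tilde\sigma(\calX)$; then $\calX'$ has in general two local branches at $\iota_\calA(x)$. Take $\calW\subseteq\calX$ positive-dimensional with $\delta_v(\calW)<t$, passing through $\tilde\sigma(x)$ but not through $x$, and put $\calY'=\iota_\calA(\calW)$. Near $x$, the set $\iota_\calA^{-1}(\calY')\cap\calX$ is $\tilde\sigma(\calW)\cap\calX$, which may be just $\{x\}$. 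Such points lie over $(\calX'_i)^{\deg}(t)$ without lying in $\calX^{\deg}(t)$, and nothing in your argument controls them. Stratifying the base and making $S_i\to T_i$ flat does not help: the bad behaviour comes from $\calX_i\to\calX'_i$, not from $S_i\to T_i$.

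\textbf{The missing idea} is the paper's shrinking-plus-induction.
\begin{enumerate}
\item Dispose of the case $\dim\calX>\dim\calX'$, where every point lies on a positive-dimensional fibre.
\item Choose a dense open $V\subseteq\calX'$ such that $f:=\iota_\calA|_{\calX}$ restricts to a finite \'etale map $U:=f^{-1}(V)\to V$.
\item For $x\in U$ and $\calZ\ni f(x)$ with $\delta_v(\calZ)<t$, flatness (going-down) forces every component of $f^{-1}(\calZ\cap V)$ through $x$ to dominate $\calZ\cap V$. So its closure $\calZ'$ satisfies $\dim\calZ'=\dim\calZ$, and your weakly-special-closure inequality gives $\delta_v(\calZ')\le\delta_v(\calZ)<t$.
\item Handle the points of $\calX\setminus U$ by induction on $\dim\calX$, using that $\calW^{\deg}(t)\subseteq\calX^{\deg}(t)$ for every subvariety $\calW\subseteq\calX$. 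This monotonicity also settles your worry about locally closed strata.
\end{enumerate}

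\textbf{A side remark.} Your monodromy/finite-cover detour for endomorphisms is unnecessary. Its stated reason (isotriviality along fibres of $\iota_S$) is also not what gives finite index; that comes from dominance onto a normal base. The level-$3$ structure and normality of the bases already give $\End(\gothA_g|_T/T)\xrightarrow{\sim}\End(\calA/S)$ once $S\to T$ is dominant. This yields $\iota_\calA(\calA_{<t})\subseteq(\gothA_g|_T)_{<t}$ directly.
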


\begin{proof}
Induct on $\dim\calX$. The $\dim\calX=0$ case is trivial.

Let $\gothA_g\to\AAA_g$ be the universal abelian scheme, and let $\iota:\calA\to\gothA_g$ be the modular map. After restricting to a dense open subset of $\calX$ and replacing $S$ by a smooth dense open subset of $\pi(\calX)$, we may assume that $\calX$ dominates $S$. Shrinking once more, we may also assume that $T:=\iota_S(S)$ is a smooth locally closed subvariety of $\AAA_g$. Put $\calB:=\gothA_g|_T$. Then
\[
\calA=\calB\times_T S, \qquad \iota:\calA\longrightarrow\calB.
\]

We first note that $\iota(\calA_{<t})\subseteq\calB_{<t}$. Indeed, the level-$3$-structure ensures that all geometric generic endomorphisms of $\calB$ are defined over $K(T)$. Together with extension of generic endomorphisms over normal bases, this gives $\End(\calB/T)\xrightarrow{\sim}\End(\calA/S)$; see \cite[\S2.3]{Ge_IntersectI}. For any flat subgroup scheme $\mathcal H\subseteq\calA$, there is $\alpha\in\End(\calA/S)$ such that
\[
\mathcal H\subseteq\ker\alpha, \qquad \rdim\ker\alpha=\rdim\mathcal H.
\]
Descending $\alpha$ to $\alpha_T\in\End(\calB/T)$ shows that $\iota(\mathcal H)\subseteq\ker\alpha_T$, a flat subgroup scheme of the same relative dimension. This proves the claimed inclusion.

Set $\calY:=\iota(\calX)^{\mathrm{Zar}}\subseteq\calB$ and $f:=\iota|_{\calX}:\calX\longrightarrow\calY$. Every positive-dimensional irreducible component $F$ of a fiber of $f$ is a constant section over $\pi(F)$, because $\iota$ identifies each fiber of $\calA/S$ with the corresponding fiber of $\calB/T$. Consequently,
\[
\rdim F^{\textnormal{ws}}=0, \qquad \delta_v(F)=-\dim F<0,
\]
and hence $F\subseteq\calX^{\textnormal{deg}}(0)$. If $\dim\calX>\dim\calY$, every point of $\calX$ lies on such a positive-dimensional fiber component. Thus $\calX=\calX^{\textnormal{deg}}(0)$, and there is nothing to prove.

We may therefore assume that $f$ is generically finite. Choose a dense open subset $V\subseteq\calY$ such that, with $U:=f^{-1}(V)$, the morphism $f|_U:U\longrightarrow V$ is finite \'etale. We claim that
\[
U\cap f^{-1}\bigl(\calY^{\textnormal{deg}}(t)\bigr) \subseteq\calX^{\textnormal{deg}}(t).
\]
Let $x$ belong to the left-hand side, and choose a positive-dimensional irreducible subvariety $\calZ\subseteq\calY$ containing $f(x)$ with $\delta_v(\calZ)<t$. Let $\calZ'$ be the closure in $\calX$ of an irreducible component through $x$ of $f^{-1}(\calZ\cap V)$. Since $f|_U$ is finite \'etale, this component dominates $\calZ\cap V$, and $\dim\calZ'=\dim\calZ$. Pulling back the weakly special closure of $\calZ$ gives
\[
\rdim{(\calZ')^{\textnormal{ws}}} \leq\rdim{\calZ^{\textnormal{ws}}},
\]
by \cite[Lem.~3.3]{Ge_IntersectI}. Therefore
\[
\delta_v(\calZ') \leq\rdim{\calZ^{\textnormal{ws}}}-\dim\calZ =\delta_v(\calZ)<t.
\]
This proves the claim.

By the assumed case of \Cref{thm:main}, the set $(\calY\setminus\calY^{\textnormal{deg}}(t)) \cap\calB_{<t}$ is finite. The two inclusions proved above show that $(U\setminus\calX^{\textnormal{deg}}(t)) \cap\calA_{<t}$ maps into this finite set. Since $f|_U$ is finite, this intersection is finite as well. Finally, induction handles the proper closed subset $\calX\setminus U$, completing the proof.
\end{proof}

To sum up, we are left to show the following:

\begin{prop}\label{prop:main}
   Let $t\in \ZZ_{>0}$. Let $\AAA_g$ be the moduli space of principally polarized abelian varieties with symplectic level-$3$-structures over a number field $K$, and let $S\subseteq \AAA_g$ be a locally closed smooth subvariety. Let $\calA/S$ be the induced abelian scheme. For any subvariety $\calX$ of $\calA$, the intersection $(\calX \setminus \calX^\textnormal{deg}(t)) \cap \calA_{<t}$ is finite.
\end{prop}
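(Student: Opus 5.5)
We follow the Pila--Zannier strategy outlined in the introduction. We may assume $\calX$ is irreducible, defined over $K$, and (by induction on $\dim\calX$, as in \Cref{lem:modularFinitenessReduction}) that $\calX$ is $t$-nondegenerate; otherwise $\calX\setminus\calX^{\deg}(t)$ is empty. Since $\calA_{<t}\subseteq\calA_{\le t}$, \Cref{thm:totHeight} shows that every $x\in\Sigma:=(\calX\setminus\calX^{\deg}(t))\cap\calA_{<t}$ has $h_{\mathrm{tot}}(x)<\delta$ for a fixed $\delta$. Set $\Gamma=\End(\calA/S)$. By \Cref{prop:LGO}, every $x\in\Sigma$ satisfies $[K(x):\QQ]\ge c_0\Delta_\Gamma(x)^{\epsilon_0}$. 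The goal is to bound $\Delta_\Gamma(x)$ uniformly on $\Sigma$. If this holds, then only finitely many $\Gamma$-special closures arise. Up to finite index these closures are finitely many flat subgroup schemes $\ker\phi$ with $g_\phi<t$: for $x\in\calH$ flat of relative dimension $<t$, \Cref{lem:modularFinitenessReduction} provides $\alpha\in\Gamma$ with $\calH\subseteq\ker\alpha$ and $\rdim\ker\alpha<t$, so $\dim\langle x\rangle_\Gamma<t$. The intersection of $\calX$ with each of these subgroup schemes has positive-dimensional components inside $\calX^{\deg}(t)$, as noted after \Cref{thm:mainIntro}, and its zero-dimensional part is finite. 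This gives finiteness.

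\textbf{Encoding the relation in Betti coordinates.} For $x\in\Sigma$ of large complexity, take $\phi\in I_\Gamma(x)$ with $g_\phi=\dim\langle x\rangle_\Gamma<t$, chosen from the relations $\tau_j$ produced in the proof of \Cref{thm:LGO}. By \Cref{prop:degreeBound} and \eqref{eqn:boundMink}, $\lVert\phi\rVert$ and $M_r(\phi)$ then have height polynomial in $[K(x):\QQ]$. Take a lift $z\in\gothF$ of $x$ over $\widetilde S$, adjusted via the deck group so that $z$ lies in the fixed fundamental set. Since $\phi(x)$ is torsion of order $N\le\Delta_\Gamma(x)$, the vector $M_r(\phi)\widetilde{\gothb}(z)$ is rational with denominator $N$ and numerator bounded polynomially. \Cref{lem:periodNorm} is available to pick small integral representatives of the relevant cosets. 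The Galois conjugates $x^\sigma$ lie in $\Sigma$ because $\calX^{\deg}(t)$ is defined over $K$ (\Cref{rmk:descent}). They satisfy the conjugate relations $\phi$, and since $\Gamma$ is defined over $K$ these are the same $\phi$. We obtain at least $c_0\Delta^{\epsilon_0}$ distinct points $z^\sigma\in F(\calX)_{\psi}$ with $\psi=M_r(\phi)$ and $w=\widetilde{\gothb}(z^\sigma)$. Their rational coordinates $\psi w$ have height $\le T=\Delta^{\kappa}$ for a fixed $\kappa$. To fit \Cref{prop:HP}, I would reparametrize $F(\calX)$ so that the rational coordinate is $\psi w$ itself, bounded by $T$, and the $\pi_2$-coordinate is $z$. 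Finitely many deck translates of $\gothF$ may be needed because $\widetilde S$ meets several fundamental domains; I would handle this by working with the union of finitely many definable pieces.

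\textbf{Counting and conclusion.} Choose $\varepsilon<\epsilon_0/\kappa$. If $\Delta$ is large, \Cref{prop:HP} yields a definable arc $\beta$, real-analytic on $(0,1)$, along which $\psi\widetilde{\gothb}$ is semialgebraic of dimension $\le1$ and $z$ is nonconstant, with $u_{g,a}(\beta(0))$ a conjugate $x^\sigma\in\Sigma$. \Cref{prop:transcendence} then gives $u_{g,a}(\beta)\subseteq\calX^{\deg}(g_\phi+1)\subseteq\calX^{\deg}(t)$, contradicting $x^\sigma\notin\calX^{\deg}(t)$. Hence $\Delta_\Gamma$ is bounded on $\Sigma$.

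\textbf{Main obstacle.} The hardest step is the bookkeeping in the encoding step. It requires simultaneously that $\phi$ realize $g_\phi<t$ and that its matrix height be polynomial in $\Delta$, so that $\psi$ is admissible as a family parameter (it is, since \Cref{prop:HP} is uniform in $\psi$). It also requires that the rational heights of $\psi\widetilde{\gothb}(z^\sigma)$ be polynomial in $\Delta$, with the lifts kept in a fixed definable fundamental set. A further subtlety is that $\Gamma$-closures may be defined only up to finite index (\Cref{lem:compareGamma}), so the degree comparison and the bound on the torsion order $N$ must be made uniform. I would handle this using the explicit products in \eqref{eqn:boundMink}.
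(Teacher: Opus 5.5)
There is a genuine gap in the encoding step. Nothing you cite gives the bound $H(\psi w)\le T=\Delta^{\kappa}$ with $\Delta=\Delta_\Gamma(x)$. For $\phi\in I_\Gamma(x)$ the vector $\psi_j x'_\sigma=M_r(\phi)\widetilde{\gothb}(z^\sigma)$ is integral, and its size is governed by the entries of $M_r(\phi)$, i.e.\ by $\lVert\phi\rVert$. With $\phi$ built from the $\tau_j$ of the proof of \Cref{thm:LGO}, your only control is \eqref{eqn:boundMink}. Its right-hand side involves $N_{K(x)}$ and $R_\Gamma(x)$, so it is polynomial in $D:=[K(x):\QQ]$ (via \Cref{thm:GR} and bounded height), as you yourself say one sentence earlier. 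To convert this into a power of $\Delta$ you would need $D\ll\Delta^{O(1)}$. But \Cref{prop:LGO} only gives the reverse inequality $\Delta\ll D^{O(1)}$, and in general the norm of an endomorphism is not controlled by the degree of its kernel (\Cref{rmk:sharp}(2)). So comparing $c_0\Delta^{\epsilon_0}$ conjugates against heights $\ll D^{\kappa}$ never yields $\#\pi_2(\Sigma)>c_{\mathrm{HP}}T^{\varepsilon}$. The products in \eqref{eqn:boundMink}, which you invoke for the main obstacle, control the wrong parameter. Separately, a single $\tau_j$ need not satisfy $g_{\tau_j}=\dim\langle x\rangle_\Gamma$. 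You can take instead $\phi=\sum_j\tau_j^\dagger\tau_j$: it lies in $I_\Gamma(x)$ because $\Gamma$ is Rosati-stable in the principally polarized case, and $\ker^0\phi=\langle x\rangle_\Gamma^0$ by positivity.

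The argument can be repaired in either of two ways.

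\emph{Keep your encoding and count in $D$.} There are at least $D/[K:\QQ]$ conjugates $x_\sigma$, all killed by the same $\phi$ (which is defined over $K$), and their integral data $\psi_jx'_\sigma$ have height $\ll D^{\kappa}$ uniformly. Then \Cref{prop:HP} with $\varepsilon<1/\kappa$, together with \Cref{prop:transcendence}, bounds $D$ on $\Sigma$. Finiteness follows from \Cref{thm:totHeight} and Northcott, so \Cref{prop:LGO} and your final subgroup-scheme step become unnecessary.

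\emph{Keep $\Delta$ as the parameter, as the paper does.} Here the rational datum is made independent of $\lVert\phi\rVert$.
\begin{itemize}
\item $\phi$ is any element with $\ker^0\phi=\langle x\rangle_\Gamma^0$; its size is irrelevant because $\psi=M_r(\phi)$ is only a family parameter.
\item The component of $\langle x\rangle_\Gamma$ through $x$ is written as $q+\calB_s$ with $\ord(q)\le\Delta$, and conjugated to $x_\sigma-q_\sigma\in\calB_{\sigma(s)}$.
\item \Cref{lem:periodNorm} then produces a rational point $w_\sigma=q'_\sigma+\lambda'_\sigma$ of the coset $x'_\sigma+\ker\psi_j$ with $H(w_\sigma)\le C\Delta^2$.
\end{itemize}
This is why $F(\calX)$ is cut out by $\psi w=\psi\widetilde{\gothb}(z)$ with $w$, not $\psi w$, as the rational coordinate. \Cref{prop:LGO} then supplies the matching count $\gg\Delta^{\epsilon_0}$.
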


\subsection{Proof of \Cref{prop:main}}

We may assume $\calX$ to be positive-dimensional. If $\calX^{\mathrm{deg}}(t)=\calX$, there is nothing to prove. Hence, we assume $\calX$ to be $t$-nondegenerate, and consequently we have $1\le t \le g-\dim\calX$.

Fix $x\in(\calX\setminus\calX^{\mathrm{deg}}(t))\cap\calA_{<t}$ and $s=\pi_g(x).$ Let $\Gamma \subseteq \End(\calA_s)$ be the specialized endomorphism algebra from $\End(\calA/S)$. The identity component of the $\Gamma$-special closure of $x$ is given by
\[
\langle x \rangle^0_\Gamma=\ker^0{\gamma} \qquad \textnormal{for some } \gamma \in \Gamma.
\]
The element $\gamma$ extends to an endomorphism $\phi \in \End(\calA/S)$, that determines an abelian subscheme $\calB/S$ of relative dimension $g_\calB < t$ such that $\calB_s=\langle x \rangle^0_\Gamma$. Call $M$ the number of connected components of $\langle x \rangle_\Gamma$. The connected component of $\langle x \rangle_\Gamma$ containing $x$ is of the type $q + \calB_s$, where $q$ is a torsion point in $\calA_s$ of order $n \le M$. We claim that $x$ is algebraic over $K$. Put $\calH:=[n]^{-1}\calB$. Then $\calH$ is a flat subgroup scheme of relative dimension $g_{\calB}$ containing $x$. Every positive-dimensional irreducible component $Y$ of $\calX\cap\calH$ satisfies
\[
\delta_v(Y)=\rdim Y^{\textnormal{ws}}-\dim Y
\leq g_{\calB}-\dim Y<t.
\]
Hence $Y\subseteq\calX^{\deg}(t)$, so $x$ is isolated
in the intersection $\calX\cap\calH$. Therefore
$x\in\calA(\bar K)$, and also $q\in\calA_s(\bar K)$.
For every $\sigma\in\operatorname{Gal}(\bar K/K)$, put
$x_\sigma:=\sigma(x)$ and $q_\sigma:=\sigma(q)$. Since $\calB$
is defined over $K$, we have
\[
x_\sigma-q_\sigma\in\calB_{\sigma(s)}
=(\ker^0\phi)_{\sigma(s)},\qquad
\operatorname{ord}(q_\sigma)=n.
\]
    
We now use the fundamental set $\gothF$ defined \Cref{subsec:o-minimality} and we denote by $p:\gothF \to \gothH_g$ the projection on the second component. The definable set $p(\gothF)\cap u_g^{-1}(S^{\mathrm{an}})$ has finitely many connected components $C_1,\ldots,C_N$. Each $C_j$ is contained in one connected component $\widetilde S_j$ of $u_g^{-1}(S^{\mathrm{an}})$; on it the relative rational representation of $\phi$ is the single matrix $\psi_j:=M_{r,\widetilde S_j}(\phi)$, as explained in \Cref{rmk:rrrMatrix}. Let $F_{j,\psi}$ be the restriction of the definable family $F(\calX)_\psi$ in \eqref{eq:defFamily} to points whose base belongs to $C_j$. These are finitely many definable families, uniformly parametrized by $\psi$.

For every distinct conjugate $x_\sigma$ choose $\tau_\sigma\in p(\gothF)$ above $\sigma(s)$, and lifts $\widetilde{x_\sigma},\widetilde{q_\sigma}\in\gothF$ over the \emph{same} $\tau_\sigma$. Denote their Betti vectors by $x'_\sigma,q'_\sigma\in[-1/2,1/2]^{2g}$. We may take $q'_\sigma\in n^{-1}\ZZ^{2g}$, and the standard Euclidean norm satisfies $|x'_\sigma-q'_\sigma|\leq\sqrt{2g}$. If $\tau_\sigma\in C_j$, the inclusion $x_\sigma-q_\sigma\in\calB_{\sigma(s)}$ gives
\[
x'_\sigma-q'_\sigma\in \ZZ^{2g}+\ker(\psi_j:\RR^{2g}\to\RR^{2g}).
\]
Moreover, by \Cref{lem:dictionary},
\[
\Lambda_j:=H_1(\calB_{\sigma(s)}^{\mathrm{an}},\ZZ) =\ker(\psi_j:\ZZ^{2g}\to\ZZ^{2g}),\qquad (\Lambda_j)_\RR=\ker(\psi_j:\RR^{2g}\to\RR^{2g}).
\]
Applying \Cref{lem:periodNorm} on each $\widetilde S_j$, and taking the maximum of its finitely many constants, yields $c>0$ and $\lambda'_\sigma\in\ZZ^{2g}$ such that
\[
x'_\sigma-q'_\sigma-\lambda'_\sigma\in(\Lambda_j)_\RR, \qquad |\lambda'_\sigma|\leq\sqrt{2g}+c \deg_\calL \calB.
\]
Considering the multiplicative height $H$ and the $\Gamma$-complexity $\Delta_\Gamma(x)$ of $x$, we obtain the inequalities
\[
H(q'_{\sigma})\leq \ord(q) \qquad \textnormal{and} \qquad \max\{\ord(q), \deg_\calL {\calB}\} \le \Delta_\Gamma(x).
\]
Notice that $w_\sigma:=q'_\sigma+\lambda'_\sigma\in\QQ^{2g}$. Thus, for some constant $C\geq1$ independent of $x,\sigma$, we have
\begin{equation}\label{eqn:naiveHeight}
H(w_\sigma)\leq \ord(q)(1+\sqrt{2g}+c \deg_\calL {\calB}) \leq C\Delta_\Gamma(x)^2.
\end{equation}
As $\psi_jw_\sigma=\psi_jx'_\sigma$, the pair $(w_\sigma,\widetilde{x_\sigma})$ belongs to $F_{j,\psi_j}$.

Since $x\notin \calX^{\mathrm{deg}}(t)$, by \cite[Thm.~8.4]{Ge_IntersectI} and \Cref{prop:LGO}, there exist $c_1, \epsilon_0 >0$ such that
\begin{equation}\label{eq:propMainLGO}
    [K(x):K] \ge c_1\Delta_\Gamma(x)^{\epsilon_0}.
\end{equation}

Set $T:=C\Delta_\Gamma(x)^2$, write $\operatorname{pr}_2$ for the second projection, and partition the pairs above into
\[
\Sigma_j:=\{(w_\sigma,\widetilde{x_\sigma}): \tau_\sigma\in C_j\} \subseteq F_{j,\psi_j}^{\sim}(T) \qquad(1\leq j\leq N).
\]
Since distinct conjugates have distinct lifts, for some $j$
\[
\#\operatorname{pr}_2(\Sigma_j) \geq\frac{c_1}{N}\Delta_\Gamma(x)^{\epsilon_0}.
\]

Put $\varepsilon:=\epsilon_0/4$, and let $c_{\mathrm{HP}}$ be the maximum of the constants in \Cref{prop:HP} for the finitely many families $F_j$ and this $\varepsilon$. Choose
\[
\ell>\max\left\{1, \left(\frac{Nc_{\mathrm{HP}}C^\varepsilon}{c_1} \right)^{2/\epsilon_0}\right\}.
\]
If $\Delta_\Gamma(x)\geq \ell$, then $\#\operatorname{pr}_2(\Sigma_j)> c_{\mathrm{HP}}T^\varepsilon$. Thus \Cref{prop:HP} gives a definable arc $\beta=(\beta_1,\beta_2):[0,1]\to F_{j,\psi_j}$ such that $\beta_1$ is semi-algebraic, $\beta_2$ is nonconstant, $\beta$ is real-analytic on $(0,1)$, and $\beta_2(0)=\widetilde{x_\sigma}$ for some conjugate $x_\sigma$ of $x$.

By \eqref{eq:defFamily}, for every $u\in[0,1]$ we have
\[
\psi_j\,\widetilde{\gothb}(\beta_2(u)) =\psi_j\,\beta_1(u).
\]
The image $\gamma:=\psi_j\beta_1([0,1])$ is semi-algebraic of dimension at most one. The arc $\beta_2$ lies over $\widetilde S_j$, so \Cref{prop:transcendence} applies directly to $\beta_2$ and the nonzero endomorphism $\phi$. Since $g_\phi=\rdim{\ker\phi}=g_\calB<t$ and $t$ is an integer, it yields
\[
x_\sigma\in u_{g,a}(\beta_2([0,1])) \subseteq\calX^{\mathrm{deg}}(g_\calB+1) \subseteq\calX^{\mathrm{deg}}(t).
\]
By \Cref{rmk:descent}, $\calX^{\mathrm{deg}}(t)$ is defined over $K$, contradicting the choice of $x$. Therefore $\Delta_\Gamma(x)<\ell$ uniformly.

Finally, only finitely many abelian subschemes $\calB\subseteq\calA$ have fiber degree $\deg_\calL {\calB}<\ell$.
% Indeed, specialize to one fixed complex fiber: by
% \Cref{lem:covolumeDegree}, bounded degree bounds
% the covolume of the period lattice. Minkowski's theorem
% then yields finitely many possible rational period
% subspaces and hence, by \Cref{lem:dictionary}, finitely
% many abelian subvarieties of that fiber. A relative
% abelian subscheme is determined by its period sublattice
% on one fiber. 
Also $n\leq\Delta_\Gamma(x)<\ell$. Thus every point under consideration belongs to one of finitely many flat group subschemes
\[
[n]^{-1}\calB=\calB+\calA[n], \qquad \rdim\calB<t.
\]
For each such subgroup, every positive-dimensional irreducible component $Y$ of its intersection with $\calX$ has $\delta_v(Y)\leq\rdim{\calB}-\dim Y<t$. Hence the points of $\calX\setminus\calX^{\mathrm{deg}}(t)$ in that intersection are isolated. A finite-type intersection has only finitely many isolated points, which proves the assertion.
\qed

\section{Applications}
In this final section, we give two applications of our main theorem.
The first concerns intersections with division groups generated by
sections. We formulate a \emph{relative Mordell--Lang conjecture},
motivated by the relative Manin--Mumford theorem~\cite{GH_RMM} and
the classical Mordell--Lang theorem~\cite{Faltings_GeneralCase},
and prove a case of it. The second application concerns injectivity
of specialization of a finitely generated group of sections.

\subsection{Relative Mordell--Lang}
\label{subsec:relativeML}

Let $S$ be a smooth quasi-projective variety
over a number field $K$, and let $\pi:\calA\to S$ be an abelian
scheme of relative dimension $g$. Let $\calX\subseteq\calA$ be an
irreducible closed subvariety dominating $S$, with geometrically irreducible generic fiber.

For a finitely generated subgroup $\Lambda\subseteq\calA(S)$,
denote its specialization at $s\in S$ by $\Lambda_s$. Define the division group by
\[
\Lambda_s^{\mathrm{div}}
:=\{x\in\calA_s:[N]x\in\Lambda_s
       \text{ for some }N\geq1\},
\]
and put
\[
\calM(\calX,\Lambda)
:=\bigcup_{s\in S(\bar K)}
\bigl(\calX_s(\bar K)\cap\Lambda_s^{\mathrm{div}}\bigr).
\]

The classical Mordell--Lang theorem concerns intersections with
a group of finite rank in a fixed abelian variety. In a family,
new relations may appear at special parameters even when the
generic intersection is empty. 

Assume that $\calX_{\bar\eta}$ is \emph{transverse with respect to
$\Lambda_{\bar\eta}^{\mathrm{div}}$}, that is,  for every
$a\in\Lambda_{\bar\eta}^{\mathrm{div}}$ and every strict abelian
subvariety $H\subsetneq\calA_{\bar\eta}$, we have
\begin{equation}\tag{T}\label{cond_transverse}
\calX_{\bar\eta}\not\subseteq a+H.
\end{equation}
We propose the following \emph{relative Mordell--Lang conjecture}.

\begin{conjecture}\label{conj:rml-sp}
If $\dim\calX<g$, then $\calM(\calX,\Lambda)$ is not Zariski
dense in $\calX$.
\end{conjecture}

When $S$ is a point, this follows from the classical
Mordell--Lang theorem for groups of finite rank. When $\Lambda=0$,
condition~\eqref{cond_transverse} excludes containment of the
geometric generic fiber in a strict torsion coset, and the
conjecture is the corresponding relative Manin--Mumford statement, established by Gao--Habegger~\cite{GH_RMM}.

\begin{prop}\label{prop:pink-implies-rml}
Pink's \Cref{conj:Pink} implies \Cref{conj:rml-sp}.
\end{prop}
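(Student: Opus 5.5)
Reduce to Pink's conjecture by encoding the generators of $\Lambda$ as extra coordinates, as in the outline of the applications. After replacing $\Lambda$ by a finite-index subgroup (harmless, since $\Lambda_s^{\mathrm{div}}$ is unchanged) assume $\Lambda$ is free with basis $\sigma_1,\dots,\sigma_r\in\calA(S)$. Put $\calA':=\calA^{r+1}=\calA\times_S\cdots\times_S\calA$, of relative dimension $g(r+1)$, and
\[
\calX':=\{(x,\sigma_1(s),\dots,\sigma_r(s)) : s\in S,\ x\in\calX_s\}\subseteq\calA',
\]
the image of $\calX$ under $x\mapsto(x,\sigma_1(\pi(x)),\dots,\sigma_r(\pi(x)))$. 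It is isomorphic to $\calX$, dominates $S$, has geometrically irreducible generic fiber, and $\dim\calX'=\dim\calX=:d$.

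Then relate $\calM(\calX,\Lambda)$ to $\calX'\cap\calA'_{(<g(r+1)-d)}$. If $x\in\calX_s\cap\Lambda_s^{\mathrm{div}}$, there are $N\ge1$ and $a_i\in\ZZ$ with $Nx=\sum a_i\sigma_i(s)$, so $(x,\sigma_1(s),\dots,\sigma_r(s))$ lies in the kernel of $(y_0,\dots,y_r)\mapsto Ny_0-\sum a_iy_i$, an algebraic subgroup of $\calA'_s$ of dimension $gr$. Since $gr<g(r+1)-d$ exactly when $d<g$, the point lies in $\calA'_{(<g(r+1)-d)}$. Thus, since $\calX'\cong\calX$, it suffices that $\calX'\cap\calA'_{(<g(r+1)-d)}$ is not Zariski dense in $\calX'$, which is \Cref{conj:Pink} for $\calX'\subseteq\calA'$ provided $\calX'$ is not contained in a strict group subscheme of $\calA'$.

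The main step is checking this hypothesis from \eqref{cond_transverse}; it suffices to work on the geometric generic fiber. Suppose $\calX'_{\bar\eta}\subseteq G$ for a proper algebraic subgroup $G\subsetneq\calA_{\bar\eta}^{r+1}$. Then $G^0$ is contained in the kernel of a nonzero homomorphism $\Phi=(\phi_0,\dots,\phi_r):\calA_{\bar\eta}^{r+1}\to\calA_{\bar\eta}$ with $\phi_i\in\End(\calA_{\bar\eta})$ (compose the quotient by $G^0$ with a projection and an isogeny). As $\calX'_{\bar\eta}$ is irreducible, it lies in one coset $c+G^0$, and some multiple $M c$ lies in $G^0$; so $M\Phi$ is constant on $\calX'_{\bar\eta}$ and takes the value $0$. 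Replacing $\Phi$ by $M\Phi$, we get $\phi_0(x)=-\sum_i\phi_i(\sigma_i)$ for all $x\in\calX_{\bar\eta}$. If $\phi_0\neq0$, let $H:=\ker^0\phi_0\subsetneq\calA_{\bar\eta}$, and pick an isogeny-type quasi-inverse $\psi$ with $\psi\phi_0$ acting as multiplication by $N$ modulo $H$. Then $Nx\in-\sum\psi\phi_i(\sigma_i)+H$. Here is the delicate point: we need $\psi\phi_i(\sigma_i)\in\Lambda_{\bar\eta}^{\mathrm{div}}$, which holds only if $\Lambda$ is stable under enough endomorphisms. I would first enlarge $\Lambda$ to the $\End(\calA/S)$-module it generates, which is still finitely generated. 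The previous paragraph then still works, using general $\gamma_i\in\End$ coefficients instead of integers, with kernel still of dimension $gr$. This would give $\calX_{\bar\eta}\subseteq a+H$ with $a\in\Lambda_{\bar\eta}^{\mathrm{div}}$. If transversality is only assumed for the original $\Lambda$, one has to check directly that the coset produced lies over the original division group.

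If instead $\phi_0=0$, then $\sum\phi_i(\sigma_i)=0$ is a relation among the sections. In that case $\calX'_{\bar\eta}$ lies in a subgroup of the form $\calA_{\bar\eta}\times G_1$, and one reduces $r$ by replacing $\Lambda$ with a smaller free basis. So I would arrange at the outset that $\sigma_1,\dots,\sigma_r$ are $\End$-linearly independent modulo torsion, so that this case cannot occur. Then \Cref{conj:Pink} applies and the proposition follows.
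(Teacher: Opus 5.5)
There is a genuine gap. Your reduction applies \Cref{conj:Pink} to the graph $\calX'$ inside the full power $\calA^{r+1}$. That requires $\calX'$ not to lie in any strict group subscheme of $\calA^{r+1}$, and this does not follow from \eqref{cond_transverse}; it fails in general.

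Here is a counterexample. Take $\calA=\mathcal{E}\times_S\mathcal{E}$, where $\mathcal{E}$ is a non-isotrivial elliptic scheme over a curve $S$ with independent non-torsion sections $P,P'$. Let $\Lambda=\ZZ\sigma$ with $\sigma=(P,0)$, and let $\calX=Q(S)$ with $Q=(P',P)$.
\begin{itemize}
\item The hypotheses hold. We have $\dim\calX=1<2=g$. Condition \eqref{cond_transverse} also holds: a relation $Q-a\in H$ with $a\in\Lambda_{\bar\eta}^{\mathrm{div}}$ and $H$ strict would, after multiplying by an integer, give a nontrivial relation between $P$ and $P'$, since $\End(\mathcal{E}_{\bar\eta})=\ZZ$.
\item Yet $\calX'=\{(Q(s),\sigma(s))\}$ lies in the strict subgroup scheme $\calA\times_S(\mathcal{E}\times_S\{0\})$.
\item Choosing a better basis does not help. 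Here $\Lambda\otimes\QQ=\QQ\sigma$, and $\sigma$ is killed by the nonzero endomorphism $(a,b)\mapsto(0,b)$. So no basis of $\Lambda$, or of any finite-index subgroup, is $\End$-linearly independent modulo torsion. The $\End_\QQ$-module spanned by $\Lambda$ need not be free, and a relation with zero-divisor coefficients cannot be removed by lowering $r$ without changing $\Lambda^{\mathrm{div}}$.
\item Enlarging $\Lambda$ to $\End(\calA/S)\cdot\Lambda$ is not allowed either. The enlarged group contains $(0,P)$, and $Q-(0,P)=(P',0)\in\mathcal{E}\times\{0\}$, so \eqref{cond_transverse} fails for it. Transversality for $\Lambda$ does not control its $\End$-saturation.
\end{itemize}

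The missing idea is to change the ambient group, not $\Lambda$. As in the paper, let $\calG\subseteq\calA^{r+1}_S$ be the abelian subscheme generated by $\calZ:=[m]j(\calX)$. Here $m$ kills the component group of the smallest algebraic subgroup containing $j(\calX_{\bar\eta})$, after a finite base change and shrinking $S$. Then $\calZ$ is not contained in any strict group subscheme of $\calG$ by construction, and you apply \Cref{conj:Pink} in $\calG$.

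Transversality is then used for a different purpose. For $N\ge1$ and $u\in\ZZ^r$, the homomorphism $F_{N,u}:\calG\to\calA$, $(z_0,\dots,z_r)\mapsto[N]z_0-\sum_i[u_i]z_i$, is surjective. Indeed, suppose its image $H:=\mathrm{im}\,F_{N,u,\bar\eta}$ were strict. Choose $v$ with $[N]v=\sum_iu_iP_i(\bar\eta)$. Then $[mN](x-v)\in H$ for all $x\in\calX_{\bar\eta}$. By irreducibility, $\calX_{\bar\eta}\subseteq v+\zeta+H$ for some torsion point $\zeta$. Since $v+\zeta\in\Lambda_{\bar\eta}^{\mathrm{div}}$, this contradicts \eqref{cond_transverse}.

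Hence $\ker F_{N,u}$ has relative dimension $\rdim\calG-g$. This is less than $\rdim\calG-\dim\calZ$ exactly when $\dim\calX<g$, whatever $\rdim\calG$ is. Every $x\in\calM(\calX,\Lambda)$ gives $[m]j(x)\in\ker F_{N,u}$ for some $(N,u)$, so Pink in $\calG$ concludes. This uses only integer coefficients and the original $\Lambda$, so the endomorphism-coefficient problems you ran into never arise. In the example above, $\calG\cong\mathcal{E}^2$ and the kernels are finite, so Pink reduces to a relative Manin--Mumford statement.
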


\begin{proof}
Choose generators $P_1,\ldots,P_r$ of $\Lambda$ and form the graph
\[
j:\calX\rightarrow\calA_S^{r+1},\qquad
x\mapsto(x,P_1(\pi x),\ldots,P_r(\pi x)).
\]
Let $G_{\bar\eta}$ be the smallest algebraic subgroup containing
$j(\calX_{\bar\eta})$. Multiplying by an integer $m\geq1$ that
kills its component group, and making a finite base change and
shrinking $S$ if necessary, we obtain an abelian subscheme
$\calG\subseteq\calA_S^{r+1}$ and
$\calZ:=[m]j(\calX)\subseteq\calG$ whose geometric generic fiber
generates $\calG_{\bar\eta}$. The map $\calX\to\calZ$ is finite.
These reductions preserve the hypotheses, and nondensity
descends to the original base.

For $N\geq1$ and $u=(u_1,\ldots,u_r)\in\ZZ^r$, consider
\[
F_{N,u}:\calG\rightarrow\calA,\qquad
(z_0,\ldots,z_r)\mapsto[N]z_0-\sum_{i=1}^r[u_i]z_i.
\]
We claim that $F_{N,u}$ is surjective. Put
$H:=\operatorname{im}(F_{N,u,\bar\eta})$ and choose
$v\in\Lambda_{\bar\eta}^{\mathrm{div}}$ such that
\[
[N]v=\sum_{i=1}^r[u_i]P_i(\bar\eta).
\]
For every $x\in\calX_{\bar\eta}$, evaluation at $[m]j(x)$ gives
$[mN](x-v)\in H$. Consequently,
\[
\calX_{\bar\eta}\subseteq v+[mN]^{-1}H
=\bigcup_{\zeta\in\calA_{\bar\eta}[mN]}(v+\zeta+H).
\]
Geometric irreducibility puts $\calX_{\bar\eta}$ in one of these
translates. Since $v+\zeta\in\Lambda_{\bar\eta}^{\mathrm{div}}$,
condition~\eqref{cond_transverse} forces $H=\calA_{\bar\eta}$,
proving the claim.

The full kernels of the $F_{N,u}$ are therefore flat subgroup
schemes of relative codimension $g$. Every
$x\in\calM(\calX,\Lambda)$ gives a point $[m]j(x)$ in one of
these kernels. Since $\dim\calZ=\dim\calX<g$ and $\calZ$ is
not contained in a strict subgroup scheme of $\calG$, Pink's
\Cref{conj:Pink} gives nondensity of these points in $\calZ$.
The finite map $\calX\to\calZ$ then gives the assertion.
\end{proof}

\begin{defn}
    The abelian scheme $\calA/S$, or its generic fiber $\calA_\eta$, is said to have \emph{maximal variation} if the induced period map $S\rightarrow \AAA_g$ to the moduli space of abelian varieties is generically finite. 
\end{defn}

The following consequence of \Cref{thm:main} establishes
\Cref{conj:rml-sp} for section images in powers of an abelian
scheme under suitable variation hypotheses.

\begin{prop}\label{cor:rml-section-images}
Let $d:=\dim S>0$, and let $\calA/S$ be a polarized abelian
scheme of relative dimension $g$, with generic fiber $A$ over
$F:=K(S)$. Assume that every nontrivial simple abelian subvariety
of $A_{\bar F}$ has maximal variation. 

Let $\calB:=\calA_S^n$ for some $n\geq1$, let $\calX:=Q(S)$ for
$Q\in\calB(S)$, and let $\Lambda\subseteq\calB(S)$ be a
finitely generated subgroup. If $d<ng$ and
condition~\eqref{cond_transverse} holds for
$(\calB,\calX,\Lambda)$, then $\calM(\calX,\Lambda)$ is not
Zariski dense in $\calX$.
\end{prop}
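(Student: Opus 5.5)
The plan is to reduce to \Cref{thm:main} by encoding the generators of $\Lambda$ as extra coordinates, exactly as in the proof of \Cref{prop:pink-implies-rml}, and then to verify the required nondegeneracy with Gao's criterion, \Cref{thm:Gao_criterion}. Choose generators $P_1,\ldots,P_r$ of $\Lambda$ and consider the section $j=(Q,P_1,\ldots,P_r)$ of $\calB^{r+1}_S=\calA_S^{n(r+1)}$. After a finite étale base change and shrinking $S$, I will pass to the following objects:
\begin{itemize}
\item a symplectic level-$3$-structure on $\calA/S$, which is harmless by the reductions preceding \Cref{lem:modularFinitenessReduction};
\item the smallest abelian subscheme $\calG\subseteq\calB_S^{r+1}$ such that $\calZ:=[m]j(S)\subseteq\calG$, where $m$ is chosen to kill the component group.
\end{itemize}
Then $\calZ$ is an irreducible subvariety of dimension $d>0$ that dominates $S$ and is not contained in any strict group subscheme of $\calG$. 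The surjectivity argument in the proof of \Cref{prop:pink-implies-rml} uses only condition~\eqref{cond_transverse}. It shows that for every $N\ge1$ and $u\in\ZZ^r$ the map $F_{N,u}:\calG\to\calB$ is surjective. Hence $\ker F_{N,u}$ is a flat subgroup scheme of relative dimension $g_\calG-ng$, where $g_\calG:=\rdim\calG$. Every point of $\calM(\calX,\Lambda)$ is sent by $[m]j$ into one of these kernels.

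Set $t:=g_\calG-ng+1\ge1$. Then $[m]j(\calM(\calX,\Lambda))\subseteq\calZ\cap\calG_{<t}$. By \Cref{thm:main}, it therefore suffices to show that $\calZ$ is $t$-nondegenerate. The map $\calX\to\calZ$ is finite, so nondensity then descends to $\calX$ and to the original base.

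To check nondegeneracy, I would apply \Cref{thm:Gao_criterion} and show that condition (2) fails for every abelian subscheme $\calC\subseteq\calG$, say of relative dimension $g'$. There are two cases.
\begin{itemize}
\item If $\calC=\calG$, the quotient is trivial and $\dim\varphi_{/\calC}(\calZ)=0$. We have $\max\{0,g'-t\}=ng-1$, so we need $0\ge d-(ng-1)$. This is exactly the hypothesis $d<ng$.
\item If $\calC\subsetneq\calG$, the quotient $\calG/\calC$ is nontrivial. The required lower bound is $d-\max\{0,g'-t\}\le d$, so it suffices to show $\dim\varphi_{/\calC}(\calZ)\ge d$. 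Since $\calZ$ is the image of a section, $\varphi_{/\calC}(\calZ)$ surjects onto the image of $S$ under the period map of $\calG/\calC$. By Poincaré reducibility, $(\calG/\calC)_{\bar\eta}$ is isogenous to a product of simple abelian subvarieties of $A_{\bar F}$, because $\calG_{\bar\eta}\subseteq A_{\bar F}^{n(r+1)}$. By hypothesis each such factor $E$ has maximal variation. Composing the period map of $\calG/\calC$ with projection to the moduli of one nontrivial factor $E$ gives a generically finite map on $S$, so this image already has dimension $d$. Hence $\dim\varphi_{/\calC}(\calZ)=d$.
\end{itemize}
In both cases the inequality in (2) fails, so $\calZ\neq\calZ^{\deg}(t)$.

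I expect the main obstacle to be the step in the second case. One must make precise that maximal variation, which is an isogeny-invariant property of simple factors, controls the dimension of the image of $S$ in the moduli space attached to the quotient $\calG/\calC$. In particular the projection to one simple factor $E$ must really be generically finite on $S$, and only after the finite base change and level structure are in place. I would handle this by passing to an isogenous splitting of $\calG/\calC$ over a finite étale cover, which \Cref{thm:main} tolerates by the reduction observations (1) and (2). I would then use the fact that $\varphi_{/\calC}(\calZ)$ does not depend on the chosen universal family, as noted in \Cref{thm:Gao_criterion}.
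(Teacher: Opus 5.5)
Your proposal is correct and follows the paper's proof essentially step for step. It uses the same auxiliary section $[m](Q,P_1,\ldots,P_r)$ generating the abelian subscheme $\calG$, the same choice $t=\rdim\calG-ng+1$, and the same two-case check of Gao's criterion: the zero quotient is excluded by $d<ng$, and positive-dimensional quotients are excluded by generic finiteness of their moduli maps. The step you flag as the main obstacle is asserted in the paper in one line: every geometric simple isogeny factor of the quotient is isogenous to an abelian subvariety of $A_{\bar F}$, so the quotient's moduli map is generically finite. Your plan to pass to an isogenous splitting, or equivalently to use that isotriviality along a subvariety of $S$ is isogeny-invariant, is a sound way to fill in that detail.
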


\begin{proof}
Choose generators $P_1,\ldots,P_r$ of $\Lambda$. Similar to proof of \Cref{prop:pink-implies-rml}, after shrinking
$S$ and making a finite \'etale base change, we may choose an
integer $m\geq1$ and an abelian subscheme
$\calG\subseteq\calA_S^{r+1}$ such that
\[
R:=[m](Q,P_1,\ldots,P_r)\in\calG(S)
\]
and $R(\bar\eta)$ generates $\calG_{\bar\eta}$ as an algebraic
group. Put
$\calZ:=R(S)$ and $h:=\rdim\calG$.

For $N\geq1$ and $u=(u_1,\ldots,u_r)\in\ZZ^r$, define
\[
F_{N,u}:\calG\rightarrow\calA,\qquad
(z_0,\ldots,z_r)\mapsto[N]z_0-\sum_{i=1}^r[u_i]z_i.
\]
The surjectivity argument in the proof of
\Cref{prop:pink-implies-rml}, applied to this setting,
shows that every $F_{N,u}$ is surjective. Hence $h\geq ng$,
and its full kernel is a flat subgroup scheme of relative
dimension $h-ng$. Put
\[
t:=h-ng+1>0.
\]
Whenever $Q(s)\in\Lambda_s^{\mathrm{div}}$, the point $R(s)$
lies in one of these kernels. Therefore
\[
\{R(s):s\in S(\bar K),\ Q(s)\in\Lambda_s^{\mathrm{div}}\}
\subseteq\calZ\cap\calG_{<t}.
\]

We verify that $\calZ$ is $t$-nondegenerate using
\Cref{thm:Gao_criterion}, after the usual finite level and
isogeny reductions. Let $\calD$ be a quotient of $\calG$ of
positive relative dimension. Since
$\calG\subseteq\calA_S^{n(r+1)}$, every geometric simple
isogeny factor of $\calD$ is isogenous to an abelian subvariety
of $A_{\bar F}$. It follows that the moduli map of $\calD$ is generically finite.
As $\calZ$ is a section, its image under such a quotient
followed by the mixed period map has dimension
$d=\dim\calZ$. Thus no positive-dimensional quotient satisfies
the strict inequality in \Cref{thm:Gao_criterion}. For the
zero quotient, that inequality would read
\[
0<d-\max\{0,h-t\}=d-ng+1,
\]
which is impossible because $d<ng$.

By \Cref{thm:main}, $\calZ\cap\calG_{<t}$ is not Zariski dense
in $\calZ$. The isomorphism $\calX\rightarrow\calZ$ given
by $Q(s)\mapsto R(s)$ proves the assertion.
\end{proof}

% For example, suppose that $Q_1,\ldots,Q_n,P_1,\ldots,P_r\in\calA(S)$ are independent modulo torsion. Put $\Gamma:=\sum_i\mathbb ZP_i$, $\Lambda:=\Gamma^n\subseteq\calA^n_S(S)$, and $\calX:=(Q_1,\ldots,Q_n)(S)$. Then $b_\Lambda(\calX)=ng$: containment in a permitted translate of a proper abelian subvariety would, after applying a nonzero integer row vector and clearing denominators, give a nontrivial relation among the $Q_j$ and $P_i$. Therefore, if $d<ng$, the set
% \[
% \{s\in S(\bar K):Q_j(s)\in\Gamma_s^{\rm div} \text{ for every }1\leq j\leq n\}
% \]
% is not Zariski dense in $S$, with no restriction on $r$.

\subsection{Specialization of section groups}
Our second application concerns, in the same setup as above, the injectivity of specialization of $\Lambda\subseteq\calA(S)$.
Define
\[
\Sigma(\Lambda):=\{s\in S(\overbar K)\mid \spec_s:\Lambda \rightarrow \calA_s \text{ is not injective}.\}
\]

\begin{prop}
     If all nontrivial simple abelian subvarieties of $A_{\overbar{F}}$ have maximal variation and dimension $>\dim S$, then $\Sigma(\Lambda)$ is not Zariski dense in $S$.
\end{prop}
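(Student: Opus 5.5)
We reduce the statement to \Cref{thm:main}, encoding the generators of $\Lambda$ as the coordinates of a single section, as in the proof of \Cref{cor:rml-section-images}. After shrinking $S$ and making a finite \'etale base change, which changes $\Sigma(\Lambda)$ only by a finite-to-one map and a proper closed subset, we may assume that $\Lambda$ is generated by $P_1,\ldots,P_r\in\calA(S)$. We may also assume that they are linearly independent over $\ZZ$: the torsion of $\Lambda$ is finite, and after shrinking $S$ a torsion section specializes injectively, so torsion only affects a proper closed set. Consider the section $P:=(P_1,\ldots,P_r)\in\calA_S^r(S)$. Let $\calG\subseteq\calA_S^r$ be the abelian subscheme generated (after multiplying by some $m$ and further base change) by $R:=[m]P$, and put $\calZ:=R(S)$ and $h:=\rdim\calG$.

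The key observation is the following. If $\spec_s$ is not injective, then some nonzero $u\in\ZZ^r$ satisfies $\sum u_iP_i(s)=0$. Hence $R(s)$ lies in the kernel of $F_u:\calG\to\calA$, $(z_i)\mapsto\sum[u_i]z_i$. As in the proof of \Cref{prop:pink-implies-rml}, $F_u$ is surjective: its generic image $H$ satisfies $\sum u_iP_i(\bar\eta)\in[m]^{-1}H$ up to torsion, and since $\sum u_iP_i(\bar\eta)$ is non-torsion, the abelian subvariety it generates is nonzero. Here I would need a careful argument that $H$ is in fact everything. This is not automatic, so I would instead simply use that $\ker F_u$ has relative dimension $h-\rdim(\image F_u)\le h-1$. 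Thus $\calZ\cap\{s\in\Sigma(\Lambda)\}\subseteq\calZ\cap\calG_{<h}$, i.e.\ we take $t:=h$.

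It remains to check that $\calZ$ is $h$-nondegenerate, using \Cref{thm:Gao_criterion}; this is the main step. Since $\calZ$ is a section generating $\calG$, it is not contained in a strict group subscheme. Take an abelian subscheme $\calB\subseteq\calG$ of relative dimension $g'$ with quotient $\calD=\calG/\calB$ of relative dimension $h-g'$.
\begin{itemize}
\item If $\calD\neq0$, every simple factor of $\calD_{\bar\eta}$ is isogenous to a simple abelian subvariety of $A_{\bar F}$. So the moduli map of $\calD$ is generically finite, and $\dim\varphi_{/\calB}(\calZ)=d:=\dim S$. The criterion then requires $d<d-\max\{0,g'-h\}=d$, which fails.
\item If $\calD=0$, then $g'=h$ and the criterion reads $0<d-\max\{0,h-h\}=d$, which holds.
\end{itemize}
So the zero quotient is problematic, and $t=h$ is too large.

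To fix this I would use the dimension hypothesis. Replace $F_u$ by the subgroup $\ker F_u$, whose relative dimension is $h-\dim(\image F_u)$. The image is a nonzero abelian subvariety of $\calA$, so it contains a simple factor of dimension $>d$. Hence every such kernel has relative dimension $<h-d$, and we may take $t:=h-d$. Redoing the criterion with this $t$:
\begin{itemize}
\item For the zero quotient, the condition becomes $0<d-d=0$, which is false.
\item For a positive-dimensional quotient, the condition becomes $d<d-\max\{0,g'-h+d\}$, which is also false.
\end{itemize}
Hence $\calZ$ is $(h-d)$-nondegenerate. Then \Cref{thm:main} shows that $\calZ\cap\calG_{<h-d}$ is not Zariski dense in $\calZ\cong S$, and therefore neither is $\Sigma(\Lambda)$.

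The main obstacle is making the bound $\dim\image F_u>d$ precise. The image $\image F_{u,\bar\eta}$ is a nonzero abelian subvariety of $A_{\bar F}$, so it contains a simple subvariety, whose dimension exceeds $d$ by hypothesis. One also needs to verify the level and isogeny reductions needed for \Cref{thm:Gao_criterion}.
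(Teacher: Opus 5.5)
Once you discard the abandoned $t=h$ attempt, your argument is correct and is essentially the paper's proof. You set $t=h-d$, note that $\rdim\ker F_u<h-d$ because the nonzero image of $F_u$ contains a simple factor of dimension $>d$, and apply \Cref{thm:main} to $\calZ=R(S)\subseteq\calG$; here $t>0$ because $h\ge\dim\image F_u>d$, and the case $\dim S=0$ is trivial.

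The only difference is how $(h-d)$-nondegeneracy is obtained. You check it directly via \Cref{thm:Gao_criterion}, as the paper does in \Cref{cor:rml-section-images}, whereas for this statement the paper cites the nondegeneracy argument of \cite[Thm.~9.4]{Ge_IntersectI}.
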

The maximal variation assumption on the simple components is equivalent to $A$ having no fixed part if $\dim S=1$.  The dimension assumption ensures the intersection is unlikely. 

\begin{proof}
Put $d:=\dim S$. The case $d=0$ is immediate. Since torsion
sections specialize injectively, we may assume that
$r:=\operatorname{rank}\Lambda>0$.

As in the proof of \cite[Thm.~9.4]{Ge_IntersectI}, after shrinking
$S$, making a finite base change, and taking suitable multiples
of a basis of $\Lambda$ modulo torsion, we obtain a section
$R=(P_1,\ldots,P_r)$ whose geometric generic point generates
an abelian subscheme $\calG\subseteq\calA_S^r$.
Put $\calZ:=R(S)$, $h:=\rdim\calG$, and $t:=h-d>0$.
The nondegeneracy argument there shows that $\calZ$ is
$t$-nondegenerate.

For $0\ne u\in\ZZ^r$, the homomorphism
\[
f_u:\calG\rightarrow\calA,\qquad
(z_1,\ldots,z_r)\mapsto\sum_{i=1}^r[u_i]z_i
\]
is nonzero by the independence of the $P_i$.
Its image has relative dimension greater than $d$ by
hypothesis, so its full kernel satisfies
\[
\rdim\ker f_u<h-d=t.
\]
If specialization fails to be injective at $s$, clearing
denominators and killing torsion gives
$R(s)\in\ker f_u$ for some $u\ne0$. Hence
\[
R(\Sigma(\Lambda))\subseteq\calZ\cap\calG_{<t}.
\]
By \Cref{thm:main}, the right-hand side is not Zariski dense
in $\calZ$. Since $R:S\to\calZ$ is an isomorphism, the
result follows. 
\end{proof}

%\appendix    

\bibliographystyle{abbrv}
\bibliography{arithmetic}

\end{document}